%% file: MainQM-II.tex
\documentclass[12pt,reqno]{amsart}

 \newif\ifdraft
 \drafttrue            

\ifdraft
\usepackage[article]{MetaPackage}
\else
\usepackage{MetaPackage}
\fi

\newcommand{\Cexp}{C_{\mrm{exp}}}

\DeclareFunSpace{QCB}{\mathrm{QCB}}
\DeclareFunSpace{QCBt}{\lift{\mathrm{QCB}}}

\title[Quasimorphisms on Higher Rank Lattices]{Thermodynamic Formalism for Quasimorphisms: Lattices in Higher Rank Semisimple Lie Groups}

\author{Pablo D. Carrasco}
\address{ICEx-UFMG, Avda. Presidente Antonio Carlos 6627, Belo Horizonte-MG, BR31270-90}
\email{pdcarrasco@gmail.br}

\author{Federico Rodriguez-Hertz}
\address{Penn State, 227 McAllister Building, University Park, State College, PA16802}
\email{hertz@math.psu.edu}

 \date{\today}

\subjclass[2020]{Primary 37D35; Secondary 22E40, 37A17}
\keywords{quasimorphisms, bounded cohomology, lattices in semisimple Lie groups, thermodynamic formalism, homogeneus dynamics}
\begin{document}

\begin{abstract}
We give a new proof, based on thermodynamic formalism, of a foundational result of Burger and Monod in bounded cohomology. Let $G$ be a noncompact connected semisimple real Lie group with finite center and no factors of real rank one, and let $\Gamma<G$ be a uniform lattice. We prove that, for every orthogonal representation $\pi:\Gamma\to\On_N$, every $\pi$-quasimorphism $L:\Gamma\to\R^N$ is bounded.
\end{abstract}

\maketitle

\input{1IntroductionNEW}

\input{2QuasiCocycleQuasiMorphism}

\input{3EqStCenIso}

\input{4ProofthmNEW}


\printbibliography[title={References}]
\end{document}

%% file: 1IntroductionNEW.tex

\section{Introduction}\label{sec:introduction}

Bounded cohomology provides a natural framework for the following natural rigidity question. Given a map satisfying the cocycle identity up to a uniformly bounded error, must it lie at bounded distance from a genuine cocycle? Let $\Gamma$ be a discrete group and let $\pi:\Gamma\to\On_N$ be an orthogonal representation. The inclusion of the complex of bounded cochains into the ordinary cochain complex induces comparison maps
\[
    c_n:H^n_{\mrm b,\pi}(\Gamma,\R^N)\longrightarrow H^n_{\pi}(\Gamma,\R^N);
\]
$\Ker c_n$ measures the additional information carried by $H^n_{\mrm b,\pi}$ compared to its algebraic counterpart $H^n_{\pi}$. The first potentially nontrivial instance occurs in degree two; this is the most interesting and studied case, and it is the one that we are focusing on here.

The kernel of $c_2$ can be described in terms of approximate $1$-cocycles. A map $L:\Gamma\to\R^N$ is called a $\pi$-quasimorphism if
\[
    \norm{\delta_\pi L} \defeq  \sup_{\gamma,\eta\in\Gamma} |L(\gamma\eta)-L(\gamma)-\pi(\gamma)L(\eta)|<\oo.
\]
The bounded cocycle $\delta_\pi L$ determines a class in $\Ker c_2$, and every class in $\Ker c_2$ arises in this way. It follows that $c_2$ is injective if and only if every $\pi$-quasimorphism differs by a bounded map from a genuine $\pi$-cocycle.

The behavior of this kernel sharply distinguishes familiar classes of groups. If $\Gamma$ is amenable, then $H^2_{\mrm b,\pi}(\Gamma,\R^N)=0$, and thus $\Ker c_2=0$.\footnote{More generally, the bounded cohomology of an amenable group vanishes in every positive degree with coefficients in any dual Banach module \cite{JohnsonBanach}.} By contrast, if $\Gamma$ is a non-elementary Gromov-hyperbolic group, then $\Ker c_2$ is infinite-dimensional for the trivial representation. This phenomenon originates in the counting quasimorphisms introduced by Brooks and in the work of Brooks--Series on surface groups, and was subsequently developed in much greater generality \cite{Brooksbounded,BrooksSeries84,Barge1988,Epstein1997,Fujiwara1998,Bestvina2002,Hamenstaedt2008,Bestvina_2016}. In recent work, we used thermodynamic formalism to completely describe this kernel and obtain precise statistical information on quasimorphisms of negatively curved groups \cite{TermoEqSt}.

The picture is rigid for higher-rank lattices. Burger and Monod proved that the comparison map in degree two is injective for irreducible lattices in higher-rank semisimple groups, including nontrivial unitary coefficients, subject to the appropriate nondegeneracy condition when rank-one simple factors are present \cite{Burger1999,Burger2002}. The purpose of the present paper is to give a new proof, based on thermodynamic formalism and homogeneous dynamics, of the following finite-dimensional version.

\begin{definition}
    A lattice $\Gamma<G$ has no factors of real rank one if, for every nontrivial connected normal quotient $q:G\longrightarrow Q$ such that $Q$ has no compact factors and $\rank_{\R}Q=1$, the subgroup $q(\Gamma)$ is dense in $Q$.
\end{definition}

\begin{maintheorem}\label{thm:mainA}
Let $G$ be a noncompact connected semisimple real Lie group with finite center, and let $\Gamma<G$ be a uniform lattice with no factors of real rank one. For every finite dimensional orthogonal representation $\pi:\Gamma\to\On_N$, every $\pi$-quasimorphism $L:\Gamma\to\R^N$ is bounded.
\end{maintheorem}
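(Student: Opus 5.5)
We plan to prove the Main Theorem by turning a $\pi$-quasimorphism into a quasi-cocycle over the Weyl chamber flow on $X=\Gamma\backslash G$, and then showing with equilibrium states and centralizer isometries that its growth is trivial. As the section titles indicate, the first step is the construction in Section 2. Choose a smooth fundamental-domain partition of unity on $G$ to get a measurable (indeed Hölder along orbits) map $\Gamma\times\cdots$. Concretely, we build a map $\Phi:\R^r\times X\to\R^N$, where $\R^r=\mathfrak a$ is a Cartan subspace acting by $a\cdot x=x\exp(a)$. This map is to be a twisted quasi-cocycle:
\[
\Phi(a+b,x)=\Phi(a,x)+\rho(x,a)\Phi(b,x\exp a)+O(1),
\]
where $\rho$ is the orthogonal cocycle induced by $\pi$ on the flat bundle $\Gamma\backslash(G\times\R^N)$. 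The construction uses that $L(\gamma)$ is comparable, up to $O(1)$, to $\Phi$ evaluated along a path in $X$ that lifts to a segment from $g$ to $\gamma g$. By cocompactness, $L$ bounded is equivalent to $\Phi$ bounded. Since every element of $G$ is a product of boundedly many elements of $K\exp(\mathfrak a)K$ with $K$ compact, it then suffices to bound $\Phi$ on $\mathfrak a$-orbits.

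The second step is regularization. For each regular $a$ in the open Weyl chamber, the flow $\phi^a_t$ is partially hyperbolic, with center the $MA$-orbit directions. Averaging $\Phi(ta,\cdot)/t$ shows that the homogenization $\bar\Phi(a)=\lim \Phi(na,x)/n$ exists. Taken in the twisted sense, via the projection onto $\rho$-invariant vectors after passing to the compact group closure $\overline{\pi(\Gamma)}$, this limit exists uniformly. It is a linear function of $a$, modulo bounded error, up to cohomology. Using Hölder regularity along stable and unstable leaves, we replace $\Phi(a,\cdot)$ by a cohomologous Hölder potential $\varphi_a$. Thermodynamic formalism for this potential gives a unique equilibrium state $\mu_a$ for $t\varphi_a$; this is the content of Section 3 on equilibrium states and centralizer isometries. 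Moreover $\mu_a$ is invariant under the full centralizer $C_G(a)\supset MA$, because these elements commute with the flow and preserve the Hölder structure.

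The third and key step is to exploit the higher rank. Take a singular direction $b\in\mathfrak a$ lying on a wall. Its centralizer $C_G(b)$ contains a noncompact semisimple Levi part, and this part acts transitively enough to force $\mu_a$ to be invariant under a root subgroup $U_\alpha$. The input here is the hypothesis of no rank one factors: by Moore ergodicity, the $U_\alpha$ for varying $\alpha$ generate $G$ on each factor, and density of $\Gamma$ in rank one quotients handles the remaining factors. It follows that $\mu_a$ is Haar measure for every $a$ and every $t$. The pressure function $t\mapsto P(t\varphi_a)$ is then affine with slope $\int\varphi_a\,d\mathrm{Haar}=0$, by the usual antisymmetry of the twisted quasi-cocycle under $a\mapsto -a$ combined with invariance of Haar measure. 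A pressure with vanishing second derivative forces the variance of $\varphi_a$ to vanish, so $\varphi_a$ is a coboundary; transferring back, $\Phi(a,\cdot)$ is bounded. Uniformity in $a$ over a compact set of regular directions, together with the quasi-cocycle identity, extends this bound to all of $\mathfrak a$ and hence to $L$.

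The main obstacle is the third step: showing that the equilibrium states are invariant under the singular centralizers, hence Haar. The first approach to try is a Ledrappier-type invariance principle along center leaves, combined with uniqueness of the equilibrium state, which forces each centralizer element $c$ to satisfy $c_*\mu_a=\mu_a$. What remains after that is to control the nonabelian $C_G(b)$-action.
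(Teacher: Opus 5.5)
Your third step has a genuine gap, and it is where the content of the theorem lies. For a regular $a$, the centralizer of $\exp(a)$ is $MA$, a compact group times the split torus, just as in rank one. These are the only symmetries commuting with $\phi^a$, so they are the only ones your construction of $\mu_a$ can see, and they supply no unipotent invariance to feed into a Ratner-type classification. Passing to a singular $b$ does not help. The root groups $U_{\pm\alpha}$ with $\alpha(b)=0$ lie in $C_G(b)$ but do not commute with $\exp(a)$, so nothing forces $\mu_a$ to be invariant under them. An invariance principle along center leaves only concerns the center of $\phi^a$, i.e.\ the $MA$-directions.

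As a sanity check, your first two steps go through essentially verbatim for a cocompact lattice in $\mathrm{SL}_2(\mathbb R)$, where the conclusion fails. So the proof must say exactly how rank $\geq 2$ produces invariance beyond $MA$, and the appeal to Moore ergodicity and density in rank-one quotients does not.

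The missing idea, which is how the paper argues, is to run the thermodynamic formalism for the singular element itself. Take $a=\exp H$ with $H$ on a wall of the closed Weyl chamber but with nonzero component in every irreducible block. This is where the rank hypothesis enters, and such $H$ still span $\mathfrak a$. Then:
\begin{enumerate}
\item right translation by $a$ on the compact extension $M'$ is still a center isometry, now with center containing $\mathfrak g_{\pm\beta}$ for $\beta(H)=0$;
\item its unstable foliation is minimal, by Moore--Mautner ergodicity of the horospherical group plus minimality of ergodic horospherical actions;
\item $C_G(a)$ contains a copy of $\mathrm{SL}_2(\mathbb R)$ with noncompact projection to every block.
\end{enumerate}
The Gibbs measure is built from quasiperiodic families, and maps commuting with $T_a$ carry these to quasiperiodic families. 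Hence the measure is invariant under $\overline{\langle A,\mathrm{SL}_2(\mathbb R)\rangle}^\circ$. Margulis--Tomanov/Mozes--Weiss, full support and irreducibility of the blocks then force it to be Haar.

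This is also why the paper builds its own formalism for Bowen quasicocycles over center isometries, with quasiperiodic specification replacing periodic points. Your replacement of $\Phi(a,\cdot)$ by a cohomologous H\"older potential, and the uniqueness of its equilibrium state, are unjustified for a mere quasicocycle over a partially hyperbolic flow. The paper instead uses directly that two Bowen quasicocycles with the same Gibbs measure and the same mean are cohomologous.

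Two further points need repair.

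\emph{The mean.} Its vanishing cannot come from antisymmetry: $I(-a)=-I(a)$ holds for every linear map. Without $\int\varphi_a\,d\mathrm{Haar}=0$, vanishing of $P''$ only gives $\Phi(na,\cdot)=nI(a)+O(1)$, which is unbounded. What works is Weyl invariance. Translations by the compact group $K$ change the quasicocycle by $O(1)$, so Haar invariance under $w\in N_K(A)$ gives $I(\mathrm{Ad}(w)a)=I(a)$. The paper obtains this via a right $K$-invariant fundamental domain and $I(a)=I(\kappa_A(a))$. A $W$-invariant linear functional on $\mathfrak a$ vanishes.

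\emph{The twist.} Projecting onto $\rho$-invariant vectors discards the non-invariant component of $L$, for which you give no bound, and the thermodynamic formalism needs scalar, untwisted potentials. The paper handles both on $M'=\Gamma'\backslash(G\times\overline{\pi(\Gamma)})$. There $S((x,A),(g,1))=A^{-1}L([xg])$ is an untwisted $\mathbb R^N$-valued quasicocycle for $G$, and one works with $\ell\circ S$ for linear functionals $\ell$. The final estimate is uniform in the fiber coordinate $z$ and yields $|(z^{-1}-\mathrm{Id})L(\gamma)|\leq 2C_2$. This bounds the component of $L$ orthogonal to the fixed vectors of $\overline{\pi(\Gamma)}$.
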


In particular, the comparison map $c_2$ is injective. The theorem also applies to genuine $\pi$-cocycles; in that case boundedness implies that
\[
    L(\gamma)=v-\pi(\gamma)v
\]
for some $v\in\R^N$. Thus, the vanishing of $H^1_\pi(\Gamma,\R^N)$ is recovered directly from our argument; no independent group-cohomological vanishing result is used in the proof.

Compact factors of $G$ are allowed, and $\Gamma$ need not be irreducible. For example, the theorem applies when
\[
    G=\prod_{i=1}^p G^{(i)},
    \qquad
    \Gamma=\prod_{i=1}^p\Gamma^{(i)},
\]
where each $\Gamma^{(i)}<G^{(i)}$ is an irreducible uniform lattice and
\[
    \rank_{\R}G^{(i)}\geq 2
    \qquad\text{for every }i.
\]

Let us locate \Cref{thm:mainA} more precisely within the existing literature. In the almost-simple higher-rank case, Burger--Monod prove injectivity of the comparison map for arbitrary unitary coefficients and without the cocompactness assumption \cite{Burger1999,Burger2002}. A central ingredient in their proof is $L^2$-induction, which embeds the second bounded cohomology of $\Gamma$ into the continuous bounded cohomology of the ambient group $G$, with coefficients in the induced Hilbert $G$-module $L^2\text{-}\operatorname{Ind}_\Gamma^G(\pi)$. This module is modeled on $\Lp{2}{\Gamma\backslash G,\mathcal H}$, where $\mathcal H$ is the coefficient space of $\pi$, and is infinite-dimensional even when $\mathcal H$ is finite-dimensional. Their result implies that every $\pi$-quasimorphism is the sum of a bounded map and a genuine $\pi$-cocycle. To conclude that the quasimorphism itself is bounded, one must additionally invoke the vanishing of $H^1_\pi$, which for unitary coefficients follows from property~$(T)$.

Similarly, Monod--Shalom identify degree-two bounded cohomology with the part carried by invariant vectors and prove that every rough affine action on a separable reflexive Banach space is a bounded perturbation of an actual action \cite[Theorem~6.3 and Corollary~6.6]{Monod2004}. Obtaining a bounded orbit for the resulting action again requires an independent first-cohomology rigidity result. Thus, together with the corresponding vanishing of $H^1_\pi$, these results contain the almost-simple case of \Cref{thm:mainA}. By contrast, our argument proves boundedness of the quasimorphism directly and consequently yields, within the finite-dimensional orthogonal coefficient class considered here, both the injectivity of $c_2$ and the vanishing of $H^1_\pi$, without invoking property~$(T)$ or any independent group-cohomological vanishing theorem.

The passage from irreducible blocks to the general statement also deserves some clarification. At the level of bounded cohomology, products are governed by the degree-two product formula of Burger--Monod and Monod, stated in \cite[Theorem~3.7]{Monod_2006}. This formula is not a formal Künneth theorem and depends essentially on the hypotheses imposed on the coefficient module. Our proof does not use this product formula. After passing to a finite-index subgroup that decomposes as a product of irreducible blocks, we restrict the quasimorphism to each block. The irreducible case makes each restriction bounded, and the quasicocycle identity then implies boundedness on the entire product. Boundedness subsequently passes from the finite-index subgroup to the original lattice. Thus, although the reducible conclusion can also be recovered from the existing bounded-cohomological theory, its reduction to the irreducible case is direct in our approach.

The works of Burger--Monod and Monod--Shalom discussed above are foundational in the bounded-cohomological theory of higher-rank lattices, and the conclusion of \Cref{thm:mainA} represents only one consequence of their much broader frameworks. The novelty of the present work lies in the method and in the dynamical rigidity principle underlying the proof. Our guiding principle is that quasimorphisms, considered up to bounded error, can be encoded by invariant probability measures arising from natural dynamical systems associated with the group or with a related homogeneous space. More precisely, each quasimorphism gives rise to a canonical invariant measure, and two quasimorphisms with the same canonical measure and the same mean differ by a uniformly bounded map. Thus, the measure together with the mean determines the quasimorphism up to bounded error. In the rank-one setting studied in \cite{TermoEqSt}, the underlying dynamics is hyperbolic and supports a rich family of invariant measures. By contrast, in the higher-rank setting considered here, homogeneous measure rigidity severely restricts the measures that can arise and ultimately forces the associated measure to be Haar, leading to the boundedness conclusion.

In this article we develop a version of thermodynamic formalism for a class of quasicocycles over center isometries, which we call Bowen quasicocycles, and we show that every such quasicocycle determines a canonical Gibbs-type invariant measure. The quasimorphisms appearing in \Cref{thm:mainA} give rise to Bowen quasicocycles on a compact homogeneous extension of \(\Gamma\backslash G\). Thermodynamic uniqueness, combined with homogeneous measure rigidity, forces the corresponding canonical measures to be Haar and ultimately yields the boundedness of the original quasimorphisms.

\subsection{Outline of the proof}

Let $C=\overline{\pi(\Gamma)}<\On_N$ and consider
\[
    G'=G\times C,
    \qquad
    \Gamma'=\{(\gamma,\pi(\gamma)): \gamma\in\Gamma\},
    \qquad
    M'=\Gamma'\backslash G'.
\]
Since $\Gamma$ is uniform and $C$ is compact, $M'$ is compact. Starting from a $\pi$-quasimorphism $L$, we construct a quasicocycle $\ms S$ for the right action of $G'$ on $M'$. This is the coarse analogue of inducing a cocycle from the lattice to the ambient group, but the construction remains finite-dimensional and geometric.

We next restrict $\ms S$ to suitable semisimple elements $a\in G$. The elements we use lie on walls of Weyl chambers. Their right translations on $M'$ are center isometries: they are isometric along a center foliation and hyperbolic in the transverse directions. For every linear functional $\ell\in(\R^N)^*$, the scalar sequence obtained from $\ell\circ\ms S$ along the iterates of $a$ has well defined asymptotic control (a weak version of the so called Bowen property). To study this sequence, we develop a theory of thermodynamic formalism for center isometries which in particular, associates
to each such quasicocycle a distinguished invariant probability measure satisfying a Gibbs-type estimate.

The choice of $a$ on a Weyl chamber wall provides large noncompact subgroups in its centralizer. The Gibbs property, together with the higher-rank hypothesis, makes the distinguished measure invariant and ergodic under sufficiently many of these subgroups. A version of Ratner's measure classification theorem then forces it to be Haar measure on $M'$. Comparing each scalar restriction of $\ms S$ with the constant
additive cocycle given by its Haar mean, thermodynamic rigidity shows that their difference is uniformly bounded.

It remains to pass from the selected wall directions to all of $G$. The Cartan decomposition reduces this to an additive datum on a maximal split torus $A$. The preceding argument makes this datum invariant under the Weyl group. The Weyl reflections then force it to vanish, and the compact factors in the Cartan decomposition contribute only a uniform error. We conclude that $\ms S$ is uniformly bounded and, hence, that $L$ is bounded.

The same thermodynamic construction has a natural analogue for semisimple $p$-adic groups and their Bruhat--Tits buildings. We do not pursue that extension here.

\subsection{A higher-degree question}

The contrast between the hyperbolic and higher-rank cases is reminiscent of the philosophy introduced by Katok and Spatzier in their study of the first cohomology of higher-rank abelian hyperbolic actions \cite{Katok_1994}. For the higher-rank Anosov actions considered there, smooth cocycles can be represented by leafwise closed one-forms, and Liv\v{s}ic theory together with commutativity forces the first cohomology to reduce to the finite-dimensional space of constant cocycles. Katok and Katok later established related higher-degree results for suitable $\mathbb Z^r$-actions \cite{Katok_1995}. It is natural to ask whether some part of this dynamical philosophy extends to bounded cohomology in higher degree.

Any such question must take into account a restriction theorem of Monod. If $G$ is almost simple, $r=\rank_{\R}G$, and $\Gamma<G$ is a irreducible lattice, then the restriction map
\[
H^j_{\mrm{b}}(G,\R)
\longrightarrow
H^j_{\mrm b}(\Gamma,\R)
\]
is an isomorphism for $j<2r$ \cite[Corollary~4.8]{Monod2010}. Thus, throughout the range considered below, the question for the lattice is equivalent to the corresponding question for the ambient group. Moreover, Monod's Isomorphism Conjecture predicts that the comparison map
\[
H^j_{\mrm{cb}}(G,\R) \longrightarrow H^j_{\mrm c}(G,\R)
\]
is an isomorphism in every degree \cite[Problem A]{MonodICM_2007}. Since continuous cohomology is finite-dimensional, the following question is therefore a deliberately weaker form of that conjecture.

\begin{question}
Let $G$ be a noncompact connected almost-simple real Lie group with finite center, let $\Gamma<G$ be a lattice, and set $r=\rank_{\R}G$. Is
\[
\dim H^j_{\mrm b}(\Gamma,\R)<\oo
\]
for every $3\leq j\leq r$? Equivalently, is $H^j_{\mrm{cb}}(G,\R)$ finite-dimensional in this range?
\end{question}

A stronger question, consistent with the Isomorphism Conjecture, is whether
\[
H^j_{\mrm{cb}}(G,\R)={0}
\]
in those degrees $3\leq j\leq r$ for which $H^j_{\mrm c}(G,\R)={0}$. The qualification involving continuous cohomology
is essential. Groups of Hermitian type provide an evident obstruction through their bounded K\"ahler classes and cup products, but the absence of Hermitian type alone does not rule out nontrivial continuous cohomology in higher degree.

For $G=\SL_d(\R)$, $d\geq3$, the known low-degree results give
\[
H^j_{\mrm b}(\Gamma,\R)={0}
\qquad (j=1,2,3)
\]
for every lattice $\Gamma<\SL_d(\R)$. Indeed, degree one vanishes for every group, degree two follows from Burger--Monod, and degree three follows from
Monod's vanishing of $H^3_{\mrm{cb}}(\SL_d(\R),\R)$ together with the restriction theorem \cite{Burger2002,Monod2004,Monod2010}. The remaining degrees in the range of the question are not covered by these results and appear to be open.

%% file: 2QuasiCocycleQuasiMorphism.tex

\section{Quasicocycles and twisted quasimorphisms}\label{sec:quasicocyclequasimor}

We work in the hypotheses of \Cref{thm:mainA}: $G$ is a noncompact connected semisimple real Lie group with finite center, and let $\Gamma<G$ is a uniform lattice with no factors of real rank one. 

We first make some algebraic reductions in $G$, which will allow us to assume that $G$ has no compact factors, and  is adjoint. After that, we introduce a bundle extension of $\Gamma\backslash G$ that will be useful to control the action of $\pi$.

\subsection{Irreducible factors and reductions of the ambient group}

We first make two reductions concerning the global form of $G$. Let $G_{\mathrm c}\triangleleft G$ be the maximal connected compact normal
subgroup and set
\[
G_{\mathrm{nc}}=G/G_{\mathrm c}.
\]
Denote the quotient homomorphism by $q_{\mathrm{nc}}:G\longrightarrow G_{\mathrm{nc}}$. Then $G_{\mathrm{nc}}$ is semisimple with finite center and has no nontrivial connected compact normal subgroups (no compact almost-simple factors). Consider its adjoint quotient
\[
G_{\mathrm{ad}}
=G_{\mathrm{nc}}/Z(G_{\mathrm{nc}}),
\]
with quotient homomorphism $r:G_{\mathrm{nc}}\longrightarrow G_{\mathrm{ad}}$. Set
\[
q=r\circ q_{\mathrm{nc}}:G\longrightarrow G_{\mathrm{ad}}.
\]
Its kernel fits into the exact sequence
\[
1\longrightarrow G_{\mathrm c}
\longrightarrow\ker q
\longrightarrow Z(G_{\mathrm{nc}})
\longrightarrow1
\]
and in particular is compact. Therefore, $F:=\Gamma\cap\ker q$ is a finite normal subgroup of $\Gamma$, and
\[
\overline{\Gamma}:=q(\Gamma)<G_{\mathrm{ad}}
\]
is a uniform lattice.

Let $L:\Gamma\to\R^N$ be a $\pi$-quasimorphism. We claim that the finite kernel $F$ can be removed without affecting the
boundedness problem. Denote $V=\mathbb R^N$ and let
\[
P_F
=
\frac{1}{|F|}
\sum_{f\in F}\pi(f):V\longrightarrow V^F
\]
be the orthogonal projection onto the subspace of $F$-invariant vectors. Denote $C_F=\max_{f\in F}\abs{L(f)}$.

For $f\in F$ and $\gamma\in\Gamma$, comparison of the two decompositions
\[
f\gamma=\gamma(\gamma^{-1}f\gamma)
\]
and the fact that $\gamma^{-1}f\gamma\in F$ give
\[
\abs{(\pi(f)-\Id)L(\gamma)}\leq 2\norm{\delta_\pi L}+2C_F.
\]
Averaging over $f\in F$, we obtain
\[
\sup_{\gamma\in\Gamma}\abs{L(\gamma)-P_FL(\gamma)}<\oo.
\]

Since $F\triangleleft\Gamma$, the subspace $V^F$ is $\Gamma$-invariant and $P_F$ commutes with $\pi(\Gamma)$. The restriction of $\pi$ to $V^F$
therefore factors through an orthogonal representation 
\[
\overline{\pi}:\overline{\Gamma}\longrightarrow O(V^F).
\]
Choose a section
\[
s:\overline{\Gamma}\longrightarrow\Gamma
\]
of $q|_\Gamma$ and define
\[
\overline L(\overline\gamma)
=
P_FL(s(\overline\gamma)).
\]
Then $\overline L$ is an $\overline\pi$-quasimorphism. Indeed, if $\overline\gamma_1,\overline\gamma_2\in\overline\Gamma$, then
\[
s(\overline\gamma_1)s(\overline\gamma_2)=f\,s(\overline\gamma_1\overline\gamma_2)
\]
for some $f\in F$, and 
\[
\abs{\overline L(\overline\gamma_1\overline\gamma_2)-\overline L(\overline\gamma_1)
-\overline\pi(\overline\gamma_1)\overline L(\overline\gamma_2)
}
\leq
2\norm{\delta_{\pi}L}+C_F.
\]
Moreover, regarding $V^F$ as a subspace of $V$, we have
\[
\sup_{\gamma\in\Gamma}\abs{L(\gamma)-\overline L(q(\gamma))}<\infty.
\]
Thus it suffices to prove that $\overline L$ is bounded.


Relabelling the objects obtained above, we may assume without loss of generality that $G$ is adjoint. In particular, $G$ has no compact factors,
has trivial center, and is algebraic.

We next pass to an algebraically simply connected global form of $G$. Recall that a connected semisimple real algebraic group $\mathbf H$ is
said to be algebraically simply connected if every surjective algebraic homomorphism
\[
\mathbf H'\longrightarrow\mathbf H
\]
from a connected semisimple real algebraic group $\mathbf H'$, whose kernel is finite and contained in the center of $\mathbf H'$, is an
isomorphism.

Let $\mathbf G$ be the connected adjoint real algebraic group such that $G=\mathbf G(\mathbb R)^\circ$. Every connected semisimple algebraic group admits an algebraically simply connected covering
\[
\mathbf p:\widetilde{\mathbf G}\longrightarrow\mathbf G.
\]
which is unique up to isomorphism, where the homomorphism $\mathbf p$ is surjective, and whose differential at the identity is an isomorphism. Therefore $\ker\mathbf p$ is finite, and since it is a finite normal subgroup in the connected group $\widetilde{\mathbf G}$, it is contained in the center of $\widetilde{\mathbf G}$. See \cite{Milne_2017}, Chapter $18$.

Let $\widetilde G=\widetilde{\mathbf G}(\mathbb R)^\circ$. The restriction of $\mathbf p$ gives a surjective covering homomorphism
\[
p:\widetilde G\longrightarrow G
\]
with finite central kernel $Z = \ker p$. Indeed, its differential is an isomorphism, so its image is an open subgroup of the connected group $G$, and hence equals $G$.

Consider $\widehat\Gamma:=p^{-1}(\Gamma)<\widetilde G$. The covering map $p$ induces a homeomorphism
\[
\widehat\Gamma\backslash\widetilde G
\longrightarrow
\Gamma\backslash G,
\qquad
\widehat\Gamma\widetilde g
\longmapsto
\Gamma p(\widetilde g).
\]
and in particular $\widehat\Gamma$ is a uniform lattice in $\widetilde G$.

Notice that $\widehat\Gamma$ need not be torsion-free, even if $\Gamma$ is torsion-free, since it contains the finite group $Z$. Since $\widetilde G$
is linear and $\widehat\Gamma$ is finitely generated, Selberg's lemma provides a torsion-free finite-index subgroup
\[
\widetilde\Gamma<\widehat\Gamma.
\]
After replacing it by its normal core, we may assume that $\widetilde\Gamma$ is normal in $\widehat\Gamma$.

Since $Z$ is finite and $\widetilde\Gamma$ is torsion-free, $\widetilde\Gamma\cap Z=\{e\}$, and therefore the restriction of $p$ to $\widetilde\Gamma$ is injective. Setting
\[
\Gamma_0:=p(\widetilde\Gamma),
\]
we obtain a finite-index subgroup $\Gamma_0<\Gamma$ such that
\[
p|_{\widetilde\Gamma}:
\widetilde\Gamma\longrightarrow\Gamma_0
\]
is an isomorphism. Thus, after passing to a finite-index subgroup, the lattice $\Gamma$ lifts to the algebraically simply connected cover.

We transport the representation and the quasimorphism to $\widetilde\Gamma$ by setting
\[
\widetilde\pi(\widetilde\gamma)
=\pi\bigl(p(\widetilde\gamma)\bigr),
\qquad
\widetilde L(\widetilde\gamma)
=L\bigl(p(\widetilde\gamma)\bigr).
\]
Boundedness of $\widetilde L$ is equivalent to boundedness of $L|_{\Gamma_0}$. Since $\Gamma_0$ has finite index in $\Gamma$, boundedness
on $\Gamma_0$ implies boundedness on $\Gamma$.

Relabelling
\[
(\widetilde G,\widetilde\Gamma,\widetilde\pi,\widetilde L)
\]
as
\[
(G,\Gamma,\pi,L),
\]
we may therefore assume from now on that $G$ is algebraically simply connected, has no compact factors, and that $\Gamma<G$ is a torsion-free
uniform lattice.

The condition of having no factors of real rank one is preserved under the finite-index passages and finite central coverings used above.
Decompose the almost-simple factors of $G$ into maximal irreducible blocks
\[
G=G^{(1)}\times\cdots\times G^{(p)}
\]
with respect to $\Gamma$. Then $\Gamma$ has no factors of real rank one
if and only if
\[
\rank_{\mathbb R}G^{(i)}\geq2
\qquad
\text{for every }i=1,\ldots,p.
\]
Indeed, a block of real rank one gives a rank-one normal quotient on which the image of $\Gamma$ is discrete, whereas irreducibility implies
that the projection to every proper factor of a block is dense.

After passing to a finite-index subgroup and relabelling, we may moreover
assume that
\[
\Gamma=\Gamma^{(1)}\times\cdots\times\Gamma^{(p)},
\]
where each $\Gamma^{(i)}<G^{(i)}$ is an irreducible torsion-free uniform lattice.

\subsection{The bundle extension}

Let $M=\Gamma\backslash G$, and consider the natural right action $G\acts M$. For the representation $\pi:\Gamma\to \On_N$ we denote $\mc C=\cl{\Im\pi}$ and let $G'=G\times \mc C$. The group $\Gamma$ acts on the left on $G'$ by $\gamma\cdot(g,u)=(\gamma g,\pi(\gamma)u)$; alternatively, we consider 
\[
	\Gamma':=\{(\gamma,\pi(\gamma)):\ga\in\Gamma\}<G'
\]
with the usual left action, and denote $M'=\Gamma'\backslash G'$. Note that $p_M:M'\to M, p(\overline{(g,h)})=\overline{g}$ makes $M'$ a compact flat $\mc C$-bundle, and in particular $\Gamma'$ is a uniform lattice in $G'$, which does not have rank-one factors. Note also that the action $G\acts M$ extends naturally to an action $G'\acts M'$. 

It will be convenient to use the following equivalent description of the above action. Let $\Sigma \subset G$ be a fundamental domain for $M$, and $\Sigma' \subset G'$ a fundamental domain for $M'$. With no loss of generality, we assume that $\Sigma'=\Sigma\times\mc C$.

Define the integer part maps $[\cdot]:G\to \Gamma, [\cdot]:G'\to \Gamma'$ be such that
\begin{align*}
&[g]^{-1}g\in \Sigma\\
&[(g,u)]^{-1}(g,u)\in \Sigma'.
\end{align*}
Note that $[g,u]=([g],\pi([g]))$, and the abuse of notation of calling both maps the same is harmless.  The following is clear.

\begin{lemma}
	The functions $[\cdot]:G\to \Gamma, [\cdot]:G'\to \Gamma'$ are $\Gamma$- and $\Gamma'$-equivariant: for $\gamma\in \Gamma, g\in G, u\in\mc C$
	\begin{align*}
	&[\gamma g]=\gamma[g]\\
	&[(\gamma,\pi(\gamma))\cdot(g,u)]=(\gamma,\pi(\gamma))[g,u].
	\end{align*}
\end{lemma}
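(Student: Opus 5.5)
The plan is to argue directly from the defining property of the integer part map, using only that $\Sigma$ is a strict fundamental domain for the left action of $\Gamma$ on $G$. That is, every $g\in G$ can be written uniquely as $g=\gamma s$ with $\gamma\in\Gamma$ and $s\in\Sigma$. Uniqueness uses that $\Gamma$ is torsion-free, so it acts freely, and that $\Sigma$ contains exactly one point of each orbit. By definition $[g]$ is this $\gamma$.

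For the first identity, fix $\gamma\in\Gamma$ and $g\in G$. Write $g=[g]s$ with $s=[g]^{-1}g\in\Sigma$. Then $\gamma g=(\gamma[g])s$ with $\gamma[g]\in\Gamma$ and $s\in\Sigma$. Hence $(\gamma[g])^{-1}\gamma g=s\in\Sigma$. By uniqueness of the decomposition, $[\gamma g]=\gamma[g]$.

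For the second identity, I will first check the formula $[g,u]=([g],\pi([g]))$ already noted in the text. Since $\Sigma'=\Sigma\times\mc C$, we have
\[
([g],\pi([g]))^{-1}(g,u)=\bigl([g]^{-1}g,\ \pi([g])^{-1}u\bigr)\in\Sigma\times\mc C.
\]
This holds because $\pi([g])^{-1}u\in\mc C$, as $\mc C$ is a group containing $\Im\pi$. Moreover $\Gamma'$ acts freely on $G'$, since its projection to the first factor is injective. So $\Sigma'$ is a strict fundamental domain and the same uniqueness argument applies.

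Then
\[
[(\gamma,\pi(\gamma))\cdot(g,u)]=[\gamma g,\pi(\gamma)u]=\bigl([\gamma g],\pi([\gamma g])\bigr)=\bigl(\gamma[g],\pi(\gamma)\pi([g])\bigr)=(\gamma,\pi(\gamma))[g,u],
\]
using the first identity and the fact that $\pi$ is a homomorphism.

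The only point requiring care is the existence and uniqueness of the decomposition, i.e.\ that $\Sigma$ is a genuine set of orbit representatives rather than a closure of one. I would make this explicit by fixing $\Sigma$ as a measurable strict fundamental domain, for instance a Borel section of $G\to\Gamma\backslash G$. Everything else is formal.
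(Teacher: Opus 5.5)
Your argument is correct and is the routine verification the paper leaves implicit; the paper just declares the lemma clear. Uniqueness of the decomposition $g=[g]s$ with $s\in\Sigma$ gives $[\gamma g]=\gamma[g]$, and the second identity follows from $[g,u]=([g],\pi([g]))$ and multiplicativity of $\pi$. One minor correction: left multiplication by any subgroup is free by cancellation, so torsion-freeness of $\Gamma$ plays no role here. All that is needed is that $\Sigma$ is a strict set of orbit representatives, as you point out.
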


Using the map $[\cdot]$, we define a right action of $G'$ on $\Sigma'$ as follows. For $x'\in \Sigma'$ and $g'\in G'$, set
\[
x'*g':=[x'g']^{-1}x'g'.
\]

\begin{lemma}
	The above formula defines an action of $G'$ on $\Sigma'$. 
\end{lemma}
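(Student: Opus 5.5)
We have to check three things for the formula $x'*g'=[x'g']^{-1}x'g'$. First, it is well defined as a map $\Sigma'\times G'\to\Sigma'$: by the definition of the integer part map, $[x'g']^{-1}x'g'\in\Sigma'$. Second, the identity acts trivially. Third, $(x'*g')*h'=x'*(g'h')$. The tool for all of this is the equivariance lemma just stated. We also need the fact that points of $\Sigma'$ have trivial integer part. That is, for $y'\in\Sigma'$ we have $[y']=e$: the element $e\in\Gamma'$ satisfies $e^{-1}y'\in\Sigma'$, and uniqueness of the integer part (part of the requirement that $\Sigma'$ be a strict fundamental domain, i.e.\ each $\Gamma'$-orbit meets $\Sigma'$ exactly once) forces $[y']=e$.

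For the identity: $x'*e=[x']^{-1}x'=x'$ since $[x']=e$. For compatibility, set $\gamma_1'=[x'g']\in\Gamma'$, so that $x'*g'=\gamma_1'^{-1}x'g'$. Then
\[
(x'*g')*h'=\bigl[\gamma_1'^{-1}x'g'h'\bigr]^{-1}\gamma_1'^{-1}x'g'h'.
\]
By $\Gamma'$-equivariance, $[\gamma_1'^{-1}x'g'h']=\gamma_1'^{-1}[x'g'h']$. Hence
\[
(x'*g')*h'=[x'g'h']^{-1}\gamma_1'\gamma_1'^{-1}x'g'h'=[x'g'h']^{-1}x'g'h'=x'*(g'h'),
\]
which is the action axiom.

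The only point needing care is the implicit convention that $[\cdot]$ is uniquely defined. This holds because $\Sigma'=\Sigma\times\mc C$ is taken to be a strict (set-theoretic) fundamental domain, a Borel section of $G'\to M'$. Uniqueness of the integer part is also exactly what the equivariance lemma rests on. The paper's remark that $[g,u]=([g],\pi([g]))$ is consistent with this. Indeed, $([g],\pi([g]))^{-1}(g,u)=([g]^{-1}g,\pi([g])^{-1}u)\in\Sigma\times\mc C$, because $\pi([g])^{-1}u\in\mc C$.

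Finally, the action on $\Sigma'$ corresponds to the right action on $M'$ under the bijection $\Sigma'\to M'$. We have $\Gamma'(x'*g')=\Gamma'x'g'$, since the two differ by the left factor $[x'g']^{-1}\in\Gamma'$.
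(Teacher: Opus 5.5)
Your proof is correct and follows the paper's argument: both apply the $\Gamma'$-equivariance $[\gamma y]=\gamma[y]$ with $\gamma=[x'g']^{-1}$ and then cancel, though the paper first reduces to the $G$-action on $\Sigma$ and writes the computation as one chain of equalities. Your extra checks fill in details the paper leaves implicit: that $[y']=e$ for $y'\in\Sigma'$ (which gives the identity axiom), and that uniqueness of the integer part requires a strict fundamental domain.
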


\begin{proof}
	It is enough to show that for $x\in \Sigma, g\in G$, $x*g:=[xg]^{-1}xg$ defines a right action of $G$ on $\Sigma$. Note that $xg=[xg]x*g$, thus
\begin{align*}
	[x(g_1g_2)]x*(g_1g_2)&=x(g_1g_2)=(xg_1)g_2=([xg_1]x*g_1)g_2\\
	  &=[xg_1]((x*g_1)g_2)\\
    &=[xg_1][(x*g_1)g_2]\left((x*g_1)*g_2\right)\\
    &=[xg_1][[xg_1]^{-1}(xg_1)g_2]\left((x*g_1)*g_2\right)\\
    &=[xg_1g_2]\left((x*g_1)*g_2\right)\\
 	\end{align*}	
and $x*(g_1g_2)=(x*g_1)*g_2$.
\end{proof}

It follows in particular that $\Sigma'\times G'$ is a $G'-$principal bundle; note also that $G'$ acts (on the left) on $\R^N$, the action corresponding to the representation $\pi$. Now given $L:\Gamma\to \R^{N}$ a $\pi$-quasimorphism, we will show that it defines naturally a function on the vector bundle associated to $\pi$. 

Define a map $S:\Sigma'\times G'\to\R^N$ by 
\begin{align}\label{eq:associatedquasicocycle}
S((x,A),(g,B))=A^{-1}L([x g]).
\end{align}
We will show that $S$ is a quasicocycle, as per the next definition.

\begin{definition}\label{def:quasicocycle}
Let $G$ be a group acting on a space $E$, and let $\rho:G\to \GL_n(\R)$ be a representation. A map $S:E\times G\to \R^n$ is called a $\rho$-quasicocycle if
\[
  \sup_{g_i\in G, p\in E}|S(p,g_1g_2)-\rho(g_1)S(pg_1,g_2)-S(p,g_1)|<\oo.
\]

If $\rho$ is the trivial representation, we simply say that $S$ is a quasicocycle for the action.
\end{definition}

For $a,b\in\Gamma$ we write
\[
\delta_{\pi}(L)(a,b)=L(ab)-(\pi(a)L(b)+L(a))	
\]
and let 
\[
C(a):=\pi(a)L(a^{-1})+L(a)=L(e)-\delta_{\pi}L(a,a^{-1});	
\]
notice that $C(a)$ it is uniformly bounded. We now make the explicit computations: for $(x,A)\in \Sigma'$ and $(g_i,B_i)\in G'$, $i=1,2$, we have
\begin{align*}
S((x,A),(g_1g_2,B_1B_2))=A^{-1}L([xg_1g_2])\\
S((x,A),(g_1,B_1))=A^{-1}L([xg_1])
\end{align*}
and
\begin{align*}
&B_1 S\left((x,A)*(g_1,B_1), (g_2,B_2)\right)=B_1S\left([xg_1,AB_1]^{-1}(xg_1,AB_1),(g_2,B_2)\right)\\
&=B_1S\left([xg_1]^{-1}xg_1,\pi([xg_1])^{-1}AB_1,(g_2,B_2)\right)=A^{-1}\pi([xg_1])L\left([xg_1]^{-1}[xg_1g_2]\right)\\
&=A^{-1}\pi([xg_1])\left(L\left([xg_1]^{-1}[xg_1g_2]\right)-\pi([xg_1])^{-1}L([xg_1g_2])-L([xg_1]^{-1})\right)\\
&+A^{-1}\pi([xg_1])\left(\pi([xg_1])^{-1}L([xg_1g_2])+L([xg_1]^{-1})\right)\\
&=\underbrace{A^{-1}\pi([xg_1])\delta_{\pi}L([xg_1]^{-1},[xg_1g_2])}_{\text{bounded}:=K_1}+S((x,A),(g_1g_2,B_1B_2))\\
&+A^{-1}\underbrace{\pi([xg_1])L([xg_1]^{-1})}_{C([xg_1])-L([xg_1])}\\
&=K_1+S((x,A),(g_1g_2,B_1B_2))+\underbrace{A^{-1}C([xg_1])}_{\text{bounded}:=K_2}-S((x,A),(g_1,B_1)).
\end{align*}
To get the bounds $K_1, K_2$ we have used that $\pi$ is orthogonal, hence it preserves norms. Let $K:=K_1+K_2$. We have proven the following.

\begin{proposition}\label{pro:quasicoccycleG}
	There exists a constant $K>0$ such that for all $(x,A)\in\Sigma'$ and $(g_i,B_i)\in G'$, $i=1,2$,
\begin{align*}
\MoveEqLeft\left|S\left((x,A),(g_1g_2,B_1B_2)\right)\right.\\
&\left.-\rho\left((g_1,B_1)\right) S\left((x,A)*(g_1,B_1),(g_2,B_2)\right)-S\left((x,A),(g_1,B_1)\right)\right|\leq K
\end{align*}
where $\rho\left((g,B)\right)=B^{-1}$. That is, $S:\Sigma'\times G'\to\R^N$ is a $\rho-$quasicocycle.

In particular, $S$ is a quasicocycle for the action of $G=G\times\{1\}<G'$ on $\Sigma'\times G'$.
\end{proposition}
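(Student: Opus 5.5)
The proof will be a direct computation from the definition \eqref{eq:associatedquasicocycle}, using the equivariance of the integer part map $[\cdot]$ and the $\pi$-quasimorphism inequality for $L$. Fix $(x,A)\in\Sigma'$ and $(g_i,B_i)\in G'$, and write $\gamma_1=[xg_1]$ and $\gamma_{12}=[xg_1g_2]$.

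\textbf{Step 1: identify the fiber coordinate of $(x,A)*(g_1,B_1)$.} Since $[(xg_1,AB_1)]=(\gamma_1,\pi(\gamma_1))$, the point $(x,A)*(g_1,B_1)$ equals
\[
\bigl(\gamma_1^{-1}xg_1,\ \pi(\gamma_1)^{-1}AB_1\bigr).
\]

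\textbf{Step 2: compute the lattice part of the second step.} By the equivariance lemma,
\[
[(\gamma_1^{-1}xg_1)g_2]=\gamma_1^{-1}[xg_1g_2]=\gamma_1^{-1}\gamma_{12}.
\]
Substituting into \eqref{eq:associatedquasicocycle} gives
\[
S\bigl((x,A)*(g_1,B_1),(g_2,B_2)\bigr)=B_1^{-1}A^{-1}\pi(\gamma_1)\,L(\gamma_1^{-1}\gamma_{12}).
\]
The factor $B_1^{-1}$ is exactly what the twisting $\rho(g_1,B_1)$ removes; I will keep track of this convention carefully. After the twist the term becomes $A^{-1}\pi(\gamma_1)L(\gamma_1^{-1}\gamma_{12})$.

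\textbf{Step 3: apply the quasimorphism inequality.} Apply it to the pair $(\gamma_1^{-1},\gamma_{12})$, whose product is $\gamma_1^{-1}\gamma_{12}$:
\[
L(\gamma_1^{-1}\gamma_{12})=L(\gamma_1^{-1})+\pi(\gamma_1)^{-1}L(\gamma_{12})+\delta_\pi L(\gamma_1^{-1},\gamma_{12}).
\]
Multiplying by $A^{-1}\pi(\gamma_1)$, the middle term becomes $A^{-1}L(\gamma_{12})$, which is $S((x,A),(g_1g_2,B_1B_2))$. The last term has norm at most $\norm{\delta_\pi L}$, because $A$ and $\pi(\gamma_1)$ are orthogonal.

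\textbf{Step 4: handle the inverse term.} The remaining term is $A^{-1}\pi(\gamma_1)L(\gamma_1^{-1})$. Using
\[
C(a)=\pi(a)L(a^{-1})+L(a)=L(e)-\delta_\pi L(a,a^{-1}),
\]
this term equals $A^{-1}C(\gamma_1)-A^{-1}L(\gamma_1)$. The second summand is $-S((x,A),(g_1,B_1))$. The first has norm at most $\abs{L(e)}+\norm{\delta_\pi L}$, again by orthogonality. Collecting terms yields the inequality with
\[
K=2\norm{\delta_\pi L}+\abs{L(e)},
\]
independent of all choices. The final clause follows by taking $B_i=1$, so that $\rho$ is trivial on $G\times\{1\}$.

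\textbf{Main obstacle.} There is no analytic difficulty here. The only delicate point is bookkeeping: checking that the fiber coordinate $\pi(\gamma_1)^{-1}AB_1$ produces exactly the factor $\pi(\gamma_1)$ needed to match the quasimorphism identity, and that the $B_1$ factor is cancelled by the chosen convention for $\rho$. The uniformity of the bound relies only on orthogonality of $\pi$ and of the elements of $\mc C$.
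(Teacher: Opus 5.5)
Your computation is the paper's own. You rewrite $(x,A)*(g_1,B_1)=(\gamma_1^{-1}xg_1,\pi(\gamma_1)^{-1}AB_1)$, use equivariance of $[\cdot]$, apply $\delta_\pi L$ to the pair $([xg_1]^{-1},[xg_1g_2])$, and absorb $\pi(\gamma_1)L(\gamma_1^{-1})$ through the bounded quantity $C(\gamma_1)$, which gives $K=2\|\delta_\pi L\|+|L(e)|$. The one thing to state explicitly rather than call bookkeeping is that cancelling the factor $B_1^{-1}$ requires $\rho((g,B))=B$, not the $B^{-1}$ printed in the statement: with $B^{-1}$ the terms do not cancel, whereas $B\mapsto B$ is a homomorphism and is what the paper's computation actually uses when it multiplies by $B_1$.
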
 

Quasicocycles are natural analogues of cocycles for actions, and play a central role in our arguments. Observe that in particular a quasicocycle for a $\Z-$action on a space $X$  (thus, given by some invertible map $T:X\toit$) is equivalent to a sequence of functions $(S_n:X\to \R^n)$ so that, 
\[
	(\forall n,m), \quad \norml{S_{n+m}-S_n-S_m\circ T^n}\leq C
\]
for some uniform constant $C$. Similarly for semi-actions of $\N$. Thus, in this case we will refer to the sequence $\bm{S}=(S_n)$ as the quasicocycle $\bm S$.

\begin{remark}[Non-uniform lattices.]
The results given in this section also hold for non-uniform lattices. To achieve this one can use that the word and Riemannian metrics on $\Gamma$ are uniformly comparable \cite{Lubotsky2000}. 
\end{remark}




%% file: 3EqStCenIso.tex
\section{Equilibrium states for Center Isometries}\label{sec:existeEq}

We will study the action of certain elements $a\in G$ on $\Sigma'\times G'$. In this section we introduce such elements and develop a thermodynamic formalism adapted to this setting. To avoid unnecessary technicalities, we work in a more general framework.

Consider a laminated space $(X,\ms{W})$, whose leaves are $c$-dimensional smooth manifolds. We assume that $T\ms{W}$ is a continuous bundle over $X$, equipped with a continuous Riemannian metric whose induced distance is locally comparable to that of $X$. For $\delta>0$ and $x\in X$, we denote by $B^{\ms{W}}_\delta(x)$ the local plaque centered at $x$ of radius $\delta$, where distance is measured intrinsically along $\ms{W}(x)$.

Let $T:X\to X$ be a homeomorphism preserving $\ms{W}$.

\begin{definition}\label{def:plaqueexp}
	The pair $(T,\ms{W})$ is called plaque expansive if there exists $\Cexp>0$ so that 
	\[
	(\forall x,y\in X, \forall 0<\del\leq \Cexp):\ \dis[X]{T^nx}{T^ny}\leq \del\Rightarrow y\in B^{\ms{W}}_{3\del}(x).
	\]
\end{definition}

Since $\ms{W}$ will be fixed, we will simply say that \emph{$T$ is plaque expansive}. This notion was introduced by Hirsch, Pugh and Shub \cite{HPS} in the context of partially hyperbolic dynamics, which will be our main example.

\begin{definition}[Center Isometry]
    Suppose that $X$ is a closed smooth manifold. A $\mc{C}^1$ diffeomorphism $T:X\toit$ is a center isometry if there exists a $DT-$invariant splitting $TX=E^s\oplus E^c\oplus E^u$, and a Riemannian metric on $X$ so that
	\begin{enumerate}
	 	\item $DT|E^c$ is an isometry.
	 	\item $\norm{DT|E^s}, \norm{DT^{-1}|E^u}<1$.
	 \end{enumerate} 
\end{definition}

If $T$ is a center isometry we refer to the bundles $\Es, \Eu, \Ec, \Ecs=\Ec\oplus\Es, \Ecu=\Ec\oplus\Eu$ as the \emph{stable}, \emph{unstable}, \emph{center}, \emph{center stable}, and \emph{center unstable}, respectively.

\begin{example}
In the setting of the introduction, there are always elements $a\in G$ so that its induced map $T_a:M'=\Gamma'\backslash G'\toit$ is a center isometry. See the last Section for a discussion.
\end{example}

The reader is referred to \cite{HPS}, \cite{EqStatesCenter} for more on center isometries. For us it will be important that 
each bundle $E^*, *\in\{s,c,u,cs,cu\}$ is integrable to some ($T-$invariant) lamination $\FolW{*}$ of smooth leaves. See \cite{PartSurv}.

\begin{notation}
 If $T$ is a center isometry we write
    \[
    B_{\eps}^{*}(x)=B_{\eps}^{\FolW{*}}(x)=\{y\in \FolW{*}(x): \dis[\FolW{*}]{x}{y}<\eps\} \qquad *\in\{s,c,u,cs,cu\}.
    \]
    Above $\dis[\FolW{*}]{x}{y}$ denotes the intrinsic distance inside $\FolW{*}(x)$.
\end{notation}

Our strategy is the following. Starting from a $\pi$-quasimorphism $L:\Gamma\to\R^N$, we have shown how to construct a quasicocycle $\ms S$ for the action of $G$ on $M'$. Restricting this quasicocycle along the dynamics generated by some particular center isometry, we will show that $\ms S$ is bounded, and hence deduce that $L$ is bounded. The goal of this section is to develop the necessary tools for this argument.

The method consists of adapting ideas from thermodynamic formalism, in particular developing a Gibbs-type framework for center isometries. While similar ideas were used in \cite{TermoEqSt} in the hyperbolic setting, the present context requires a different approach, as the dynamics here is far from uniformly hyperbolic.

Some weak hyperbolicity however is given by plaque expansivity. We need the following.

\begin{proposition}[\cite{PartSurv}]
	If $T$ is a center isometry, then $(T,\FolW{c})$ is plaque expansive.
\end{proposition}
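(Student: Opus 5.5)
The plan is the standard Hirsch--Pugh--Shub argument via local product structure. Assume, as recalled above, that each of $E^s,E^c,E^u,E^{cs},E^{cu}$ integrates to a $T$-invariant lamination. First I will set up a \emph{uniform local product structure} at some scale $\eps_0>0$. Since the splitting $TX=E^s\oplus E^c\oplus E^u$ is continuous and uniformly transverse on the compact manifold $X$, there are $\eps_0>0$ and $\kappa\ge1$ with the following property. Whenever $\dis[X]{x}{y}\le\eps_0$, the plaques $B^{cs}_{\kappa\eps_0}(x)$ and $B^{u}_{\kappa\eps_0}(y)$ meet in a unique point $w$. Inside $\FolW{cs}(x)$, the plaques $B^c_{\kappa\eps_0}(x)$ and $B^s_{\kappa\eps_0}(w)$ meet in a unique point $z$. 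Moreover, the three leafwise distances $\dis[\FolW{c}]{x}{z}$, $\dis[\FolW{s}]{z}{w}$, $\dis[\FolW{u}]{w}{y}$ are each at most $(1+\eta)\,\dis[X]{x}{y}$. Here $\eta>0$ can be made as small as we like by shrinking $\eps_0$ and replacing the metric by one in which the three bundles are almost orthogonal near the diagonal; this does not affect the center-isometry hypotheses, after adapting the metric. This almost-orthogonality is how I intend to obtain the explicit constant $3$ in \Cref{def:plaqueexp}, and I will take $\Cexp$ small compared to $\eps_0$.

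The dynamical step uses the first hypothesis, $\dis[X]{T^nx}{T^ny}\le\del\le\Cexp$ for all $n$. At every time $n$ we get product points $w_n$ and $z_n$ for the pair $(T^nx,T^ny)$. By uniqueness of the intersections and invariance of the laminations, $T w_n=w_{n+1}$ and $Tz_n=z_{n+1}$ as long as the images stay inside the $\kappa\eps_0$-plaques, which holds because $\del$ is small and $T$ is uniformly continuous. Along the unstable plaques, $DT$ expands by a factor $\lambda^{-1}>1$, where $\lambda=\max\{\norm{DT|E^s},\norm{DT^{-1}|E^u}\}<1$. Hence $\dis[\FolW{u}]{T^n w}{T^n y}\ge\lambda^{-n}\dis[\FolW{u}]{w}{y}$ for $n\ge 0$. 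On the other hand, each of these distances is bounded by $(1+\eta)\del$. So $w=y$, and the unstable component vanishes. Running the same argument for $T^{-1}$, which expands the stable plaques, gives $z=w$. Therefore $y=z\in\FolW{c}(x)$ with $\dis[\FolW{c}]{x}{y}\le(1+\eta)\del<3\del$, that is, $y\in B^{\ms W}_{3\del}(x)$ for $\ms W=\FolW{c}$.

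Isometry of $DT|E^c$ is not really needed for the expansivity itself. It only ensures that the center plaques $B^c_{\kappa\eps_0}(T^nx)$ are mapped to center plaques of the same size, so the product structure stays uniform along the whole orbit and the identifications $Tw_n=w_{n+1}$ are legitimate.

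I expect the main obstacle to be the local product structure. It requires the center-stable and center lamination plaques to be \emph{locally unique}, or at least coherent with $\FolW{c}$, so that $z$ is well defined and the intersections commute with $T$. For a center isometry I would justify this from center bunching, which holds trivially since the center is isometric, and from the integrability results quoted from \cite{PartSurv}. Once coherence is granted, the rest is the routine expansion estimate above.
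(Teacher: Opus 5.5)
Your argument is correct for the property required in \Cref{def:plaqueexp}, but it takes a different route from the paper, which gives no argument and simply cites \cite{PartSurv}.

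\textbf{What each approach covers.} The Hirsch--Pugh--Shub notion behind that citation is formulated for pseudo-orbits that respect center plaques, and it implies the true-orbit statement. You prove only the true-orbit version, which is all that is stated and used later. You do it directly from three ingredients: the invariant laminations (which the paper also takes from the literature), a local $cs\times u$ and $c\times s$ product structure, and expansion of $T$ on $\FolW{u}$ and of $T^{-1}$ on $\FolW{s}$.

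\textbf{Role of the isometric center.} Your route makes clear that for true orbits the isometric center is almost irrelevant. The identities $Tw_n=w_{n+1}$ and $Tz_n=z_{n+1}$ only need $\norm{DT}\le\Lambda$ and $\Lambda(1+\eta)\del<\kappa\eps_0$. The correct justification is that $T$ is Lipschitz along leaves, not uniform continuity, since you are controlling intrinsic plaque distances. Where the isometry does help is in the coherence you flag as the main obstacle, and there center bunching is not the relevant input. The isometry gives it directly through your own expansion step:
\begin{itemize}
\item If $q\in B^c_r(x)$, then $\dis[X]{T^nx}{T^nq}<r$ for all $n\ge 0$, because $T$ is isometric on center leaves.
\item By the same uniqueness-plus-expansion argument, the unstable component of $q$ relative to the $cs$-plaque of $x$ vanishes, so $B^c_r(x)\subset\FolW{cs}(x)$.
\item Using contraction instead of isometry, the same reasoning gives $B^s_r(w)\subset\FolW{cs}(w)$.
\end{itemize}

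\textbf{The constant $3$.} Re-metrizing to make the bundles almost orthogonal is unnecessary, and it muddles the constant $3$ because it changes $d_X$. In the original metric, the leafwise components are bounded by $C\,\dis[X]{x}{y}$ with an angle-dependent constant $C$. The expansion argument still yields $y\in B^c_{\kappa\eps_0}(x)$. Then $\dis[\FolW{c}]{x}{y}\le(1+\eta)\dis[X]{x}{y}\le(1+\eta)\del<3\del$, because on center plaques of small radius the intrinsic and ambient distances are uniformly comparable with ratio close to $1$.
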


For studying certain invariant measures associated to the system  we will use the next property.

\begin{definition}\label{def:specification}
	We say that $T:X\to X$ has the specification property if for each $\delta>0$ there exists $M(\delta)$ such that the following holds: if $I_1,\dots, I_k$ are intervals with $d_{\R}(I_i,I_j)\geq M(\delta)$ for $i\neq j$ and $x_1,\dots, x_k\in X$, $I_i=[a_i,b_i]$, $b_i-a_i=n_i$, then there exists $z\in X$ so that 
	\[
	\dis{T^{j-a_i}z}{T^{j-a_i}x_i}<\delta \text{ for }j\in I_i.
	\]

    If $T$ preserves $\ms{W}$, we say that $T:X\to X$ has the quasiperiodic specification property if $z$ can be chosen so that $T^{l}z\in B_{\delta}^{\ms W}(z)$, for $l=\sum_{i=1}^k n_i+kM$. 
\end{definition}

The specification property is due to Bowen (cf. \cite{Bowen1974}), where he required furthermore that the specifying point $z$ be periodic, something natural in the context of hyperbolic dynamics. In the cases that we are interested, periodic points are not guaranteed to exist, which is the reason for introducing a natural substitute, as it will allow us to work with quasiperiodic points (defined below); it will be shown that these always exist for center isometries.

\begin{definition}\label{def:quasiperiodic}
	A finite orbit segment $\{x, Tx, \ldots, T^{n-1}x\}$ is called $(n,\delta)$-quasiperiodic if $T^n(x)\in B_{\delta}^{\ms W}(x)$. 

   We say that $x$ is $(n,\delta)$-quasiperiodic if it belongs to an $(n,\delta)$-quasiperiodic orbit segment, and that it is $\delta$-quasiperiodic if it is $(n,\delta)$-quasiperiodic, for some $n\in\N$.
\end{definition}

The following is clear.

\begin{lemma}
	Suppose that $(T,\ms{F})$ has the quasiperiodic specification property.
    \begin{enumerate}
    	\item For $\delta>0$ sufficiently small, the set of $\delta$-quasiperiodic points is dense.
	   \item $T$ is topologically mixing, meaning: for $U,V \subset X$ non-empty open sets, there exists $n_{U,V}$ such that $\forall n\geq n_{U,V}$, $T^nU\cap V\neq\emptyset.$
    \end{enumerate}
\end{lemma}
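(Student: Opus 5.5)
Both items follow directly from \Cref{def:specification} applied with one or two orbit segments; the only care needed is in matching $\delta$ to the size of the open sets and in the convention for $\ms W$-plaques.

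\textbf{Density of quasiperiodic points.} Fix $x\in X$ and $\epsilon>0$. We look for a $\delta$-quasiperiodic point within $\epsilon$ of $x$.

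Set $\delta'=\min(\delta,\epsilon)$ and $M=M(\delta')$. Apply the quasiperiodic specification property with $k=1$, $x_1=x$ and a single interval $I_1=[0,n_1]$, where $n_1\geq0$ is arbitrary. This produces $z\in X$ such that:
\begin{itemize}
\item $d(T^jz,T^jx)<\delta'$ for $0\leq j\leq n_1$, and in particular $d(z,x)<\epsilon$;
\item $T^{l}z\in B^{\ms W}_{\delta'}(z)\subset B^{\ms W}_{\delta}(z)$ with $l=n_1+M$.
\end{itemize}
By \Cref{def:quasiperiodic}, $z$ is $(l,\delta)$-quasiperiodic. Since $x$ and $\epsilon$ were arbitrary, the $\delta$-quasiperiodic points are dense. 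The phrase ``$\delta$ sufficiently small'' is only needed so that the plaques $B^{\ms W}_\delta$ are genuine local plaques.

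\textbf{Topological mixing.} Let $U,V\subset X$ be nonempty open sets. Choose balls $B(x,\delta)\subset U$ and $B(y,\delta)\subset V$, and put $M=M(\delta)$. Now apply ordinary specification (the periodicity clause is not needed) with $k=2$:
\begin{itemize}
\item $x_1=x$ on $I_1=[0,0]$;
\item $x_2=y$ on $I_2=[n,n]$, for any $n\geq M$.
\end{itemize}
The two intervals are separated by $n\geq M$. Specification gives $z$ with $d(z,x)<\delta$ and $d(T^nz,y)<\delta$, reading the shadowing condition on $I_i$ as $d(T^jz,T^{j-a_i}x_i)<\delta$. Hence $z\in U$ and $T^nz\in V$, so $T^nU\cap V\neq\emptyset$ for every $n\geq n_{U,V}:=M(\delta)$.

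\textbf{Main obstacle.} The only point requiring care is the indexing in \Cref{def:specification}. As printed, the condition $\dis{T^{j-a_i}z}{T^{j-a_i}x_i}<\delta$ should be read as $z$ shadowing $x_i$ along $I_i$ after a shift, namely $d(T^{j}z,T^{j-a_i}x_i)<\delta$. With that reading the argument above goes through verbatim.
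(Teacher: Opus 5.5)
Your argument is correct and is exactly the direct verification from \Cref{def:specification} and \Cref{def:quasiperiodic} that the paper leaves to the reader, which simply asserts the lemma as clear. Your reading of the shadowing condition as $d(T^jz,T^{j-a_i}x_i)<\delta$ is the right one, and for a homeomorphism it is equivalent to Bowen's form $d(T^jz,T^jx_i)<\delta$. One cosmetic fix: in the first item take $n_1\geq 1$, so that $l=n_1+M\geq 1$ and the quasiperiodicity is not the vacuous $T^0z=z$.
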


Next we give our main example of systems having the quasiperiodic specification property. Fix $T:X\toit$ a center isometry, and define the sets ($x\in X, \eps>0, d\in\N$)
\[
J^{csu}_d(\eps,x)=\bigcup_{y\in B^{cs}_\eps(x)} T^{-d}B^u_\eps(T^dy).
\]
Below we adapt the proof of specification for ergodic group automorphism of compact Abelian groups due to Lind \cite{Lind_1978} to this more general context.

\begin{lemma}
	Suppose that $X$ is compact and let $T:X\toit$ be a center isometry whose unstable lamination is minimal. Then, for every $\eps>0$ there is $M(\eps)>0$ such that for every pair of points $x_1, x_2$, and for every $n\geq M(\eps_0)$ and $d\geq 0$, there is $u_2\in B^{cs}_\eps(x_2)$
    \[
     T^{n}\left(B^u_\eps(x_1)\right) \cap J^{csu}_d(\eps,x_2)\supset T^{-d}\left(B^u_\eps(T^d u_2)\right).
    \]
\end{lemma}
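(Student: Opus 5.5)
Throughout, $\eps$ is taken below the local product structure scale of the center isometry $T$, and every point of $J^{csu}_d(\eps,x_2)$ is written as $T^{-d}w$ with $w\in B^u_\eps(T^dy)$ for some $y\in B^{cs}_\eps(x_2)$. The idea is to combine minimality of the unstable lamination with uniform expansion along $\FolW{u}$, in the spirit of Lind's argument, so that large iterates of an unstable plaque cross every local center-stable plaque. A local product structure then turns such a crossing into a whole unstable plaque inside the target set.

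\textbf{Step 1 (uniform density of unstable leaves).} Since $\FolW{u}$ is minimal and $X$ is compact, for every $\eps>0$ there is $R=R(\eps)$ such that for every $z\in X$ the plaque $B^u_R(z)$ meets $B^{cs}_{\eps/2}(w)$ for every $w\in X$. I would prove this by contradiction. If it failed, there would be sequences $z_k$, $w_k$ with $R_k\to\infty$. Passing to limits $z_k\to z$, $w_k\to w$ and using continuity of the laminations, the full leaf $\FolW{u}(z)$ would miss an open neighborhood of $w$, namely a small product box around $w$. This contradicts minimality.

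\textbf{Step 2 (expansion).} Since $\norm{DT^{-1}|E^u}<1$, there is $\lambda>1$ with $T^n B^u_\eps(x_1)\supset B^u_{\lambda^n\eps}(T^nx_1)$. Choose $M(\eps)$ with $\lambda^{M}\eps\geq R+\eps$. For $n\ge M$, Step 1 gives a point $q\in T^nB^u_\eps(x_1)$ lying in $B^{cs}_{\eps/2}(x_2)$. Moreover the unstable plaque $B^u_{\eps}(q)$ remains inside $T^nB^u_\eps(x_1)$, because $q$ sits at distance at most $R$ from $T^nx_1$ in a plaque of radius at least $R+\eps$. Set $u_2:=q\in B^{cs}_\eps(x_2)$.

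\textbf{Step 3 (handling $d$).} We need $T^{-d}B^u_\eps(T^du_2)\subset T^nB^u_\eps(x_1)\cap J^{csu}_d(\eps,x_2)$.

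Membership in $J^{csu}_d(\eps,x_2)$ is immediate from the definition, since $u_2\in B^{cs}_\eps(x_2)$.

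For the inclusion in $T^nB^u_\eps(x_1)$, note that $T^{-d}$ contracts unstable plaques. Hence
\[
T^{-d}B^u_\eps(T^du_2)\subset B^u_{\eps}(u_2)\subset T^nB^u_\eps(x_1).
\]
This holds uniformly in $d\geq0$, since $T^{-d}$ maps $B^u_\eps(T^du_2)$ into $B^u_{\lambda^{-d}\eps}(u_2)$.

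\textbf{Main obstacle.} I expect Step 1 to be the delicate point, namely the passage from minimality to a uniform radius $R$. One needs the limiting argument to respect intrinsic leaf distances, via continuity of plaques of bounded radius, and the center-stable plaques to give genuine transversals. Both require the local product structure of $\FolW{cs}$ and $\FolW{u}$, which I would invoke from \cite{PartSurv}.
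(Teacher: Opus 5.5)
Your proposal is correct and follows the same route as the paper, which defers to Lind's Lemma~6.2 and describes the result as a direct consequence of uniform minimality of the unstable lamination together with expansion along unstable leaves; your Steps~1 and~2 make exactly these two ingredients explicit. Your Step~3 observation is also correct and clarifies something the paper leaves implicit: since $T^{-d}B^u_\varepsilon(T^du_2)\subset B^u_\varepsilon(u_2)$ and membership in $J^{csu}_d(\varepsilon,x_2)$ is automatic once $u_2\in B^{cs}_\varepsilon(x_2)$, the case $d\geq 0$ reduces to $d=0$.
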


\begin{proof}
	This is Lemma $6.2$ in Lind's paper. It is a direct consequence of (uniform) minimality of $\FolW{u}$ together with expansion of these leaves.
\end{proof}

\begin{proposition}\label{pro:especificacioncenterisometry}
	In the hypotheses above, $T$ has the specification property.
\end{proposition}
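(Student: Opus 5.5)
We prove specification by induction on the number $k$ of orbit segments, following Lind's argument for automorphisms of compact abelian groups. The previous Lemma is the connecting step, and uniform contraction along $\FolW{s}$ and $\FolW{u}$ controls the errors.

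Fix $\delta>0$. First choose $\eps>0$ small compared to $\delta$. Since $DT|E^c$ is an isometry and $T$ contracts $E^s$ and expands $E^u$, it follows that for every $x$, every $d\ge0$ and every $y\in J^{csu}_d(\eps,x)$, the points $T^jy$ and $T^jx$ stay within $C\eps$ for $0\le j\le d$, with $C$ uniform. Indeed, write $y\in T^{-d}B^u_\eps(T^dy')$ with $y'\in B^{cs}_\eps(x)$. For $0\le j\le d$, the unstable component $T^jy$ versus $T^jy'$ is at most $\eps$ because backward iterates contract $\FolW{u}$. The center-stable component $T^jy'$ versus $T^jx$ is at most about $\eps$: it does not grow, because $\FolW{c}$ is isometric and $\FolW{s}$ contracts. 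Here we use local product structure of the continuous laminations $\FolW{cs},\FolW{u}$ and the fact that the metric is adapted. We then set $M(\delta):=M(\eps)$, given by the previous Lemma.

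The induction runs backward along the segments. Let $I_i=[a_i,b_i]$ be ordered, $n_i=b_i-a_i$, with gaps $a_{i+1}-b_i\ge M$. Start with the set $W_k=J^{csu}_{n_k}(\eps,x_k)$, whose points shadow $x_k$ for $n_k$ steps. Apply the previous Lemma with $x_1:=T^{n_{k-1}}x_{k-1}$ (relabelled), $x_2:=x_k$, $n=a_k-b_{k-1}\ge M$ and $d=n_k$. This produces $u_k$ with
\[
T^{a_k-b_{k-1}}\bigl(B^u_\eps(T^{n_{k-1}}x_{k-1})\bigr)\supset T^{-n_k}B^u_\eps(T^{n_k}u_k)\subset W_k .
\]
Pulling back, we obtain a piece of unstable plaque near $T^{n_{k-1}}x_{k-1}$, all of whose points land in $W_k$ at the right time. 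Pulling further back by $T^{-n_{k-1}}$ gives a set of the form $T^{-n_{k-1}}B^u_\eps(\cdot)$ near $x_{k-1}$, whose points shadow $x_{k-1}$ during $I_{k-1}$ (again by backward contraction of $\FolW{u}$) and then land in $W_k$.

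The delicate point is to iterate this construction. We must make the new target set again contain a set of the form $J^{csu}_{d}(\eps,x_{k-1})$, or at least an unstable plaque of the form $T^{-d}B^u_\eps(T^d u)$ with $u\in B^{cs}_\eps(x_{k-1})$, so that the Lemma applies at the next stage with the accumulated length $d=b_k-a_{k-1}$. This is exactly what the inclusion in the Lemma provides, as it outputs such a plaque. We then repeat the step with $x_{k-2}$, and so on, and any point $z$ in the final nested set, translated by $T^{-a_1}$, is the required specifying point. Errors do not accumulate because each stage only adds deviations of size $\eps$ along unstable plaques, and these are contracted backward. After the final choice of $\eps\ll\delta$, the distance at every time in every $I_i$ is below $\delta$.

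The main obstacle is this bookkeeping: keeping the constant uniform across all $k$, and checking that $M(\eps)$ from the Lemma (which uses uniform minimality of $\FolW{u}$) does not depend on $d$. The Lemma is stated for all $d\ge0$ exactly for this purpose, so I expect the verification to be routine once the shadowing estimate for $J^{csu}_d$ is established.
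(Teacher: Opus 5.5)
There is a genuine gap in the iteration step. You flag it as the delicate point, but then resolve it incorrectly. The previous Lemma is asymmetric. Its input is a full unstable plaque $B^u_\eps(x_1)$ (the source) together with the thick set $J^{csu}_d(\eps,x_2)$ (the target), and the Lemma, not you, chooses the strip $T^{-d}B^u_\eps(T^du_2)$.

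In your backward scheme, after linking $I_{k-1}$ to $I_k$, the set of points that follow $x_{k-1}$ on $I_{k-1}$ and then enter $W_k$ is only the thin piece $T^{-d}B^u_\eps(T^du)$, $d=b_k-a_{k-1}$, which lies in a single unstable leaf. This set fails as a next-stage target in both of the ways you propose:
\begin{itemize}
\item It does not contain $J^{csu}_d(\eps,x_{k-1})$. Points of that set shadow the orbit of $x_{k-1}$ itself up to time $d$, not $x_k$ on $I_k$, so that target is wrong anyway.
\item A thin unstable piece cannot serve as a target for the Lemma. The set $T^{a_{k-1}-b_{k-2}}B^u_\eps(T^{n_{k-2}}x_{k-2})$ lies in another unstable leaf and is disjoint from it unless the two leaves coincide.
\end{itemize}
The only way to recover a thick target is to re-center it, e.g.\ at $J^{csu}_d(\eps,u)$. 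But then each stage adds an error of order $\eps$ through the $cs$-displacement $u_2\in B^{cs}_\eps(\cdot)$. This displacement is not contracted, since the center is isometric. So the shadowing error grows linearly in $k$, and no $M(\delta)$ independent of $k$ results. Your remark that errors are contracted backward applies only to unstable displacements, not to these.

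The fix is to run the induction forward, which is what the paper does. Put $C_i=J^{csu}_{n_i}(\eps,x_i)$ and $D_1=T^{-n_1}B^u_\eps(T^{n_1}x_1)\subset C_1$. Given $D_i=T^{-n_i}B^u_\eps(T^{n_i}u_i)$, apply the Lemma with source $T^{n_i}D_i=B^u_\eps(T^{n_i}u_i)$, target $C_{i+1}$, $n=a_{i+1}-b_i$ and $d=n_{i+1}$. This yields $D_{i+1}=T^{-n_{i+1}}B^u_\eps(T^{n_{i+1}}u_{i+1})\subset T^{a_{i+1}-a_i}D_i\cap C_{i+1}$.

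This works for two reasons. The Lemma's output, pushed forward $n_{i+1}$ steps, is again a full $\eps$-unstable plaque, i.e.\ a legitimate source for the next step. And the target is always centered at the true point $x_{i+1}$, so nothing accumulates. Then $D_1\supset T^{-(a_2-a_1)}D_2\supset\cdots\supset T^{-(a_k-a_1)}D_k\neq\emptyset$. For $u$ in the last set, $z=T^{-a_1}u$ satisfies $T^{a_i}z\in C_i$ for every $i$. Your diameter estimate for $J^{csu}_{n_i}$ then gives shadowing within $C\eps$, uniformly in $k$.
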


\begin{proof}
    Fix $\eps>0$ (small) and consider the number $M(\eps)$ given in the previous lemma. Suppose that we are given points $x_i, i=1,\ldots, k$ and corresponding intervals $I_i=[a_i,b_i], n_i=b_i-a_i$, as in the definition of specification property, with $\dis[\R]{I_i}{I_j}\geq M(\eps)$. Denote $d=\sum_{i=1}^k n_i+kM$. Let $C_i=J^{csu}_d(\eps,x_i)$. Denote $D_i=T^{-n_i}(B_u(\eps,T^{n_i}u_i))$ where the $u_i$ are going to be chosen inductively. Since $T$ acts isometrically on $\FolW{c}$ we have that 
	\begin{equation} \label{eq6.2}
		\diam(T^jC_i)\leq 2\eps\;\;(0\leq j\leq d_i, 1\leq i\leq k)
	\end{equation}

	Let $u_1=x_1$: recall that $a_2-b_1\geq M(\eps)$, hence by the previous lemma there exists $u_2\in B_{cs}(\eps,x_2)$  such that 
	\begin{align*}
	T^{a_2-a_1}(D_1) \cap C_2&=T^{a_2-b_1}\left(S^{n_1}(D_1)\right)\cap C_2=T^{a_2-b_1}\left(B^u_\eps(T^{n_2}u_1)\right)\cap C_2\\
	&\supset T^{-d_2}\left(B^u_\eps(T^{d_2}u_2)\right)=D_2.
	\end{align*}
	
	Now we repeat, and find $u_{i+1}\in  B^{cs}_{\eps}(x_{i+1})$ such that 
	\[
	T^{a_{i+1}-a_i}(D_i) \cap C_{i+1}\supset D_{i+1}.
	\]
	Hence, 
	\[
	D_1\supset T^{-a_2-a_1}(D_2)\supset\dots\supset T^{-a_k-a_1}(D_k).
	\]
	Choose $u\in T^{-a_k-a_1}(D_k)$ and let $z=T^{-a_1}u$. It follows that $T^{a_i}(z)\in C_i, \forall i$  and hence from \eqref{eq6.2}, that 
	\[
	\dis{T^n z}{T^nx_i}\leq 2\eps\;\;(a_i\leq n\leq b_i,\,1\leq i\leq k).
	\]
\end{proof}

We are ready to establish the main technical tool of this part.

\begin{theorem}\label{thm:qmspecification}
Let $T:X\toit$ be a center isometry with minimal unstable lamination. Then $T$ has the quasiperiodic specification property.
\end{theorem}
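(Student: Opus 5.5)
The plan is to upgrade the specifying point produced by \Cref{pro:especificacioncenterisometry} to a quasiperiodic one. I will append one extra "closing" segment to the specification and then use plaque expansivity to force the return to land in the center plaque of the starting point.

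Fix $\delta>0$ small, with $\delta\leq \Cexp$ as in \Cref{def:plaqueexp}. Given $x_1,\dots,x_k$ and intervals $I_i$ with gaps at least $M$, apply the specification property with a smaller tolerance $\eps\ll\delta$. The point $z$ obtained in the proof of \Cref{pro:especificacioncenterisometry} is produced by a nested-intersection argument inside local product boxes $J^{csu}_d(\eps,\cdot)$. I will rerun that construction with one additional step: after the last interval $I_k$ and a further gap of length $M(\eps)$, at time $l=\sum n_i+kM$, I require the orbit to enter $J^{csu}_d(\eps,x_1)$ again. This is possible by the Lind-type lemma, using minimality of $\FolW{u}$. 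The result is that $T^{l}z$ lies in a small $csu$-box around $z$ (after translating times so that $a_1=0$).

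Next I need to close the return along $\FolW{c}$. The plan is a fixed-point argument in the hyperbolic directions. Consider the map $\Phi$ defined on a local stable-unstable transversal $W=\bigcup_{y\in B^s_\eps(z)}B^u_\eps(y)$ by following $T^l$ and then holonomy along $\FolW{c}$ back to $W$. Since $T^l$ contracts $E^s$ by $\lambda^l$ and expands $E^u$ by $\lambda^{-l}$, while $\FolW{c}$ is isometric, $\Phi$ behaves like a hyperbolic map on the $(s,u)$ coordinates. A standard graph-transform or shadowing argument, applied to the sequence of $u$- and $s$-disks, then produces a point $z'\in W$ with $T^{l}z'\in B^{c}_{\delta}(z')$.

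Alternatively, and more in the spirit of the paper, I can argue by plaque expansivity. I would take a limit of specifying points for periodically repeated specifications (the same data concatenated $m$ times, $m\to\infty$), obtaining a point $w$ whose forward and backward orbits $\delta$-shadow those of $T^{l}w$. Plaque expansivity (\Cref{def:plaqueexp}) then gives $T^lw\in B^{c}_{3\delta}(w)$. The shadowing of the $x_i$ along the $I_i$ persists to the limit by compactness, since each finite stage satisfies it with $\eps$. The main obstacle is this two-sided infinite concatenation. \Cref{pro:especificacioncenterisometry} handles finitely many segments, and I must ensure that the nested sets remain nonempty and that the constants are uniform in the number of repetitions. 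Both follow from the fact that the constant $M(\eps)$ and the box size $2\eps$ in \eqref{eq6.2} are independent of $k$. Finally I rescale $\delta$ to $\delta/3$ to absorb the factor $3$.
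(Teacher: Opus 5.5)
Your first route, closing the near-return by a fixed-point argument in the $s$ and $u$ directions, is the right idea and is essentially the paper's. But the route you actually develop and single out, periodic concatenation followed by plaque expansivity, has a genuine gap. Specification controls the orbit of the specifying point only on the windows $I_i$; on each gap of length $M(\varepsilon)$ nothing is known. For your limit point $w$ you therefore only get $d(T^tw,T^{t+l}w)\le 2\varepsilon$ for $t\in\bigcup_{j\in\mathbb Z}\bigcup_i(I_i+jl)$, not for all $t\in\mathbb Z$. Plaque expansivity requires the full two-sided orbits to stay close, so it cannot be applied. Since $M(\varepsilon)\to\infty$ as $\varepsilon\to0$, continuity of $T$ does not bridge the gaps either.

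The failure is real. If $E^c=0$ (e.g.\ a hyperbolic toral automorphism), plaque expansivity is just expansivity, and your argument would force $T^lw=w$. However, the set of points that $\varepsilon$-shadow the data on all translated windows is a horseshoe-type set, coded by the route the orbit takes through each gap, and $T^lw=w$ exactly when that route sequence is constant. Nothing in the specification construction, or in passing to a limit, forces this. When it fails, the orbits of $w$ and $T^lw$ separate inside some gap and $w$ is not periodic. Compactness makes the infinite concatenation itself harmless, so the obstacle you identified is not the real one.

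What is needed is exactly the closing step of your first route, done concretely. The paper assumes $d(T^nz,z)\le\varepsilon$; your appended return segment is a fine justification of this. It then proceeds in two steps:
\begin{enumerate}
\item It takes the stable disk $A=B^s_{\varepsilon/2}(z)$. Since $T^n$ contracts $A$ into a small neighbourhood of $z$, composing with the projection along local $cu$-plaques back onto the stable disk gives a continuous self-map of $A$. Brouwer then gives a fixed point $y\in A$ whose forward orbit shadows that of $z$ and which satisfies $y\in B^{cu}_{3\varepsilon}(T^ny)$.
\item It runs the same argument for $T^{-1}$ on an unstable disk through $y$, projecting along $cs$-plaques. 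This yields a point $w$ whose return lies in both the local $cs$- and $cu$-plaques, hence in the local center plaque.
\end{enumerate}

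Your single map $\Phi$ on $W$ packages these two steps, but as stated it is not yet a proof, for three reasons. First, the center holonomy is only continuous, so a graph-transform argument, which needs $C^1$ cone estimates, must be replaced by a topological fixed-point argument. Second, ordinary shadowing is unavailable here, because pseudo-orbits of a center isometry can drift along the center. Third, $T^l$ stretches $B^u_\varepsilon(z)$ enormously, so $\Phi$ is defined on many components of $W$. You must restrict it to the points whose orbit stays in the Bowen ball of $z$ up to time $l$; otherwise the fixed point you find need not shadow the $x_i$.
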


\begin{proof}
We use the same notation as in the previous proposition. It is no loss of generality to assume that $\dis{T^nz}{z}\leq \eps$, where $n=l+M$. It remains to show that $z$ can be approximated well by a $(n,\delta)-$quasiperiodic point. Denote $\lam=\max\{\norm{D^sT},\norm{D^uT}^{-1}\}$.

Consider $A=B^{s}_{\frac{\eps}{2}}(z)$: $\dis{T^nA}{z}\leq 2\eps$, thus if $h: B^{s}_{\eps}(T^nz)\to B^{s}_{2\eps}(T^nz)$ denotes the projection along the $cu$-direction, the map $h\circ T^n$ sends $A$ into itself, hence it has a fixed point $y\in A$. Note that $\dis{T^jz}{T^jy}\leq C\lam^j \frac{\eps}{2}$ for every $j\geq 0$, and $y\in B_{cu}(T^ny,3\eps)$. Since $T$ is a center isometry it follows that $T^{n-j}y\in B^{cu}_{3\eps}(T^{-j}y), \forall j=0,\ldots, n-1$.

Replace $T$ by $T^{-1}$ and apply the same argument for the point $y$: we obtain $w\in B^u_{\eps}(y)$ so that $\dis{T^{-j}y}{T^{-j}w}\leq C\lam^j\eps$ for all $j\geq 0$, and thus
\[
	\dis{T^jy}{T^jw}\leq 5\eps, j=0,\ldots, n.
\]
Moreover, $w\in B^{cs}_{4\eps}(T^{-n}w)\cap \Wcu{T^{-n}w}$. The point $w$ is the one we seek.
\end{proof}

\subsection{Pressure} 
\label{sub:pressure}

We now introduce a notion of pressure adapted to quasicocycles. In this part we assume that $T:X\toit$ is a center isometry, acting on the compact space $X$.

\begin{notation}
For $x,y\in X, n\in\N, \eps>0$ we denote
\begin{align*}
&\dis[n]{x}{y}=\max\{\dis{T^ix}{T^iy}:0\leq i\leq n-1\}\\
&\dis[\pm n]{x}{y}=\max\{\dis{T^ix}{T^iy}:0\leq |i|\leq n\}\\
&B(x,n,\eps)=\{y\in X:\dis[n]{x}{y}\leq \eps \}\\
&B^{\pm}(x,n,\eps)=\{y\in X:\dis[\pm n]{x}{y}\leq \eps\}.
\end{align*}
We say that $E \subset X$ is
\begin{itemize}
 		\item $(n,\eps)$-spanning if $X = \bigcup_{x\in E} B(x,n,\eps)$,
 		\item $(n,\eps)$-separated if $d_{n}(x,y)>\eps$ for every $x\neq y\in E$.
 	\end{itemize}
 	Note that since $X$ is compact there exists some minimal size of $(n,\eps)$-spanning sets, and some maximal size of $(n,\eps)$-separated subsets. Such quantities are related to the topological entropy of the map $T$, which we denote $\htop(T)$. See \cite{EquSta} for details and basic properties.
 \end{notation}

Given $\eps>0$ and for a fixed quasicocycle $\bm{S}=(S_n)$ we define
\begin{align}
Z_n(\eps)=\sup\left\{\sum_{x\in E}\exp(S_n(x)):E\text{ is }(n,\eps)-\text{separated}\right\}.
\end{align}

Following arguments of Bowen \cite{Bowen1974} and Franco \cite{Franco_1977}, we will analyze the behavior of $(Z_n=Z_n(\eps))_{n}$. Our quasicocyles will have the following important property.

	\begin{definition}
	We say that a sequence $\bm{S}=(S_n:X\to \R)$ has the Bowen property if there exist $\eps_0>0$ and $C>0$ such that whenever $y\in B(x,n,\eps_0)$, we have 
	\[
	|S_n(x)-S_n(y)|\leq C.
	\]
	The smallest of such constants $C$ is denoted by $\norm[B]{\bm S}$.
\end{definition}

It will be useful to consider a rough equivalence relation on quasicocycles.

\begin{definition}
	We say that a quasicocycle $(S_n)$ is trivial if there is a constant $C>0$ such that $\norml{S_n}\leq C$. We say that two quasicocycles $(S_n)$ and $(S'_n)$ are cohomologous if their difference is trivial.
\end{definition}

Notice that if a $\bm S$ is a quasicocycle with the Bowen property which is only defined on a dense subset, then we could extend it, up to cohomology, to the whole space in such a way that each $S_n$ is continuous.

\begin{convention}
	From now on $\bm S=(S_n)_n$ has the Bowen property, and each $S_n$ is continuous. 
\end{convention}

\begin{lemma}\label{lem:qmisbounded}
If $(S_n)_n$ is a quasicocycle, then there is $C'>0$ such that $\norml{S_n}\leq nC'$.
\end{lemma}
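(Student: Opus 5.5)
Since each $S_n$ is continuous on the compact space $X$, the quantity $C_1:=\norml{S_1}$ is finite. Let $C$ be the quasicocycle constant, so that
\[
\norml{S_{n+m}-S_n-S_m\circ T^n}\leq C\qquad\text{for all } n,m.
\]
The plan is to compare $S_n$ with the Birkhoff-type sum $\sum_{j=0}^{n-1}S_1\circ T^j$ by induction on $n$.

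Taking $m=1$ in the quasicocycle inequality gives
\[
\norml{S_{n+1}-S_n-S_1\circ T^n}\leq C,
\]
and hence $\norml{S_{n+1}}\leq \norml{S_n}+C_1+C$. Iterating from $\norml{S_1}=C_1$ yields
\[
\norml{S_n}\leq nC_1+(n-1)C\leq n(C_1+C).
\]
So the statement holds with $C'=C_1+C$, where the supremum over $X$ is preserved under composition with $T$.

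The only point needing care is that $S_1$ is actually bounded. Under the standing convention this follows from continuity on a compact space. For the quasicocycle $\ms S$ coming from \Cref{pro:quasicoccycleG}, one would instead note that $S_1(x)$ involves $L([xa])$, and $[xa]$ ranges over a finite set because $\Sigma a$ is relatively compact and $\Gamma$ is discrete. If one also wants to allow $n=0$, taking $m=n=0$ gives $\norml{S_0}\leq C$, and the bound $\norml{S_n}\leq n(C_1+C)$ for $n\geq1$ is unaffected.
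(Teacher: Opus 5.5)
Your proof is correct and is exactly the induction (taking $m=1$ in the quasicocycle inequality) that the paper means when it calls the lemma immediate from the quasicocycle property. You also rightly make explicit that boundedness of $S_1$, which follows from continuity on the compact space $X$, is needed; the paper leaves this implicit in its standing convention.
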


\begin{proof}
	Immediate from the quasicocycle property.
\end{proof}

\begin{lemma}\label{lem:particionsumas}
	Let $\eps>0$ and let $x_1,\dots,x_k\in X$, let $z_1,\dots, z_k$ be such that $z_j\in B(\eps,n,x_j)$ for $j=1,\dots k$ and $T^{n_{j}}(z_j)\in 
	B^c_{\gamma_0}(z_{j+1})$ for $j=1,\dots k-1$. Then
    \[
    \left|S_{n_1+\dots +n_k}(z_1)-\sum_{i=1}^kS_{n_i}(x_i)\right|\leq k3C.
    \]
\end{lemma}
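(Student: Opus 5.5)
The plan is to peel off one orbit segment at a time, using the quasicocycle inequality to split the Birkhoff-type sum and the Bowen property to move between nearby orbits. I read the hypotheses as $z_j\in B(x_j,n_j,\eps)$ and $T^{n_j}z_j\in B^c_{\gamma_0}(z_{j+1})$. I take $C$ to be the maximum of the quasicocycle defect constant of $\bm S$ and $\norm[B]{\bm S}$. I also assume $\eps,\gamma_0\le\eps_0$, shrinking them if necessary, where $\eps_0$ is the scale in the Bowen property. Since the intrinsic metric on center leaves is locally comparable to that of $X$, a center-plaque distance $\le\gamma_0$ gives an ambient distance $\le\gamma_0$, up to adjusting constants.

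Set $N_j=n_j+\dots+n_k$; I will prove by downward induction on $j$ that
\[
\left|S_{N_j}(z_j)-\sum_{i=j}^kS_{n_i}(x_i)\right|\le 3(k-j+1)C .
\]
For $j=k$ this reads $|S_{n_k}(z_k)-S_{n_k}(x_k)|\le C\le 3C$. It holds by the Bowen property, since $z_k\in B(x_k,n_k,\eps)$ with $\eps\le\eps_0$.

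For the inductive step, the quasicocycle inequality gives
\[
|S_{N_j}(z_j)-S_{n_j}(z_j)-S_{N_{j+1}}(T^{n_j}z_j)|\le C .
\]
The Bowen property applied to $z_j\in B(x_j,n_j,\eps)$ gives $|S_{n_j}(z_j)-S_{n_j}(x_j)|\le C$. The key point is comparing $S_{N_{j+1}}(T^{n_j}z_j)$ with $S_{N_{j+1}}(z_{j+1})$. Since $T^{n_j}z_j\in B^c_{\gamma_0}(z_{j+1})$ and $T$ is an isometry along $\FolW{c}$, we get $T^{i}T^{n_j}z_j\in B^c_{\gamma_0}(T^iz_{j+1})$ for every $i\ge0$. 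Hence $T^{n_j}z_j\in B(z_{j+1},N_{j+1},\gamma_0)$ for the full length $N_{j+1}$, not just $n_{j+1}$. The Bowen property then bounds this difference by $C$. Combining these three errors of size $C$ with the inductive hypothesis for $j+1$ gives the bound for $j$. At $j=1$ this is the statement, with total error $3kC$.

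The step I expect to need care is the center comparison. A naive approach would split $S_{N}(z_1)$ into pieces along the single orbit of $z_1$, and compare $T^{n_1+\dots+n_{i-1}}z_1$ with $z_i$. Errors along the center direction then accumulate, roughly $(i-1)\gamma_0$, and could leave the Bowen scale. Splitting off only the first segment and replacing the remaining tail by the orbit of $z_{j+1}$ avoids this. It uses the isometric center dynamics to keep the distance $\le\gamma_0$ uniformly over the whole tail. The center-isometry hypothesis enters the proof only here.
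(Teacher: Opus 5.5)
Your proof is correct and is essentially the paper's argument. The paper also inducts on $k$: it splits off the first block with the quasicocycle inequality, applies the Bowen property once on the first block and once to compare $T^{n_1}z_1$ with $z_2$ over the whole remaining length $n_2+\dots+n_k$, and then uses the inductive hypothesis for $(z_2,\dots,z_k)$, at a cost of $3C$ per step exactly as in your downward induction; your explicit check that the center-isometric dynamics keeps $T^{n_1}z_1$ in the full-length Bowen ball of $z_2$, which the paper leaves implicit, is a useful clarification.
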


\begin{proof} We proceed by induction on $k$. For $k=1$, $\left|S_{n_1}(z_1)-S_{n_1}(x_1)\right|\leq C\leq 3C$ by the Bowen property. 
	Now, 
\begin{align*}
		&\left|S_{n_1+\dots +n_{j+1}}(z_1)-\sum_{i=1}^{j+1}S_{n_i}(x_i)\right|\\
		&\leq \left|S_{n_2+\dots +n_{j+1}}(T^{n_1}(z_1))-\sum_{i=2}^{j+1}S_{n_i}(x_i)\right|+\left|S_{n_{1}}(z_1)-S_{n_{1}}(x_{1})\right|\\
		&+\left|S_{n_1+\dots +n_{j+1}}(z_1)-S_{n_2+\dots +n_{j}}(T^{n_1}(z_1))-S_{n_{1}}(z_1)\right|\\
		&\leq \left|S_{n_2+\dots +n_{j+1}}(z_2)-\sum_{i=2}^{j+1}S_{n_i}(x_i)\right|+\left|S_{n_2+\dots +n_{j+1}}(T^{n_1}(z_1))-S_{n_2+\dots +n_{j+1}}(z_2)\right|\\
		&+\left|S_{n_{1}}(z_1))-S_{n_{1}}(x_{1})\right|+\left|S_{n_1+\dots +n_{j+1}}(z_1)-S_{n_2+\dots +n_{j}}(T^{n_1}(z_1))-S_{n_{1}}(z_1)\right|\\
		&\leq 3Cj+C+C+C=3C(j+1).
\end{align*}
\end{proof}

\begin{lemma}\label{lem:BoundZn}
	If $\eps,\eps>0$ are sufficiently small, then there exists $C_{\eps,\eps'}$ so that for every $n\geq 0$,
	\[
	Z_n(\eps')\leq C_{\eps,\eps'} Z_n(\eps).
	\]
\end{lemma}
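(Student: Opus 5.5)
The statement is the standard Bowen-type comparison of partition sums at two scales. It is only nontrivial when $\eps'<\eps$, since for $\eps'\geq\eps$ every $(n,\eps')$-separated set is $(n,\eps)$-separated and the bound holds with constant $1$. So I take $\eps'<\eps\leq\eps_0$, with $\eps_0$ the Bowen scale of $\bm S$. The plan is to start from an $(n,\eps')$-separated set $E'$ that nearly realizes $Z_n(\eps')$, and to map it into a maximal $(n,\eps)$-separated set $E$. The map should have uniformly bounded fibres and change the weights $\exp(S_n)$ by a uniformly bounded factor.

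\textbf{Step 1 (assignment and weights).} Fix a maximal $(n,\eps)$-separated set $E$. By maximality $E$ is $(n,\eps)$-spanning, so for each $x\in E'$ I can choose $\phi(x)\in E$ with $d_n(x,\phi(x))\leq\eps\leq\eps_0$. The Bowen property then gives $S_n(x)\leq S_n(\phi(x))+\norm[B]{\bm S}$. Summing,
\[
\sum_{x\in E'}e^{S_n(x)}\leq e^{\norm[B]{\bm S}}\,\max_{y\in E}\#\phi^{-1}(y)\sum_{y\in E}e^{S_n(y)}.
\]

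\textbf{Step 2 (bounding the fibres).} This is the main obstacle. The fibre $\phi^{-1}(y)$ is an $(n,\eps')$-separated subset of the Bowen ball $B(y,n,\eps)$. For a general map the cardinality of such a set grows exponentially in $n$, so I would use the center isometry structure, which is where the smallness of $\eps$ enters. I would argue via local product structure, writing points of $B(y,n,\eps)$ in $(s,c,u)$ coordinates at scale $\eps$ around $y$.

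The unstable coordinate is the delicate one. Its separation at scale $\eps'$ along the orbit is controlled by the final time $n-1$. There, by expansion along $\FolW{u}$, the relevant points lie in a $u$-plaque of radius comparable to $\eps$, so it can host only boundedly many $\eps'$-separated points. Since $\FolW{u}$ is uniformly expanded, separation at an intermediate time forces separation at the final time up to a constant factor.

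The stable coordinate works symmetrically: separation is decided at time $0$, inside an $s$-plaque of radius about $\eps$.

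The center coordinate is isometric for all times, so center separation at scale $\eps'$ is already visible at time $0$, inside a center plaque of radius about $\eps$.

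Combining the three directions bounds $\#\phi^{-1}(y)$ by a covering number $N(\eps,\eps')$ that is independent of $n$, essentially the number of $\eps'/K$-balls needed to cover an $\eps$-ball in a $\dim X$-dimensional manifold. Making the mixed-coordinate bookkeeping precise is where the care goes. Concretely, I would show that if two points of the fibre have all three local coordinates (at the appropriate times) within $\eps'/K$, with $K$ depending on the bundle angles and the local comparability of leaf and ambient metrics, then $d_n<\eps'$, which is a contradiction.

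\textbf{Step 3 (conclusion).} Taking the supremum over $E'$ and bounding $\sum_{y\in E}e^{S_n(y)}\leq Z_n(\eps)$ yields the lemma with $C_{\eps,\eps'}=e^{\norm[B]{\bm S}}N(\eps,\eps')$.
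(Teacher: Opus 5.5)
Your Steps 1 and 3 are the paper's argument: you map an $(n,\varepsilon')$-separated set into a maximal, hence spanning, $(n,\varepsilon)$-separated set, and the Bowen property costs a factor $e^{\|\mathbf S\|_B}$. The routes diverge in how the fibres are bounded.

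The paper never uses stable or unstable coordinates. By compactness it turns plaque expansiveness of $(T,\mathcal W^c)$ into a finite-time statement: there is $N(\varepsilon')$ such that $2\varepsilon$-closeness for $|k|\le N$ forces either $\varepsilon'$-closeness or lying in a common center plaque. Two points of a fibre then fall into one of two cases. Either they lie in a common center plaque of radius $2\varepsilon$, and the isometric center bounds their number by the $\varepsilon'$-capacity of a center plaque. Or they stay $\varepsilon'$-close at all times in $[N,n-N-1]$; then their separation must show up within $N$ steps of an endpoint, the pairs $(x,T^nx)$ are $\alpha$-separated in $X\times X$, and compactness bounds how many there are.

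You instead use the local product structure of the $s,c,u$ laminations together with the three rates (contraction, isometry, expansion). This lets you read off separation at time $0$, at any time, and at time $n-1$ respectively, so the fibre is bounded by a product of plaque covering numbers.

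What your route buys:
\begin{itemize}
\item an explicit constant, of order $(\varepsilon/\varepsilon')^{\dim X}$ when the product charts are bi-Lipschitz, as in the homogeneous application;
\item a direct proof that $\varepsilon$-Bowen balls have uniformly bounded $\varepsilon'$-capacity.
\end{itemize}

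What it costs:
\begin{itemize}
\item It needs more structure, namely a uniform local product structure for all three laminations.
\item It needs the routine inductive check, which you should write out, that for $z\in B(y,n,\varepsilon)$ the local coordinates of $T^iz$ relative to $T^iy$ are the $T^i$-images of those of $z$ relative to $y$.
\item Your linear threshold $\varepsilon'/K$ assumes Lipschitz holonomies. In general, replace it by a uniform-continuity modulus $\theta(\varepsilon')$ of the product charts; this does not change the count.
\end{itemize}

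The paper's route, following Bowen's original Lemma~1, uses only the topological input it emphasizes (plaque expansiveness plus compactness), at the price of a non-explicit constant.
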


\begin{proof}
    The proof we present is a mild variation of Lemma $1$ of \cite{Bowen1974}. Take $\eps>0$ so that $2\eps\leq \Cexp$ and $0<\eps'\leq \eps$. By plaque expansiveness we get:

  \begin{claim}
  	there exists $N(\eps')$ so that if $\dis{T^kx}{T^ky}\leq 2\eps, |k|\leq N(\eps')$, then either
	\begin{itemize}
		\item $\dis{x}{y}\leq \eps'$, or
		\item $y\in B^c_{2\eps}(x)$.
	\end{itemize}
  \end{claim}

Choose $\al>0$ so that $\dis{x}{y}<\alpha\Rightarrow y\in B^{\pm}(x,N,\eps')$. Let $F$ be a $(n,\eps)$-separated set of maximal size, and let $E$ be a $(n,\eps')$-separated set. If $x\in E$ there exists $a(x)\in F$ so that $x\in B(a(x),n,\eps)$: for each $a\in F$ denote
\[
E_a=\{x\in E:a(x)=a\}.
\]
Observe that for every $x,y\in E_a$ we have that $\dis{T^ix}{T^iy}<2\eps, \forall i=0,\ldots, n-1$. By the previous Claim we have two possibilities, either
\begin{enumerate}
	\item $y\in B^c_{2\eps}(x)$, or
	\item $\dis{T^kx}{T^ky}\leq \eps'$ for all $k=N,\ldots, n-N-1$. In this case, since $\{x,y\}$ is $(n,\eps')$-separated, we get that either $\dis{x}{y}>\alpha$, or $\dis{T^nx}{T^ny}>\alpha$.  
\end{enumerate}

We denote by 
\begin{itemize}
	\item $M_0=\sup_x\{\#K: K \subset B^c_{2\eps}(x), \dis{y_i}{y_j}\geq \eps', \forall y, y'\in K\}$,
	\item $M_1=\sup\{\#G \subset X\times X: \forall (x,y)\neq (x',y')\in G, \max\{\dis{x,x'}, \dis{y}{y'}\}>\alpha\}$,
	\item $M=\max\{M_0,M_1\}$. 
\end{itemize}
Then $\# E_a\leq M$. Note also that for $x\in E_a$, $|S_n(x)-S_n(a)|\leq\norm[B]{\bm{S}}$, and thus
\[
	\sum_{x\in E}e^{S_n(x)}\leq Me^{\norm[B]{\bm{S}}}\sum_{a\in F}e^{S_n(a)}\leq Me^{\norm[B]{\bm{S}}}Z_n(\eps)\Rightarrow Z_n(\eps')\leq Me^{\norm[B]{\bm{S}}}Z_n(\eps). 
\]
\end{proof}

From the previous and \Cref{lem:particionsumas}, proceeding exactly as in Lemma $2$ of \cite{Bowen1974}, we get:

\begin{lemma}
	Given $\eps>0$ sufficiently small, there exists $E_{\eps}, D_{\eps}>0$ so that for $n_1, \ldots, n_k\geq 1$
	\[
	\prod_{i=1}^k E_{\eps}Z_{n_i}(\eps)\leq Z_{n_1+\cdots +n_k}(\eps)\leq  \prod_{i=1}^k D_{\eps}Z_{n_i}(\eps).
	\]
\end{lemma}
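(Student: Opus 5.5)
Both inequalities follow Bowen's Lemma~2 in \cite{Bowen1974}. The upper bound comes from restricting separated sets to the blocks and using the quasicocycle property. The lower bound comes from gluing separated sets via the quasiperiodic specification of \Cref{thm:qmspecification} and \Cref{lem:particionsumas}. By induction on $k$ it suffices to treat $k=2$. The constants must be uniform in $k$, so I will track them per block rather than use a crude induction.

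\emph{Upper bound.} Let $E$ be $(n_1+\cdots+n_k,\eps)$-separated and set $m_i=n_1+\cdots+n_{i-1}$. For each $i$ fix a maximal $(n_i,\eps/2)$-separated set $F_i$; it is $(n_i,\eps/2)$-spanning. Map $x\in E$ to the tuple $(f_1,\dots,f_k)$ with $T^{m_i}x\in B(f_i,n_i,\eps/2)$. Two points with the same tuple would be within $\eps$ in the $d_{n_1+\cdots+n_k}$ metric, so the map is injective. Iterating the quasicocycle inequality gives $|S_{\sum n_i}(x)-\sum_i S_{n_i}(T^{m_i}x)|\le kC$. The Bowen property, applicable once $\eps/2\le\eps_0$, gives $|S_{n_i}(T^{m_i}x)-S_{n_i}(f_i)|\le\norm[B]{\bm S}$. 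Hence
\[
\sum_{x\in E}e^{S_{\sum n_i}(x)}\le \prod_{i=1}^k e^{C+\norm[B]{\bm S}}\sum_{f\in F_i}e^{S_{n_i}(f)}\le\prod_i e^{C+\norm[B]{\bm S}}Z_{n_i}(\eps/2).
\]
\Cref{lem:BoundZn} then replaces $Z_{n_i}(\eps/2)$ by $C_{\eps,\eps/2}Z_{n_i}(\eps)$, which yields $D_\eps$.

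\emph{Lower bound.} Take $(n_i,3\eps)$-separated sets $E_i$ that nearly realize $Z_{n_i}(3\eps)$. Apply quasiperiodic specification with precision $\delta\le\eps$ and gap $M=M(\delta)$. For each tuple $(x_1,\dots,x_k)\in\prod E_i$ this gives a point $z$ shadowing $x_i$ on consecutive blocks separated by gaps of length $M$, with the successive pieces glued along center plaques as in \Cref{lem:particionsumas}. Distinct tuples give $z$'s that are $(\sum n_i+kM,\eps)$-separated, because they differ by more than $3\eps-2\delta\ge\eps$ on some block. By \Cref{lem:particionsumas}, together with $|S_M|\le MC'$ from \Cref{lem:qmisbounded} on each gap and the quasicocycle defect,
\[
S_{\sum n_i+kM}(z)\ge\sum_i S_{n_i}(x_i)-k(3C+MC'+2C).
\]
Summing over tuples gives $Z_{\sum n_i+kM}(\eps)\ge\prod_i e^{-c}Z_{n_i}(3\eps)$.

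To remove the gaps, I use the upper-bound argument on the single block $[\sum n_i,\sum n_i+kM]$. This gives $Z_{m+kM}\le Z_m\cdot(\text{const})^k$, since $Z_M(\eps)\le e^{MC'}\cdot\#(\text{max }(M,\eps)\text{-sep})$ is bounded by a constant depending only on $\eps$. This yields $Z_{\sum n_i}(\eps)\ge \prod_i E'Z_{n_i}(3\eps)$. \Cref{lem:BoundZn} converts $Z_{n_i}(3\eps)$ back to $Z_{n_i}(\eps)$ up to a constant, and that gives $E_\eps$.

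\emph{Main obstacle.} The delicate step is the separation and gluing in the lower bound. The specification of \Cref{thm:qmspecification} only controls the junctions up to center plaques $B^c$. Plaque expansivity (via the Claim in \Cref{lem:BoundZn}) must be used to ensure that the distinct shadowing points really are separated and are not collapsed onto a common center plaque. The hypothesis $z_j\in B(x_j,n,\eps)$ with $T^{n_j}z_j\in B^c_{\gamma_0}(z_{j+1})$ of \Cref{lem:particionsumas} must also be arranged. If separation fails I would instead count modulo center plaques, losing at most the constant $M_0$ from \Cref{lem:BoundZn} per block, which is absorbed into $E_\eps$.
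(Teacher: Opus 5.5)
Your proposal is correct and follows the paper's approach: the paper just invokes Bowen's Lemma~2 together with \Cref{lem:BoundZn} and \Cref{lem:particionsumas}, and you carry out exactly Bowen's scheme adapted to quasicocycles. For the upper bound you use spanning sets at scale $\eps/2$, the quasicocycle defect and the Bowen property on each block, and then \Cref{lem:BoundZn}; for the lower bound you use specification on $(n_i,3\eps)$-separated sets, remove the $kM$ gaps via the upper bound, and return to scale $\eps$ with \Cref{lem:BoundZn}. The ``main obstacle'' you flag is not one: the plain specification property of \Cref{pro:especificacioncenterisometry} already shadows in the ambient metric, so your triangle-inequality separation ($3\eps-2\delta\geq\eps$) and the direct quasicocycle/Bowen estimate on the shadowing point are complete, with no center-plaque gluing or plaque expansivity needed in the lower bound (and \Cref{lem:particionsumas} with $z_j=T^{m_j}x$ is just the block-decomposition estimate of your upper bound).
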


The previous lemma in particular implies that $\exists P(\eps)=\lim_{n\to+\infty}\frac{1}{n}\log Z_n(\eps)$, We define the pressure of $(T,\bm{S})$ by
\begin{align}
P(\bm{S})=\lim_{\eps\to 0}P(\eps).
\end{align}
If $\bm{S}=(S_n\varphi=\sum_{k=0}^{n-1}\varphi\circ T^k)_n$ for some $\varphi \in \Cr{X}$, then $P(\bm{S})$ coincides with the topological pressure of $(T,\varphi)$.

\begin{remark}
	Our standing assumption implies in particular that $T$ is $h$-expansive \cite{Bowen_1972}, and thus there exists $\eps_0>0$ so that $\forall 0<\eps\leq \eps_0$, $P(\eps)=P(\eps_0)$. This can be proven copying the arguments of Theorem $2.4$ of \cite{Bowen_1972}.
\end{remark}

We continue following Bowen: the previous lemma now gives (cf. Lemma 3 of \cite{Bowen1974}):

\begin{proposition}\label{pro:boundzn}
	For every $\eps>0$ small  it holds 
	\[
	\frac{e^{nP}}{D_{\eps}}\leq Z_n(\eps)\leq \frac{e^{nP}}{E_{\eps}},
	\]
	where $P=P(\bm{S})$, and $D_{\eps}, E_{\eps}$ are given in the previous lemma.
\end{proposition}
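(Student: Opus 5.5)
The bounds will follow from the super/submultiplicativity in the previous lemma, which gives, for fixed $\eps$ and all $n_1,\dots,n_k\geq1$,
\[
\prod_{i=1}^k E_{\eps}Z_{n_i}(\eps)\leq Z_{n_1+\cdots+n_k}(\eps)\leq \prod_{i=1}^k D_{\eps}Z_{n_i}(\eps).
\]
The argument is the standard Fekete-type one from Lemma~3 of \cite{Bowen1974}. First we identify the exponential growth rate. Set $a_n=\log(D_\eps Z_n(\eps))$ and $b_n=\log(E_\eps Z_n(\eps))$. Taking $k=2$ shows that $(a_n)$ is subadditive and $(b_n)$ is superadditive. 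The lower bound in the lemma, applied with all $n_i=1$, gives $Z_n(\eps)>0$, and \Cref{lem:qmisbounded} gives $a_n\leq Cn$, so both sequences are finite. By Fekete's lemma,
\[
\lim_n \frac{a_n}{n}=\inf_n\frac{a_n}{n},\qquad \lim_n\frac{b_n}{n}=\sup_n\frac{b_n}{n}.
\]
Both limits equal $P(\eps)=\lim\frac1n\log Z_n(\eps)$, because the constants $\log D_\eps$ and $\log E_\eps$ disappear after dividing by $n$. For $\eps$ small this common value equals $P=P(\bm S)$, by the $h$-expansivity remark ($P(\eps)=P(\eps_0)$ for $\eps\leq\eps_0$).

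From $\inf_n a_n/n=P$ we get $a_n\geq nP$, that is, $D_\eps Z_n(\eps)\geq e^{nP}$, which is the lower bound. From $\sup_n b_n/n=P$ we get $b_n\leq nP$, that is, $E_\eps Z_n(\eps)\leq e^{nP}$, which is the upper bound.

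To avoid quoting Fekete, one can argue directly. Fix $n$ and apply the lemma with $k=m$ and all $n_i=n$. This gives $(E_\eps Z_n)^m\leq Z_{mn}\leq (D_\eps Z_n)^m$. Taking $\frac1{mn}\log$ and letting $m\to\infty$ gives $\frac1n\log(E_\eps Z_n)\leq P(\eps)\leq \frac1n\log(D_\eps Z_n)$, which is exactly the claim. This step uses that the limit $P(\eps)$ exists along the subsequence $mn$, which the previous paragraph established.

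No step is a real obstacle; the proof is routine once the lemma is available. The one point that needs care is making sure the $\eps$ used here is small enough that both the lemma and the $h$-expansivity remark hold, so that $P(\eps)$ may be replaced by $P(\bm S)$. The constants $D_\eps,E_\eps$ are the ones from the lemma.
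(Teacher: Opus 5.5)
Your proof is correct and is essentially the paper's argument: the paper invokes the preceding super/submultiplicativity lemma exactly as in Lemma~3 of \cite{Bowen1974}, which is your direct argument (take $k=m$ and all $n_i=n$, apply $\frac{1}{mn}\log$, and let $m\to\infty$), so the Fekete detour is not needed. The replacement of $P(\eps)$ by $P=P(\bm S)$ for small $\eps$ that you take from the $h$-expansivity remark also follows directly from \Cref{lem:BoundZn} together with the monotonicity of $Z_n(\eps)$ in $\eps$.
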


Now we want to obtain a similar bound replacing separated points by quasiperiodic ones. Fix $\delta>0$ small.

\begin{definition}
	A set $E$ consisting of $(n,\delta)$-quasiperiodic points verifying $x\neq y\in E\Rightarrow T^{k}x\not\in B_{3\delta}^c(y), \forall k=0,\ldots, n-1$ is called separated.
\end{definition}

From plaque expansiveness we get:

\begin{lemma}
Fix $\delta\leq \delta_0$. Then, for every $n$ there exists some $N_n(\delta)\in \N$ so that if $E$ consists of $(n,\delta)-$quasiperiodic points, and $\#E>N_n(\delta)$ then $\exists, x,y\in E$ so that $x\in B_{3\delta}^c(y)$.
\end{lemma}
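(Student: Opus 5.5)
The plan is a compactness argument that combines plaque expansiveness with the isometric behaviour of $T$ along $\FolW{c}$. Fix $n$ and $\delta\leq\delta_0$, where $\delta_0$ is small compared with $\Cexp$ (for instance $6\delta_0\leq\Cexp$). I will argue by contradiction. Suppose there are arbitrarily large sets $E$ of $(n,\delta)$-quasiperiodic points such that no two elements satisfy $x\in B^c_{3\delta}(y)$. By compactness of $X$, any such set with enough elements contains two distinct points $x,y$ with $\dis{x}{y}$ as small as we like. So it suffices to find $\eta=\eta(n,\delta)>0$ with the following property: whenever $x,y$ are $(n,\delta)$-quasiperiodic and $\dis{x}{y}<\eta$, then $x\in B^c_{3\delta}(y)$. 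Once $\eta$ is found, $N_n(\delta)$ can be taken to be the maximal cardinality of an $\eta$-separated subset of $X$, which is finite by compactness.

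To find $\eta$, first use continuity of $T,\dots,T^n$ to choose $\eta$ so small that $\dis{x}{y}<\eta$ implies $\dis[n+1]{x}{y}<\delta$. Next exploit quasiperiodicity: $T^nx\in B^c_\delta(x)$ and $T^ny\in B^c_\delta(y)$. Since $T$ preserves $\FolW{c}$ and acts isometrically on it, applying $T^{n}$ repeatedly moves each point by at most $\delta$ inside its center plaque at each step. Hence for all $k\in\Z$ the orbit point $T^{k}x$ stays within center distance $\delta$ of some $T^{j}x$ with $0\leq j\leq n$, and the same holds for $y$. I then want to conclude that $\dis{T^kx}{T^ky}\leq 3\delta\leq\Cexp$ for all $k\in\Z$. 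Writing $k=mn+j$, the natural way to obtain this is a shadowing-type comparison: the pair $(T^{mn}x,T^{mn}y)$ is obtained from the pair $(x,y)$ by a $T^{mn}$-translate that is close to a center-plaque holonomy, and the center plaques of nearby points are close to each other. Plaque expansiveness (the Proposition quoted from \cite{PartSurv}) then gives $y\in B^{c}_{3\delta}(x)$, which contradicts separation.

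The main obstacle is exactly this uniform-in-$k$ control. The difficulty is that center plaques of nearby points can drift apart under iteration along the stable and unstable directions. My first attempt would be to use the local product structure. Write $y$ as $x$ moved first along a short stable piece, then a short unstable piece, then a short center piece. The quasiperiodicity of both points forces the stable component to be bounded under forward iteration and the unstable component to be bounded under backward iteration. Because these components are contracted in one time direction and expanded in the other, this forces both components to be small, of size $O(\eta)$ uniformly in the number of periods. If this decomposition argument proves awkward, the fallback is the weaker statement the lemma actually needs: apply the Claim from \Cref{lem:BoundZn} (plaque expansiveness over a window of length $N(\eps')$) directly to orbit windows of the quasiperiodic segments, after enlarging $\eta$'s dependence on $n$. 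The finiteness of $N_n(\delta)$ only requires some $\eta>0$ depending on $n$, not a bound uniform in $n$.
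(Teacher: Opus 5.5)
You have the right reduction, and it is also all the paper intends (it only says the lemma follows from plaque expansiveness). By compactness, it suffices to find $\eta=\eta(n,\delta)>0$ such that any two $(n,\delta)$-quasiperiodic points with $d(x,y)<\eta$ satisfy $y\in B^c_{3\delta}(x)$. However, you never prove this key claim, and the routes you sketch for it fail.

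\emph{(i) The drift claim is false.} The assertion that every $T^kx$ lies within center distance $\delta$ of some $T^jx$, $0\le j\le n$, does not hold. Center displacements accumulate, and $T^{mn}x$ is only within $m\delta$ of $x$ along $\FolW{c}(x)$. Take $T=A\times R_\theta$ on $\mathbb T^2\times S^1$ and $x=(p,s)$ with $p$ of minimal $A$-period $n$ and $\|n\theta\|=\delta/2$. Then $T^{4n}x$ is at center distance $2\delta$ from $x$ and $3\delta/2$ from $T^nx$, and every other $T^jx$ lies on a different center leaf. So no bound on $d(T^kx,T^ky)$ for all $k$ follows from it.

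\emph{(ii) The product-structure sketch has the time directions reversed.} Stable components are automatically bounded in forward time and unstable ones in backward time, so nothing is forced. What you need is the opposite. Getting it requires comparing the transverse displacement of $(T^nx,T^ny)$ with that of $(x,y)$ after sliding back along center plaques by up to $\delta$. That in turn needs Lipschitz control of center holonomy between stable (unstable) plaques, with constant below the hyperbolic gain, which is not available in general. With only H\"older holonomy (exponent $\theta<1$), an inequality $\lambda^{-n}r\le Kr^\theta$ for the displacement $r$ yields nothing when $r$ is small.

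\emph{(iii) The fallback cannot work in principle.} Plaque expansiveness over a finite window only places $y$ near a center plaque of $x$, never on one. But $y\in B^c_{3\delta}(x)$ is an exact membership.

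The missing idea is to use plaque expansiveness in its original Hirsch--Pugh--Shub form, for $\eps$-pseudo-orbits respecting the center plaques, rather than the genuine-orbit form of \Cref{def:plaqueexp}. Set $p_k=T^{k\bmod n}x$ and $q_k=T^{k\bmod n}y$ for $k\in\Z$. These are $\delta$-pseudo-orbits respecting $\FolW{c}$: the only jumps are from $T^nx$ to $x\in B^c_\delta(T^nx)$ and from $T^ny$ to $y\in B^c_\delta(T^ny)$. Choose $\eta$ by uniform continuity of $T,\dots,T^{n-1}$; then $d(p_k,q_k)$ is small for every $k\in\Z$ simultaneously, so there is no drift to control. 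Pseudo-orbit plaque expansiveness then puts $x$ and $y$ in a common small plaque, and since $d(x,y)<\eta$ this gives $y\in B^c_{3\delta}(x)$.

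This form holds, for example, when the center foliation is $C^1$. That is the case in the homogeneous application, where center leaves are orbits of $Z_{G'}(a)^\circ$. There you can also argue directly by discreteness:
\begin{itemize}
\item Write $x=\Gamma'g$ and $y=xh$ with $h$ small, and let $xa^n=xc$, $ya^n=yc'$ with $c,c'$ small in $Z_{G'}(a)^\circ$.
\item The return elements $ga^nc^{-1}g^{-1}$ and $gha^nc'^{-1}h^{-1}g^{-1}$ of $\Gamma'$ are close for $\delta$ small and $\eta$ small depending on $n$, hence equal.
\item Since $\Ad(a^nc'^{-1})$ is hyperbolic on $\lie g^s\oplus\lie g^u$, this forces $h\in Z_{G'}(a)^\circ$.
\end{itemize}
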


We denote by $N_n(\delta)$ the cardinal of any separated set of $(n,\delta)$-quasiperiodic points with maximal number of elements.

\begin{proposition}
For any $0<\delta\leq\delta_0$ there exists some constants $C_{\delta}, D_{\delta}>0$ so that
\[
	C_{\delta}e^{nh}\leq N_n(\delta)\leq D_{\delta}e^{nh}, 
\]
where $h=\htop(T)$.	
\end{proposition}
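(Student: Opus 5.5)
The plan is to compare separated sets of quasiperiodic points with $(n,\eps)$-separated sets, and then use \Cref{pro:boundzn} with $\bm S\equiv 0$. In that case $Z_n(\eps)$ is the maximal cardinality of an $(n,\eps)$-separated set, and the pressure is $P(0)=\htop(T)=h$. Since $T$ is $h$-expansive, the quantity $P(\eps)$ is independent of $\eps$ for small $\eps$. We therefore already have $e^{nh}/D_\eps\le Z_n(\eps)\le e^{nh}/E_\eps$. It remains to prove two comparisons: $N_n(\delta)\lesssim Z_n(\eps)$ for suitable $\eps$, and $Z_n(\eps)\lesssim N_n(\delta)$, each up to multiplicative constants independent of $n$.

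\emph{Upper bound.} Let $E$ be a maximal separated set of $(n,\delta)$-quasiperiodic points. I partition $E$ according to a maximal $(n,\eps)$-separated set $F$ with $\eps\ll\delta$, assigning to each $x\in E$ a point $a(x)\in F$ with $x\in B(a(x),n,\eps)$, as in the proof of \Cref{lem:BoundZn}. Suppose $x,y\in E_a$. Then $d_n(x,y)\le 2\eps$. Since $T^nx\in B^c_\delta(x)$ and $T^ny\in B^c_\delta(y)$, and $T$ is isometric on the center, the orbits stay $C\eps$-close beyond time $n$: we can continue periodically, shifting by center plaques. By plaque expansivity (the Claim in \Cref{lem:BoundZn}) this gives $y\in B^c_{3\delta}(x)$, up to the boundary effects handled by the constant $\alpha$ there. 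Separatedness then forbids two distinct such points. The expected outcome is $\#E_a\le M$ with $M$ independent of $n$, hence $N_n(\delta)\le M Z_n(\eps)\le (M/E_\eps)e^{nh}$.

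\emph{Lower bound.} Let $F$ be a maximal $(n,\eps)$-separated set. For each $x\in F$, I apply \Cref{thm:qmspecification} to the single orbit segment $x,\dots,T^{n-1}x$. This produces a $(n+M(\eps'),\delta)$-quasiperiodic point $z_x$ that $\eps'$-shadows it, with $\eps'\ll\eps$. The gap $M=M(\eps')$ is a fixed constant. Comparing $N_{n+M}$ with $N_n$ costs at most a factor $e^{Mh}$ after reindexing, and a crude bound via \Cref{lem:BoundZn}-type arguments absorbs this loss. For $x\ne x'$ in $F$, the points $z_x,z_{x'}$ are $(n,\eps-2\eps')$-separated. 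A separated family of quasiperiodic points can contain at most boundedly many $z_x$ that lie on a common center plaque up to an iterate $T^k$, because center plaques of radius $3\delta$ contain only boundedly many $(n,\eps/2)$-separated points, by the $M_0$ count in \Cref{lem:BoundZn}. A greedy selection then yields a separated set of size at least $Z_n(\eps)/(nM_0')$.

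This last point is the main obstacle. The condition ``$T^k x\notin B^c_{3\delta}(y)$ for some $k\in\{0,\dots,n-1\}$'' identifies points up to cyclic shifts along the quasi-orbit. A naive count therefore loses a factor $n$, since each quasiperiodic orbit contributes up to $n$ shadowing points. I would handle this by reading the definition of $N_n(\delta)$ as counting quasiperiodic orbit segments, exactly as in Bowen's periodic-point count, where $\#\mathrm{Fix}(T^n)$ counts points rather than orbits. Alternatively I would choose the $z_x$ as distinct starting points along orbits, making the greedy count orbit-insensitive. I would first try to show that the shadowing map $x\mapsto z_x$ is at most $M_0$-to-one modulo $B^c_{3\delta}$ at time $0$ only. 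I would then check that the shift condition in the definition of separatedness was intended at the same iterate, since the paper's wording is slightly ambiguous. This gives $N_n(\delta)\ge C_\delta e^{nh}$.
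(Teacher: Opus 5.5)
Your plan is the paper's. The upper bound compares separated quasiperiodic sets with $(n,\eps)$-separated sets and applies \Cref{pro:boundzn} with $\bm S\equiv 0$. The lower bound shadows a maximal separated set by quasiperiodic points from \Cref{thm:qmspecification} and absorbs the gap $M$ by reindexing.

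The genuine gap is the lower bound, exactly where you place it. You never produce a separated family of size $C_\delta e^{nh}$: your greedy count gives only $Z_n(\eps)/n$, and that is in fact the true order. With the definition as written, no repair exists, because the inequality is false. The plaque condition is imposed for every $k=0,\dots,n-1$, so points differing by a shift along a quasiperiodic orbit are identified, and $N_n(\delta)$ counts orbits rather than points. Choosing other starting points along orbits cannot change this.

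Concretely, let $T=\phi_1$ be the time-one map of the geodesic flow of a closed hyperbolic surface. This is a center isometry with minimal unstable (horocycle) foliation and $h=1$. For small $\delta$, its $(n,\delta)$-quasiperiodic points are exactly the points of closed orbits whose prime period $\tau$ satisfies $|m\tau-n|<\delta$ for some integer $m\ge1$.
\begin{itemize}
\item When $m=1$, each point of a separated set on such an orbit can be moved by some $T^k$, $0\le k\le n-1$, into a fixed arc of length $1+\delta$. The moved points are $3\delta$-apart there: for $k\ge k'$, apply $T^{k'}$ to $T^{k-k'}y\notin B^c_{3\delta}(y')$. So each orbit contributes $O(1/\delta)$ points, and the prime geodesic theorem gives $O(e^{n}/n)$ such orbits.
\item Orbits with $m\ge2$ are $O(e^{n/2})$ in number and contribute $O(n/\delta)$ points each.
\end{itemize}
Hence $N_n(\delta)=O_\delta(e^{n}/n)$, and no $C_\delta$ exists. (Already a mixing Anosov diffeomorphism, viewed as a center isometry with trivial center, has $N_n(\delta)$ equal to the number of periodic orbits of period dividing $n$, again $O(e^{nh}/n)$.)

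The paper's proof has the same hole. It asserts without argument that the shadows $p(x)$ of a maximal $(n-M,3\delta_0)$-separated set form a separated set. These shadows are pairwise distinct and at least $c\,e^{nh}$ in number, so by the count just given they cannot be separated for large $n$.

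Your reinterpretation proves a correct variant, not this statement. Counting quasiperiodic orbit segments is what the stated definition already does, and it gives $e^{nh}/n$. What gives $e^{nh}$ is counting points, as $\#\mathrm{Fix}(T^n)$ does, i.e.\ imposing the plaque condition only at $k=0$. That is the form used in the lemma just before the proposition.

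Under that reading your shadowing step works. If $z_{x'}\in B^c_{3\delta}(z_x)$, then $d(T^iz_x,T^iz_{x'})<3\delta$ for all $i$, hence $d_n(x,x')<3\delta+2\eps'$. So $x\mapsto z_x$ is injective modulo plaques once $\eps\ge 3\delta+2\eps'$.

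For the upper bound, your bounded-multiplicity argument is the right form. The paper's claim that a separated $E$ is itself $(n,3\delta_0)$-separated fails under either reading: take $x$ and $\phi_{4\delta}x$ on one closed orbit with $4\delta\le 3\delta_0$. However, your step ``the orbits stay $C\eps$-close beyond time $n$'' is unjustified. Isometry along the center only gives $d(T^{n+j}x,T^{n+j}y)\le d(T^jx,T^jy)+2\delta$. To get $y\in B^c_{3\delta}(x)$ you must work with the stable and unstable components of the displacement of $y$ from $x$. The $d_n$-closeness makes these exponentially small at one end of the segment, and quasiperiodicity has to carry that smallness to the other end.
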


\begin{proof}
 	If $E=\{x_i:i=1,\ldots, N_n(\delta)\}$ consists of $(n,\delta)$-quasiperiodic points, then it is $(n,3\delta_0)$-separated, hence
	\[
	N_n(\delta)\leq s_n(3\delta_0)\leq D_{3\delta_0}e^{nh}\leq D_{\delta}'e^{nh}. 
	\]
	Conversely, consider $M=M(\delta)$ the constant given by the specification property, and let $E$ be a maximally $(n-M,3\delta_0)$-separated set. For every $x\in E$ choose $p(x)$ an $(n,\delta)$-quasiperiodic point so that 
	\[
	\dis{T^ix}{T^ip(x)}\leq \delta, i=0,\ldots, n-M-1.
	\]
	The set $\tilde E=\{p(x):x\in E\}$ is separated, hence $N_{n}(\delta)\geq s_{n-M}(3\delta_0)\geq C_{\delta} e^{nh}$. 
\end{proof}

\begin{remark}
	 If $E_n$ is a maximally separated set consisting of $(n,\delta)$-quasiperiodic points, then $T(E_n)$ is as well. Since $T$ is a center isometry, and $\bm{S}$ has the Bowen property, it follows that there exists some uniform $C>0$ so that for any $(n,\delta)$-quasiperiodic point $x$, and $k=0,\ldots, n$
	 \[
	 |S_n(x)-S_n(T^kx)|\leq C.
	 \]
\end{remark}

Fix some family $\ms E=\{E_n:n\in\N\}$, where $E_n$ is a maximally separated set consisting of $(n,\delta)$-quasiperiodic points: we call such family $\delta$-quasiperiodic family. For $\ms E$, define
\[
	Z_n(\ms E)=\sum_{x\in E_n} e^{S_n(x)}.
\]

The same proof as above shows the following.

\begin{corollary}
	For any $\delta\leq\delta_0$ there exists some constants $C_{\delta}, D_{\delta}>0$ so that
	\[
	C_{\delta}e^{nP}\leq Z_n(\ms E)\leq D_{\delta}e^{nP}, 
    \]
    In particular, if $\ms E,\ms E'$ are $\delta$-quasiperiodic families, then $\left(\frac{Z_n(\ms E)}{Z_n(\ms E')}\right)_n$ is uniformly bounded, from above and below. 
\end{corollary}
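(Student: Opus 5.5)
We prove the corollary by redoing the proof of the preceding proposition on the counts $N_n(\delta)$, with each point now carrying the weight $e^{S_n(x)}$. The growth rate $h$ gets replaced by $P$ through \Cref{pro:boundzn}. The Bowen property lets us pass between nearby orbit segments at bounded multiplicative cost $e^{\norm[B]{\bm S}}$.

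\emph{Upper bound.} Let $E_n\in\ms E$. As in the proposition, a separated set of $(n,\delta)$-quasiperiodic points is $(n,3\delta_0)$-separated once $\delta\leq\delta_0$. Hence
\[
Z_n(\ms E)=\sum_{x\in E_n}e^{S_n(x)}\leq Z_n(3\delta_0)\leq \frac{e^{nP}}{E_{3\delta_0}}
\]
by \Cref{pro:boundzn}, provided $3\delta_0$ lies in the range where that proposition applies. If it does not, we first pass to a smaller scale using \Cref{lem:BoundZn}, which changes only the constant.

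\emph{Lower bound.} Let $M=M(\delta)$ be the specification constant. Take $F$ to be an $(n-M,3\delta_0)$-separated set that nearly realizes $Z_{n-M}(3\delta_0)$, say $\sum_{x\in F}e^{S_{n-M}(x)}\geq \tfrac12 Z_{n-M}(3\delta_0)$.
\begin{enumerate}
\item By \Cref{thm:qmspecification}, each $x\in F$ has an $(n,\delta)$-quasiperiodic point $p(x)$ that $\delta$-shadows $x$ for $n-M$ steps.
\item The Bowen property gives $|S_{n-M}(p(x))-S_{n-M}(x)|\leq\norm[B]{\bm S}$. The quasicocycle property together with \Cref{lem:qmisbounded} gives $|S_n(p(x))-S_{n-M}(p(x))|\leq MC'+C$. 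So $e^{S_n(p(x))}\geq c_\delta\, e^{S_{n-M}(x)}$ with $c_\delta$ independent of $n$.
\item The set $\{p(x)\}$ is separated in the quasiperiodic sense, exactly as in the proposition. We compare it with the maximal separated set $E_n$.
\end{enumerate}

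The comparison in step 3 is the main obstacle. $E_n$ is maximal only in cardinality, not in weight, so its sum could a priori be smaller than that of $\{p(x)\}$. The plan is to use maximality to match each $p(x)$ with some $y\in E_n$ satisfying $T^kp(x)\in B^c_{3\delta}(y)$ for some $0\leq k<n$. If no such $y$ existed, $p(x)$ could be added to $E_n$, contradicting maximality.

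For a matched pair, the remark preceding the corollary does the work. Center isometry means nearby center-related points stay center-related along the whole segment, so $S_n(T^kp(x))$ is within a uniform constant of $S_n(y)$ by the Bowen property. Quasiperiodicity and the remark then give $|S_n(T^kp(x))-S_n(p(x))|\leq C$.

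To turn this into a lower bound on $Z_n(\ms E)$, we bound how many $p(x)$ can be matched to a single $y$. Such points lie in $\bigcup_k T^{-k}B^c_{3\delta}(y)$ and are pairwise $(n,3\delta_0)$-separated. Since $T$ acts isometrically along centers, one would like to say that only a bounded number of them lie in each translate, by the counting constant $M_0$ from \Cref{lem:BoundZn}. However, the union runs over $n$ values of $k$, so a naive count gives a factor $n$. To remove it we would additionally use the orbit-invariance noted in the remark, that $T(E_n)$ is again maximal, and choose $k$ canonically. We expect this to be the delicate point.

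Granting this multiplicity bound, we get $Z_n(\ms E)\geq c'_\delta Z_{n-M}(3\delta_0)\geq c''_\delta e^{nP}$. The ratio statement for two families $\ms E,\ms E'$ follows immediately from the two-sided bounds.
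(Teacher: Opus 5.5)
Your upper bound is fine and is the paper's argument. The lower bound breaks exactly at the multiplicity bound you grant, and choosing $k$ canonically will not repair it: the factor $n$ is genuine. It traces back to a false assertion in your step 3, that $\{p(x):x\in F\}$ is separated in the quasiperiodic sense.

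By definition, a separated set contains at most one point of each quasiperiodic orbit segment (up to $B^c_{3\delta}$), since $y=T^kx$ with $0<k<n$ is excluded. A maximal $(n-M,3\delta_0)$-separated set $F$, by contrast, contains approximately all $n$ rotations $T^kx$ of a typical quasiperiodic segment. Their specification shadows are again rotations of one another. If step 3 were true, the multiplicity bound would be immediate: two shadows matched to the same $y$ would be $6\delta$-close along the center after a shift, contradicting separation up to constants. Since step 3 is false, up to about $n$ shadows can be matched to a single $y$.

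In fact the lower bound $Z_n(\ms E)\ge C_\delta e^{nP}$ is false as stated. Take $S_n\equiv0$, so that $P=h_{\mathrm{top}}(T)$ and $Z_n(\ms E)=\#E_n$. Consider the time-one map of the geodesic flow of a compact hyperbolic surface. It is a center isometry with minimal (horocyclic) unstable foliation and $h_{\mathrm{top}}=1$. Its $(n,\delta)$-quasiperiodic points lie on closed geodesics of length within $\delta$ of $n$, ignoring negligible non-primitive ones. The prime geodesic theorem gives about $2\delta e^n/n$ such geodesics, each carrying $O(1/\delta)$ points of a separated set. Hence $\#E_n\asymp e^n/n$. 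If $E^c=0$ is admitted, a hyperbolic toral automorphism is even simpler: $E_n$ is a set of representatives of periodic orbits, of cardinality about $e^{nh}/n$.

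Your naive count gives boundedly many shadows per value of $k$, by center isometry and $(n-M)$-separation. It therefore proves the correct-order bound $Z_n(\ms E)\ge c_\delta e^{nP}/n$.

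The paper's proof of this corollary is only ``the same proof as above'', i.e. the argument for $N_n(\delta)$. That argument rests on the same unjustified line, ``the set $\tilde E=\{p(x)\}$ is separated''. So the defect is in the source, not only in your write-up. The natural repair is to normalize by $e^{nP}/n$ and prove a matching upper bound $Z_n(\ms E)\le D_\delta e^{nP}/n$. This can be done by summing over all rotations $T^kE_n$ and controlling points of short quasiperiod, and it would also recover the ratio statement.
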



For $x\in X$ denote $\lam_x=\Leb(\cdot|B_{6\delta}^c(x))$ (to simplify we omit the dependence with respect to $\delta$). Consider the probability measures
\[
	\mu^{\ms E}_n=\frac{1}{Z_n(\ms E)}\sum_{x\in E_n} e^{S_n}\lam_x
\]
Fix also any weak accumulation point $\mu^{\ms E}$ of $\{\mu_n^{\ms E}\}$.

\begin{lemma}
 	There exists some constant $C_{\eps,\delta}>0$ which is independent of the family $\ms E$ and of $\mu^{\ms E}$, and so that
 	\[
 	(\forall x\in X, 0<\eps<\delta),\ \mu^{\ms E}\left(B(x,n,\eps)\right)\geq  C_{\eps,\delta} e^{S_n(x)-nP}
 	\]
 \end{lemma}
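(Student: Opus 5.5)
We will prove the Gibbs lower bound by estimating $\mu^{\ms E}_m\big(B(x,n,\eps)\big)$ for all $m$ large, uniformly in $m$, and then passing to the weak limit. To make the limit step legitimate we first replace $B(x,n,\eps)$ by a slightly smaller open set, say the open Bowen ball of radius $\eps/2$. Its measure is bounded above by $\mu^{\ms E}(B(x,n,\eps))$, and by the portmanteau theorem it is at least $\limsup_m \mu^{\ms E}_{m}(\cdot)$ along the subsequence defining $\mu^{\ms E}$. Hence it suffices to show
\[
\mu^{\ms E}_m\big(B(x,n,\eps/2)\big)\geq C\, e^{S_n(x)-nP}
\qquad\text{for all } m\geq m_0(n),
\]
with $C$ depending only on $\eps,\delta$.

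The estimate for fixed $m$ comes from the quasiperiodic specification property (\Cref{thm:qmspecification}), applied with a small parameter $\eps'\ll\eps$ and $M=M(\eps')$. Take a maximal $(m-n-2M,\eps')$-separated set $F$; the leftover length is absorbed by the $kM$ term in the definition of quasiperiodic specification. For each $y\in F$, specification gives a point $z_y$ with two properties: it $\eps'$-shadows $x$ during times $[0,n)$ and $y$ during the following block, and it is $(m,\eps')$-quasiperiodic.

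\begin{itemize}
\item \emph{Weight of $z_y$.} By \Cref{lem:particionsumas} together with the quasicocycle property (to absorb the $M$ gap steps, using \Cref{lem:qmisbounded}), $S_m(z_y)\geq S_n(x)+S_{m-n-2M}(y)-C_1$.
\item \emph{Distinct points.} Points $z_y$ with distinct $y$ are separated in the quasiperiodic sense, because they shadow different $y$'s.
\item \emph{Replacement by $E_m$.} Since $E_m$ is maximally separated among $(m,\delta)$-quasiperiodic points, each $z_y$ lies in $B^c_{3\delta}(T^kw)$ for some $w\in E_m$ and some $0\le k<m$. By plaque expansiveness and the bound on $N_m$, each $w$ is hit by at most a uniformly bounded number of $z_y$. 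We use the Remark on quasiperiodic points (cyclic shifts change $S_m$ by at most $C$), and the fact that $T$ acts isometrically on center plaques, so center-close points stay close. These give a point of $E_m$ whose center plaque $B^c_{6\delta}$ contains $z_y$, with comparable weight.
\item \emph{Mass in the Bowen ball.} A definite proportion of the $\lam_w$-measure of that plaque lies in $B(x,n,\eps/2)$. This is where $\eps<\delta$ and the uniform comparability of center Lebesgue measures are used.
\end{itemize}

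Summing, we get
\[
\mu^{\ms E}_m\big(B(x,n,\eps/2)\big)\geq \frac{c}{Z_m(\ms E)}\,e^{S_n(x)}\sum_{y\in F}e^{S_{m-n-2M}(y)}.
\]
We choose $F$ to realize, up to a constant, $Z_{m-n-2M}(\eps')$. Then \Cref{pro:boundzn} bounds the sum below by $c\,e^{(m-n)P}$, and the Corollary bounds $Z_m(\ms E)\le D_\delta e^{mP}$. The $e^{mP}$ factors cancel, leaving $C e^{S_n(x)-nP}$.

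The main obstacle is the third bullet: controlling the multiplicity of the map $z_y\mapsto w$. We also need to guarantee that the center plaque $B^c_{6\delta}(w)$ really contains a set of definite $\lam_w$-mass inside the Bowen ball of $x$, not merely a point. We will first try to use that center distances are preserved by $T$. This should show that the whole center plaque of radius $\eps/4$ around $z_y$ lies in $B(x,n,\eps/2)$, giving a lower mass bound independent of $n$ and $m$.
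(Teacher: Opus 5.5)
\textbf{Verdict.} Your outline follows the paper's own proof: specification produces quasiperiodic points $z_y$ that follow $x$ and then $y$; maximality of $E_m$ replaces them by elements of $E_m$; the Bowen property and the uniform center-Lebesgue mass give the weights; one divides by $Z(\ms E)$ and takes a weak limit. Two steps fail as written.

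\textbf{The limit step.} The portmanteau inequality goes the other way for open sets: weak convergence only gives $\mu^{\ms E}(U)\le\liminf_m\mu^{\ms E}_m(U)$ (take $\delta_{1/m}\to\delta_0$ and $U=(0,1)$). So lower bounds on an open Bowen ball do not pass to the limit. You need a closed set, for instance $B(x,n,\eps)$ itself, which is defined with $\le$. The inequality $\mu^{\ms E}(B)\ge\limsup_m\mu^{\ms E}_m(B)$ for closed $B$ is exactly what the paper uses. This one is easily fixed.

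\textbf{The genuine gap: your third bullet, and the second bullet it relies on.} Maximality only gives $z_y\in B^c_{3\delta}(T^kw)$ for some $w\in E_m$ and an uncontrolled shift $k$.
\begin{itemize}
\item The cyclic-shift Remark compares $S_m(T^kw)$ with $S_m(w)$, but it does not move mass. The measure $\mu^{\ms E}_m$ charges only the plaques $B^c_{6\delta}(w)$ with $w\in E_m$, so the weight of $w$ can sit far from $B(x,n,\eps/2)$.
\item The multiplicity is not bounded. Two $d_m$-separated points $z_y,z_{y'}$ can be cyclic shifts of each other modulo the center, so they are not separated in the quasiperiodic sense. Already for an Anosov map with zero potential there are about $e^{(m-n)h}$ points $z_y$ but only about $e^{mh}/m$ periodic orbits of period $m$. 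By pigeonhole, on average each orbit class absorbs about $m\,e^{-nh}$ of the $z_y$, which grows with $m$.
\item Separately, with $F$ separated and shadowed at the same scale $\eps'$, the $z_y$ need not even be distinct. Separate at $3\eps'$ instead.
\end{itemize}

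\textbf{The paper has the same hole.} Its proof passes over exactly this point with ``it is no loss of generality to assume that $p(y)\in B^c_{3\delta}(q(y))$''. For a single family this cannot be repaired, because representatives may be slid along their orbits. Take an Anosov map: the center is trivial, so quasiperiodic means periodic and $\lambda_w=\delta_w$. Choosing in each periodic orbit the point nearest a fixed $x_0$ gives $\mu^{\ms E}=\delta_{x_0}$, which violates the lower bound.

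\textbf{What works.} Prove the bound for the shift averages $\frac1m\sum_{j<m}T^j_*\mu^{\ms E}_m$, which is what the subsequent Corollary actually uses.
\begin{itemize}
\item The point $z_y$ is charged by the $k$-th term, since $T^k_*\lambda_w=\lambda_{T^kw}$.
\item Each pair $(w,k)$ is hit boundedly often, since $d_m$-separated points in one center plaque are separated in center distance.
\item The resulting factor $1/m$ must be absorbed by the sharper estimate $Z_m(\ms E)\le C e^{mP}/m$. This is the correct order, because $E_m$ contains essentially one point per quasiperiodic orbit.
\end{itemize}
With only the bound $Z_m(\ms E)\le D_\delta e^{mP}$ that you quote, you end with $e^{S_n(x)-nP}/m$. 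The same count shows that the lower bound $Z_m(\ms E)\ge C_\delta e^{mP}$ asserted in the paper is too optimistic.
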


\begin{proof}
Fix $R_m$ a maximally $(m,3\eps)$-separated set. By the quasiperiodic specification property, for every $y\in R_m$ there exists some $(n+m+2M,\eps)$-quasiperiodic point $p(y)\in B(x,n,\eps)$ so that $T^{n+M}p(y)\in B(y,m,\eps)$: note that $y\mapsto p(y)$ is injective. By maximality of $E_{n+m+2M}$, there exists $0\leq k(y)\leq  n+m+2M-1$ and $q(y)\in E_{n+m+2M}$ so that $p(y)\in B^c_{3\del}(T^{k(y)}q(y))$: it is no loss of generality to assume that 
$p(y)\in B^c_{3\del}(q(y))$ and therefore
\begin{align*}
&q(y)\in B(x,n,3\delta+\eps)\\
&T^{n+M}q(y)\in B(y,m,3\delta+\eps).
\end{align*}
If $q(y)=q(y')$, then necessarily $p(y)\in B^c_{6\del}(p(y'))$. We can thus consider $R_m' \subset R_m$ so that $y\in R_m'\mapsto q(y)$ is injective of maximal size, and deduce that for some constant $c$ independent of $n,\delta$, $\#R_m'\geq c \#R_m$. From the Bowen property of $\bm{S}$ it follows that
\[
	\sum_{q(y):y\in R_m'}e^{S_m(T^{n+M}q(y))}\geq ce^{-C}\sum_{y\in R_m}e^{S_m(y)}\geq C_{\eps} Z_m(\eps). 
\]

This in turn says that  
\begin{align*}
\MoveEqLeft Z_{n+m+2M}(\ms E)\cdot\mu^{\ms E}_{n+m+2M}(B(x,n,\eps))\\
&\geq \frac{\min\{\Leb(B_{c}(z,\eps)):z\in X\}}{\max\{\Leb(B_{c}(z,\eps)):z\in X\}}e^{-C}\sum_{q(y):y\in R_m'} e^{S_m(T^{n+m}q(y))}\\
&\geq C_{\eps,\delta}' Z_m(\ms E)  
\end{align*}
\[
\Rightarrow \mu^{\ms E}_{n+m+2M}(B(x,n,\eps))\geq C_{\eps,\delta} e^{S_n(x)-nP}.	
\]

From here, using that $B(x,n,\eps)$ is closed, it follows the claim.
\end{proof}

\begin{lemma}
	For $\delta>0$ sufficiently small, there exists $K_{\delta}>0$ so that 
	\[
 	(\forall x\in X, 0<\eps<\delta),\ \mu^{\ms E}\left(B(x,n,\eps)\right)\leq  K_{\delta} e^{S_n(x)-nP}
 	\]
\end{lemma}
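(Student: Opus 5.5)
We prove the upper Gibbs bound by the usual Bowen counting argument: bound $\mu^{\ms E}_N(B(x,n,\eps))$ for every large $N$ and pass to the limit. Since $B(x,n,\eps)$ is closed, weak convergence only controls the measure of open sets from above. We therefore work with the slightly larger open set $U=\{y:\dis[n]{x}{y}<2\eps\}$, which contains $B(x,n,\eps)$, and use $\mu^{\ms E}(B(x,n,\eps))\le\mu^{\ms E}(U)\le\liminf_N\mu^{\ms E}_N(U)$ along the chosen subsequence. So it suffices to show that for $N\ge n$,
\[
\mu^{\ms E}_N(U)\le K_\delta\, e^{S_n(x)-nP}.
\]

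\emph{Step 1.} For each $z\in E_N$, the measure $\lam_z$ is normalized Lebesgue measure on the plaque $B^c_{6\delta}(z)$. Since $T$ is an isometry along $\FolW{c}$, $\lam_z(U)>0$ forces some $w\in B^c_{6\delta}(z)$ with $\dis[n]{w}{x}<2\eps$, and $T^iw\in B^c_{6\delta}(T^iz)$ for all $i$. Hence $\dis[n]{z}{x}\le 2\eps+6\delta\le 8\delta$. Writing
\[
E_N(x)=\{z\in E_N:\ \dis[n]{z}{x}\le 8\delta\},
\]
we get
\[
Z_N(\ms E)\,\mu^{\ms E}_N(U)\le \sum_{z\in E_N(x)} e^{S_N(z)}.
\]

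\emph{Step 2 (splitting the sums).} By the quasicocycle property, $S_N(z)\le S_n(z)+S_{N-n}(T^nz)+C_0$. By the Bowen property, $S_n(z)\le S_n(x)+C_1$; here we take $\delta$ small so that $8\delta\le\eps_0$, or if needed use a finite chain of $\eps_0$-Bowen balls, at the cost of a uniform constant. Therefore
\[
\sum_{z\in E_N(x)} e^{S_N(z)}\le e^{S_n(x)+C_0+C_1}\sum_{z\in E_N(x)}e^{S_{N-n}(T^nz)}.
\]
It then remains to show that the last sum is at most $C\,Z_{N-n}(\eps)\le C' e^{(N-n)P}$, by \Cref{pro:boundzn}. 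Combined with the lower bound $Z_N(\ms E)\ge C_\delta e^{NP}$ from the Corollary, this yields the claimed estimate with $K_\delta$ independent of $x$, $n$ and $\eps$.

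\emph{Step 3 (main obstacle).} We must show that $\{T^nz: z\in E_N(x)\}$ is, up to bounded multiplicity, $(N-n,\eps)$-separated. The points of $E_N$ are separated only in the quasiperiodic sense: $T^kz\notin B^c_{3\delta}(z')$ for $z\ne z'$. The plan is to argue by contradiction.

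Suppose $\dis[N-n]{T^nz}{T^nz'}\le\eps$. Since $z,z'\in E_N(x)$, we also have $\dis[n]{z}{z'}\le16\delta$, so the two orbits are close for all times $0,\dots,N-1$. Quasiperiodicity brings $T^Nz$ back into $B^c_\delta(z)$, and likewise for $z'$, so the closeness extends to a window that is two-sided around time $0$. Plaque expansivity (\Cref{def:plaqueexp}, applied with $16\delta+2\delta\le\Cexp$) then places $z'$ in $B^c_{C\delta}(z)$.

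Finally, a bounded number of $3\delta$-plaques cover a $C\delta$-plaque, and distinct points of $E_N$ cannot share a $3\delta$-plaque. This gives a uniform bound $M_0$ on the multiplicity, exactly as in \Cref{lem:BoundZn}, and yields
\[
\sum_{z\in E_N(x)} e^{S_{N-n}(T^nz)}\le M_0\, e^{\norm[B]{\bm S}}\, Z_{N-n}(\eps).
\]

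The delicate point is controlling the quasiperiodic closing in both time directions. If this direct argument fails, I would first replace $T^nz$ by its nearest point in a maximal $(N-n,\eps)$-separated set, and use \Cref{lem:BoundZn} to absorb the multiplicity into the constant.
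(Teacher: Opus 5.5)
Your argument is essentially the paper's. Both bound the approximating measures of a slightly enlarged Bowen ball by summing over the points of $E_N$ whose $6\delta$-center plaques meet it, control the first $n$ steps of $S_N$ by the Bowen property against $S_n(x)$, count the remaining steps as a separated set up to center-plaque multiplicity to get $e^{(N-n)P}$, and divide by $Z_N(\ms E)\geq C_\delta e^{NP}$.

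There are two small points. First, separating at scale $\eps$ makes your constant depend on $\eps$ through $Z_{N-n}(\eps)$. To fix this, separate at a scale comparable to $\delta$, as the paper does with $13\delta$, or run the argument once with radius $\delta$ and use $B(x,n,\eps)\subset B(x,n,\delta)$. Second, getting a common center plaque from closeness on the finite window $[-N,N-1]$ is not literally the bi-infinite plaque expansivity of \Cref{def:plaqueexp}; it needs a closing argument using the local product structure. The paper's proof glosses over this step in exactly the same way.
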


\begin{proof}
Take $M=M(\delta)$ the specification constant. Consider $0<\eps<\frac{\delta}{6}$ so that $\dis{x}{y}<6\eps$ implies $\dis{T^ix}{T^iy}\leq \delta$, for all $|i|\leq M$. Denote $U=\bigcup_{y\in B(x,3\eps,n)} B^c_{6\del}(y)$: of course, if $y\in E_{n+m+2M}\setminus U$ then $B^c_{6\del}(y)\cap B(x,3\eps,n)=\emptyset$.

On the other hand, if $y,z\in U\cap E_{n+m+2M}$, we let $y'=B^c_{6\del}(z)\cap B(x,3\eps,n)$, $z'=B^c_{6\del}(z)\cap B(x,3\eps,n)$: since $d_n(y',z')\leq 6\eps$, it follows that
 \[
 	\dis{T^jy}{T^jz}\leq 13\delta, j=-M,\ldots, n+M. 
 \]
 The constant $\delta$ is chosen so that $13\delta<\Cexp$, so if $z\not\in B^c_{\Cexp}(y)$, necessarily $\dis[m]{y}{z}\geq 13\delta$. It follows that 
\[
	 \#\left(U\cap E_{n+m+2M}\right)\leq D_{\delta}s_m(13\delta)\leq K_{\delta}e^{mh}
\]
for some constant $K_{\delta}$ which is independent of $m\in\N$ and of the family $\ms E$. Using the Bowen property, we  can thus deduce that
 \[
  Z_{n+m+2M}(\ms E)\mu_{n+m+2M}^{\ms E}(B(x,3\eps,n))\leq K_{\delta}' e^{S_n(x)}e^{C+mP} 
 \]
 and
 \[
   \mu_{n+m+2M}^{\ms E}(B(x,3\eps,n))\leq \tilde{K}_{\delta}  e^{S_n(x)-nP}.
 \]
where $\tilde{K}_{\del}$ only depends on $\delta$. This in turn implies that
  \[
    \mu^{\ms E}(B(x,\eps,n))\leq \tilde{K}_{\delta}  e^{S_n(x)-nP}.
   \]
\end{proof}

Below the constants $K_{\delta}, C_{\delta,\eps}$ are the ones given in the previous two lemmas.

\begin{corollary}
	There exists some $T-$invariant measure $\nu$ so that for every $0<\eps<\delta, n\geq 0$,
	\[
	C_{\delta,\eps}\leq \frac{\nu(B(x,\eps,n))}{e^{S_n(x)-nP}}\leq K_{\delta}.
	\] 
\end{corollary}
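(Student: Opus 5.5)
The plan is to obtain $\nu$ by averaging the measure $\mu^{\ms E}$ along the orbit and taking a weak limit, in the spirit of Bowen's construction of Gibbs states. The two preceding lemmas bound every weak accumulation point $\mu^{\ms E}$ of $(\mu^{\ms E}_n)$ above and below by multiples of $e^{S_n(x)-nP}$ on Bowen balls. Those bounds use constants independent of the family $\ms E$ and of the chosen accumulation point. The difficulty is that $\mu^{\ms E}$ need not be $T$-invariant.

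The first step is to show that the push-forward $T_*\mu^{\ms E}_n$ is comparable to a measure of the same type. By the Remark, $T(E_n)$ is again a maximally separated set of $(n,\delta)$-quasiperiodic points. Moreover $|S_n(Tx)-S_n(x)|\leq C$ for quasiperiodic $x$, and $T$ maps the center plaque $B^c_{6\delta}(x)$ isometrically onto $B^c_{6\delta}(Tx)$, so $T_*\lam_x=\lam_{Tx}$. Hence $T_*\mu^{\ms E}_n$ is, up to the bounded factor $e^{\pm C}$ and the bounded normalizing ratio $Z_n(T\ms E)/Z_n(\ms E)$ (from the Corollary on quasiperiodic families), equal to $\mu^{T\ms E}_n$. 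Rather than track this comparability through limits, it is simpler to use the Gibbs bounds directly. Since they are uniform in $\ms E$, every measure $T^k_*\mu^{\ms E}$ also satisfies two-sided Gibbs bounds with the same constants.

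To justify this, note that $T^{-k}B(x,n,\eps)\supset B(T^{-k}x,n+k,\eps)$ and that $T^{-k}B(x,n,\eps)$ is contained in a union of boundedly many $(n+k)$-Bowen balls. I will double-check this step, since $k$ varies. A cleaner route is to compare $\mu(T^{-k}B(x,n,\eps))$ with $\mu(B(y,n+k,\eps'))$, where $y\in T^{-k}B(x,n,\eps)$, and use the quasicocycle identity $S_{n+k}(y)\approx S_k(y)+S_n(T^ky)$. But $S_k(y)-kP$ is not bounded, so the direct comparison fails.

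I will therefore proceed differently: set $\nu_m=\frac1m\sum_{k=0}^{m-1}T^k_*\mu^{\ms E}$ and take a weak limit $\nu$, which is $T$-invariant. For the Gibbs bounds on $\nu$, observe that $\mu^{\ms E}_N$ is an average over quasiperiodic orbit segments. Weights are approximately constant along each segment by the Remark, and each segment is approximately $T$-invariant up to a center plaque. So $T^k_*\mu^{\ms E}_N$ is, up to a factor $e^{\pm 2C}$, a measure of the same form built from the family $T^k\ms E$, whose elements are still maximally separated quasiperiodic points. The Gibbs lemmas then apply to weak limits of $T^k_*\mu^{\ms E}_N$ with the same constants $C_{\eps,\delta}e^{-2C}$ and $K_\delta e^{2C}$. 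Averaging in $k$ preserves both bounds. Passing to the weak limit keeps the lower bound on the closed set $B(x,n,\eps)$ by upper semicontinuity. For the upper bound, I apply it to a slightly larger radius and use that the bound holds for all $\eps<\delta$.

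The main obstacle is the commutation of $T^k_*$ with the limit in $N$, together with the fact that $T^k$ of a quasiperiodic point $x$ is only quasiperiodic relative to $B^c_\delta(T^kx)$, with its endpoint shifted. I will handle this with the Remark: $T^k x$ lies in a center plaque of a point of an $(N,\delta)$-quasiperiodic segment, and the center isometry property gives $T^N(T^kx)\in B^c_\delta(T^kx)$. Then $T^k\ms E$ is admissible after renaming, and the uniformity of constants in $\ms E$ closes the argument.
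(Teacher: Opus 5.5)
Your argument is correct and is essentially the paper's. Both proofs rest on three facts: $T^k\ms E$ is again a $\delta$-quasiperiodic family, $T^k_*\mu^{\ms E}_N$ is (up to bounded factors) $\mu^{T^k\ms E}_N$, and the Gibbs constants are uniform in the family. Both then finish with a Krylov--Bogolyubov average. The only difference is where the averaging happens. The paper averages at the finite level, $\nu_n=\frac1n\sum_{j<n}T^j_*\mu^{\ms E}_n$, whereas you average the push-forwards of the limit $\mu^{\ms E}$. Your way lets you use the Gibbs lemmas only as stated, for accumulation points.

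Your bookkeeping is more careful than the paper's exact identity $T^j_*\mu^{\ms E}_n=\mu^{T^j\ms E}_n$, which ignores the $e^{\pm 2C}$ factor you track. Also, the ``main obstacle'' you flag is not one: $T^k_*$ is weak-$*$ continuous, and $T^k$ maps $B^c_\delta(x)$ onto $B^c_\delta(T^kx)$.
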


\begin{proof}
	Fix $\ms E$ a $\delta$-quasiperiodic family. Since $T$ is a center isometry, note  that $T\ms E$ is also a $\delta$-quasiperiodic family. Moreover, for every $n, j$ 
	\[
	T_{*}^j\mu^{\ms E}_n=\frac{1}{Z_n(\ms E)}\sum_{x\in E_n} T_{*}^j\lam_x=\mu^{T^j\ms E}_n.
    \]
   Consider 
   \[
   \nu_n=\frac{1}{n}\sum_{j=0}^{n-1} T^{j}_*\mu^{\ms E}_n=\frac{1}{n}\sum_{j=0}^{n-1} \mu^{T^j\ms E}_n.
   \]
   Since $C_{\delta,\eps}, K_{\delta}$ do not depend on the $\delta$-quasiperiodic family, it follows that for every $n,x,\eps$,
   \[
   	C_{\delta,\eps}\leq \frac{\nu_n(B(x,\eps,n))}{e^{S_n(x)-nP}}\leq K_{\delta}.
	\]
	The same thus holds for any accumulation point $\nu\in\{\nu_n\}_n'$, which is easily seen to be $T-$invariant. 
\end{proof}

\begin{corollary}\label{cor:fullergodic}
	The measure $\nu$ of the previous theorem is of full support and ergodic.
\end{corollary}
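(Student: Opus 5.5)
Both claims will follow from the two-sided Gibbs estimate of the previous corollary,
\[
C_{\delta,\eps}\,e^{S_n(x)-nP}\leq \nu(B(x,\eps,n))\leq K_\delta\, e^{S_n(x)-nP},
\]
combined with the specification property established in \Cref{pro:especificacioncenterisometry} and \Cref{thm:qmspecification}.

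\emph{Full support.} Take any nonempty open set $U$. Choose $x\in U$ and $\eps>0$ small enough that $B(x,1,\eps)\subset U$. The lower Gibbs bound with $n=1$ gives
\[
\nu(U)\geq C_{\delta,\eps}e^{S_1(x)-P}.
\]
This is positive because $S_1$ is continuous, hence finite. So $\nu$ gives positive mass to every nonempty open set.

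\emph{Ergodicity.} I will follow Bowen's argument for Gibbs measures. First I show that any two Bowen balls are \emph{asymptotically independent up to constants}: there is $c>0$ such that for all $x,y$, all $n,m$ and all $k\geq n+M$ (with $M=M(\eps)$ the specification constant),
\[
\nu\bigl(B(x,n,\eps)\cap T^{-k}B(y,m,\eps)\bigr)\geq c\,\nu(B(x,n,\eps))\,\nu(B(y,m,\eps)).
\]
To prove this, cover the gap of length $k-n$ by a maximal $(k-n-M,3\eps)$-separated set $R$. Specification produces, for each $w\in R$, a point $z_w$ that shadows $x$ for $n$ steps, then $w$, then $y$ for $m$ steps. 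Shrinking radii as needed, the balls $B(z_w,k+m,\eps/2)$ are pairwise disjoint and contained in $B(x,n,\eps)\cap T^{-k}B(y,m,\eps)$. The lower Gibbs bound together with \Cref{lem:particionsumas} gives
\[
\nu(B(z_w,k+m,\eps/2))\gtrsim e^{S_n(x)+S_{k-n-M}(w)+S_m(y)-(k+m)P}.
\]
Summing over $w$ and using $\sum_{w\in R}e^{S_{k-n-M}(w)}\asymp e^{(k-n)P}$ from \Cref{pro:boundzn} produces the product of the upper Gibbs bounds for the two Bowen balls, which is the displayed inequality.

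Now let $A$ be $T$-invariant with $\nu(A)>0$ and $\nu(A^c)>0$. Bowen balls generate the Borel $\sigma$-algebra modulo $\nu$. By the Lebesgue density argument for the expansive-type partition by Bowen balls, I can approximate $A$ and $A^c$ by finite disjoint unions of Bowen balls $E$ and $F$ of a common length $n$, up to small measure error. Invariance of $A$ forces $\nu(A\cap T^{-k}A^c)=0$. On the other hand, the inequality applied termwise gives $\nu(E\cap T^{-k}F)\geq c\,\nu(E)\nu(F)$ for large $k$. Letting the approximation error go to $0$ yields the contradiction $0\geq c\,\nu(A)\nu(A^c)$.

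\emph{Main obstacle.} The delicate step is the approximation of measurable sets by disjoint unions of Bowen balls. The dynamics here is only plaque expansive, so Bowen balls do not shrink along the center direction. I would try first to use the generating property modulo the center foliation: work with balls in $d_{\pm n}$, which do shrink transversally. Along the center direction I would use that $T$ is an isometry, so any invariant set is essentially saturated along orbits of the center plaques. If that fails, I would fall back on the quasiperiodic construction, with the weights $\lambda_x$ spread uniformly over center plaques, to handle the center direction directly.
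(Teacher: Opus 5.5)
Your full-support argument is the paper's. Your ergodicity plan is also the paper's route: a quasi-mixing bound for Bowen balls from specification and the two Gibbs bounds, then Bowen's Lemma~6, with plaque expansivity replacing expansivity. The paper only asserts that the needed modification is of the type in \Cref{lem:BoundZn}. The ball-level inequality is fine up to bookkeeping: you need two gaps of length $M$, and a spanning argument to bound $\sum_{w\in R}e^{S(w)}$ from below, since \Cref{pro:boundzn} controls a supremum. The genuine gap is the final step, which is exactly where the center direction must be dealt with. As written, it fails.

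Bowen balls of one length overlap, and your inequality is not stable under differences. So the fact that balls generate the Borel $\sigma$-algebra does not help. You need partitions whose cells are sandwiched between $B^{\pm}(x,n,\varepsilon/2)$ and $B^{\pm}(x,n,\varepsilon)$, so the Gibbs bounds pass to cells; this part is repairable.

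The real obstruction is different. For a center isometry and fixed $\varepsilon$, the set $\bigcap_n B^{\pm}(x,n,\varepsilon)$ contains a center plaque of radius comparable to $\varepsilon$, because center plaques never separate. Plaque expansivity only gives the reverse inclusion into $B^c_{3\varepsilon}(x)$. So these cells never shrink along the center. In Bowen's expansive setting they shrink to points at a fixed scale, which is why a single constant $c$ suffices there. Here, finite unions of cells of one scale only approximate sets that are, up to small measure, unions of center pieces of size about $\varepsilon$. A general invariant set $A$ is not of this form.

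Nor can you let $\varepsilon\to0$. The constant your argument yields is of order $C_{\delta,\varepsilon}/K_\delta^{2}$, further reduced by the gap $M(\varepsilon)$. In the paper's lower Gibbs bound, $C_{\delta,\varepsilon}$ is the Lebesgue proportion of a center plaque of radius $6\delta$ occupied by one of radius about $\varepsilon$. It therefore tends to $0$ with $\varepsilon$, while $K_\delta$ does not improve. The approximation error and $c$ go to zero together, so the contradiction $0\geq c\,\nu(A)\nu(A^c)$ is never reached.

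Your proposed remedy does not close this. Acting isometrically on the center does not make invariant sets center-saturated. For example, $f_0\times\mathrm{id}$ on $\mathbb{T}^2\times S^1$, with $f_0$ a hyperbolic toral automorphism, is a center isometry whose sets $\mathbb{T}^2\times B$ are invariant but not saturated by the center leaves $\{p\}\times S^1$. Any saturation would therefore have to come from minimality of the unstable lamination. For an invariant set, that saturation is essentially the ergodicity you are trying to prove, so invoking it is circular.

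What is missing is a mechanism that produces mixing along the center at arbitrarily fine scales, with constants that survive $\varepsilon\to0$. Alternatively, you could take a different route, such as the one in \cite{EqStatesCenter}. The fallback via the weights $\lambda_x$ is only a suggestion, not yet an argument.
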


\begin{proof}
	Any open set contains a $(n,\eps)-$Bowen ball, therefore it has positive $\nu$-measure. Ergodicity is established exactly as in Lemma $6$ of \cite{Bowen1974}, with straightforward modifications (as in \Cref{lem:BoundZn}).
\end{proof}

The above implies that $\{\nu_n\}_n$ has unique accumulation point. Not only that, if we now consider $\delta\leq\delta'$ and $\delta,\delta'-$periodic families $\ms E,\ms E'$, the two resulting measures are absolutely continuous with respect to each other, hence equal. In the end, we aim to prove that $\nu$ is the Haar measure.

\begin{definition}
	The measure $\nu$ constructed above will be referred to as the equilibrium measure of $(T,\mathbf S)$.
\end{definition}

To justify this nomenclature, we note that by \cite{Cao_2008},
\[
	P=\sup\{h_\mu(T)+\mu(\bm S):T_{*}\mu=\mu\}
\]
where
\[
	\mu(\bm S)=\lim_n \frac{1}n \int S_n \dd\mu. 
\]
At this moment we do not know that $\nu$ is the unique equilibrium measure. On the other hand, by construction, the disintegration of $\nu$ along the center foliation is equivalent to Lebesgue. It can be shown that among invariant measures whose center disintegration is absolutely continuous, the measure $\nu$ is the unique of such measures attaining the supremum in the formula above. See \cite{EqStatesCenter}.

\begin{remark}
	The above cited article gives a different proof of ergodicity of $\nu$ (in fact, Bernoulliness).
\end{remark}

Since $S_n(x) \approx nP-\log(\nu(B(x,\eps,n)))=n(h_\nu(T)+\nu(\bm S))-\log(\nu(B(x,\eps,n)))$ we deduce.

 \begin{corollary}\label{cor:sameEq}
 	Let $\bm S, \bm{S}'$ be Bowen quasicocycles with the same equilibrium measure $\nu$. Suppose that $\nu(\bm S)=\nu(\bm{S}')$. Then $\bm S, \bm{S}'$ are cohomologous.
 \end{corollary}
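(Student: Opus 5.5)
The plan is to compare both quasicocycles with the common measure $\nu$ through its Gibbs property, as in the preceding Corollary, and to remove the additive constant using the normalization $\nu(\bm S)=\nu(\bm S')$. Let $P,P'$ be the pressures of $\bm S,\bm S'$. Since $\nu$ is the equilibrium measure of both, the Corollary giving two-sided Bowen-ball bounds applies to each. Fix $0<\eps<\delta$ small, admissible for both. For every $x\in X$ and $n\geq0$ this gives
\[
\log C_{\delta,\eps}\le \log\nu(B(x,\eps,n))-S_n(x)+nP\le \log K_\delta,
\]
and the same with primes. Subtracting yields a constant $D$, independent of $x$ and $n$, such that
\[
\bigl|(S_n(x)-S'_n(x))-n(P-P')\bigr|\le D .
\]
So $\bm S-\bm S'$ is cohomologous to the constant additive cocycle $n(P-P')$. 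It remains to show $P=P'$.

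For this step I integrate the display above against $\nu$, divide by $n$ and let $n\to\infty$. The limits $\nu(\bm S)=\lim\frac1n\int S_n\,d\nu$ exist by near-subadditivity (the quasicocycle defect is bounded and $S_n$ is bounded by \Cref{lem:qmisbounded}). This gives $\nu(\bm S)-\nu(\bm S')=P-P'$, so the hypothesis forces $P=P'$. Then $\norml{S_n-S'_n}\le D$ for all $n$, which is exactly cohomology.

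An alternative is to identify both pressures through the variational principle of \cite{Cao_2008}, as the heuristic before the statement suggests: $P=h_\nu(T)+\nu(\bm S)$ and $P'=h_\nu(T)+\nu(\bm S')$. That route needs $\nu$ to attain the supremum for both quasicocycles, which requires the Brin--Katok/Gibbs computation of $h_\nu$. The direct integration above avoids it.

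The main obstacle is the legitimacy of the Gibbs bounds. The constants $C_{\delta,\eps},K_\delta$ must not depend on $n$ or $x$, and they must hold for the measure $\nu$ itself, not only for some accumulation point depending on the quasiperiodic family. The paper asserts uniqueness of the accumulation point through ergodicity and mutual absolute continuity, so I take it for granted that the $\nu$ built from $\bm S$ and the one built from $\bm S'$ are the same given measure, each satisfying its own Gibbs estimate. Once this is granted, the remaining steps are routine bookkeeping of constants.
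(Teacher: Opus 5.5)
Your proof is correct. It shares the paper's key ingredient, the two-sided Gibbs estimate for the common measure $\nu$, but it matches the two pressures differently.

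The paper's one-line justification writes $S_n(x)\approx nP+\log\nu(B(x,\eps,n))$ (the sign of the logarithmic term in the paper's display is a typo). It then substitutes $P=h_\nu(T)+\nu(\bm S)$, that is, it uses that $\nu$ attains the supremum in the variational principle for each quasicocycle. With $\nu(\bm S)=\nu(\bm S')$, both quasicocycles are then approximated by the same quantity $n(h_\nu(T)+\nu(\bm S))+\log\nu(B(x,\eps,n))$. The identity $P=h_\nu(T)+\nu(\bm S)$ is only asserted in the paper, through the cited variational principle and the reference on uniqueness among measures with absolutely continuous center disintegration. Deriving it from the Gibbs bounds would require a Brin--Katok argument combined with a Kingman-type ergodic theorem for $S_n/n$.

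You instead subtract the two Gibbs estimates pointwise, so $\log\nu(B(x,\eps,n))$ cancels before anything is computed. Integrating against the invariant measure $\nu$ then gives $P-P'=\nu(\bm S)-\nu(\bm S')$. This needs only $T$-invariance of $\nu$, continuity of the $S_n$, and Fekete's lemma for the quasi-additive sequence $\int S_n\,d\nu$; the entropy $h_\nu(T)$ never appears, nor any claim that $\nu$ is a variational equilibrium state.

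As a byproduct, your argument shows more: any two Bowen quasicocycles with the same equilibrium measure differ, up to a bounded error, by the constant cocycle $n\bigl(\nu(\bm S)-\nu(\bm S')\bigr)$. The normalization hypothesis serves only to remove that constant.
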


On the other hand, it is clear by the construction that cohomologous quasicocyles have the same equilibrium measure (perhaps with different pressure).

We end this section by showing that if $R:X\toit$ is a homeomorphism that commutes with $T$ and acts isometrically on $\FolW{c}$, then it also preserves $\nu$. 

Observe that if $\ms E$ is a $\delta$-quasiperiodic family for $T$, then $R\ms E=\{RE_n\}_n$ is also a $\delta$-quasiperiodic family. It follows that
\begin{align*}
R_*\nu=R_*\lim_n \sum_{j=0}^{n-1}T_*^{j}\mu_n^{\ms E}=\lim_n \sum_{j=0}^{n-1}(T^j)_*\mu_n^{R\ms E}=\nu 
\end{align*}
and $\nu$ is $R-$invariant.

%% file: 4ProofthmNEW.tex

\section{Proof of the main theorem}\label{sec:proofthm}

We return to the setting introduced in the first two sections. Recall that $M=\Gamma\backslash G, M'=\Gamma'\backslash G'$ and suppose that we are given a $\pi$-quasimorphism $L:\Gamma\to \R$. We aim to show that $L$ is bounded.

Our first task is proving the following Proposition.

\begin{proposition}\label{pro:existecentroisometry} 
   There exists $a_1,\cdots, a_r\in G, r=\rank_R G$ so that for every $i$, the right translation $T_{a_i}:M'\toit$ is a center isometry with minimal unstable foliation, and furthermore, there exists $S_i\approx \SL_2(\R)<G$ so that for $b\in S_i$, $T_b$ commutes with $T_{a_i}$.
\end{proposition}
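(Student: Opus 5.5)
We will build the elements $a_i$ from the restricted root system of $G$. Fix a Cartan involution, a maximal $\R$-split torus $A=\exp\mathfrak a$ with $\dim\mathfrak a=r$, and a system of simple restricted roots $\Delta=\{\alpha_1,\dots,\alpha_r\}$. For each $i$, let $H_i\in\mathfrak a$ be the element of the closed positive Weyl chamber with $\alpha_j(H_i)=\delta_{ij}$ for all $j$, that is, lying on the wall where all simple roots except $\alpha_i$ vanish. Instead, we could choose $H_i$ in the kernel of a single root $\beta_i$ and in no other root hyperplane. The second choice is closer to what we need: the centralizer of $\exp(H_i)$ should contain the $\SL_2(\R)$ generated by the root groups $U_{\pm\beta_i}$.

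Concretely, we choose $H_i\in\ker\beta_i$ generic in that hyperplane, with $\beta_i$ ranging over a set of $r$ linearly independent roots. Independence is possible because the roots span $\mathfrak a^*$. By Jacobson--Morozov, $\mathfrak g_{\beta_i}\oplus\mathfrak g_{-\beta_i}\oplus[\mathfrak g_{\beta_i},\mathfrak g_{-\beta_i}]$ contains an $\mathfrak{sl}_2$-triple, and we let $S_i$ be the corresponding connected subgroup. Since $\beta_i(H_i)=0$ (and $2\beta_i(H_i)=0$), $\mathrm{Ad}(a_i)$ fixes $\mathfrak s_i$, so $a_i=\exp H_i$ commutes with $S_i$. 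Right translations $T_b$ and $T_{a_i}$ on $M'$ then commute because $ba_i=a_ib$.

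Next we verify the center-isometry structure of $T_{a_i}$ on $M'=\Gamma'\backslash(G\times\mc C)$. The tangent bundle trivializes as $\mathfrak g\oplus\mathrm{Lie}(\mc C)$, and $DT_{a_i}$ acts by $\mathrm{Ad}(a_i^{-1})$ on $\mathfrak g$ and trivially on $\mathrm{Lie}(\mc C)$. We split $\mathfrak g=\mathfrak m\oplus\mathfrak a\oplus\bigoplus_\alpha\mathfrak g_\alpha$. The center is $E^c=\mathfrak m\oplus\mathfrak a\oplus\bigoplus_{\alpha(H_i)=0}\mathfrak g_\alpha\oplus\mathrm{Lie}(\mc C)$, and $E^{s},E^u$ are the sums over roots with $\alpha(H_i)>0$, respectively $\alpha(H_i)<0$. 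The action on $E^c$ is via the compact group $\mathrm{Ad}(a_i)|$ restricted to the centralizer. This is not automatically isometric: on $\mathfrak m\oplus\mathfrak a\oplus\mathfrak g_{\pm\beta_i}$, $\mathrm{Ad}(a_i)$ is the identity. We therefore use the left-invariant metric from a $K$-invariant inner product, for which $\mathrm{Ad}(a_i)$ is diagonal with eigenvalues $e^{\alpha(H_i)}$, equal to $1$ on $E^c$. The map is thus an isometry on $E^c$ and uniformly contracts/expands $E^s,E^u$, with rate $\min_{\alpha(H_i)\neq0}|\alpha(H_i)|>0$. The metric descends to $M'$ because it is left-invariant and $\Gamma'$ acts on the left.

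The main obstacle is minimality of the unstable foliation. Its leaves are orbits $xU^+_i$ of the unipotent group $U^+_i=\exp\bigoplus_{\alpha(H_i)<0}\mathfrak g_\alpha$, acting on $M'$ by right multiplication. I would first try Ratner's orbit closure theorem. The closure of $xU_i^+$ is $xH$ for a closed subgroup $H\supseteq U_i^+$, and it is invariant under $a_i$, which normalizes $U^+_i$. The group generated by the unipotents in $H$ is normalized by $U^+_i$. Since $U^+_i$ and its opposite $U^-_i$ generate each noncompact simple factor (genericity of $H_i$ makes $\alpha(H_i)\ne 0$ for roots in every factor), we need to show $H$ contains the full noncompact part. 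Here the no-rank-one-factor hypothesis and irreducibility of each block $\Gamma^{(j)}$ enter: they rule out intermediate closed orbits. Via the Mautner phenomenon and Moore ergodicity, $U^+_i$ acts ergodically on $M=\Gamma\backslash G$, hence on $M'$, because the $\mc C$-fiber is handled by the density of $\pi(\Gamma)$ in $\mc C$, i.e.\ of $\Gamma'$ projected to $\mc C$. Ratner then upgrades generic density to density of every orbit, since orbit closures are homogeneous and must be the whole space. The delicate point is the compact fiber $\mc C$. I expect to argue that $xH\cdot\mc C$-saturation together with $\Gamma'$ projecting densely to $\mc C$ forces $H\supset\{e\}\times\mc C^\circ$, with a finite-index issue handled by passing to connected components of $\mc C$, after possibly replacing $\Gamma$ by a finite-index subgroup as allowed by the reductions in Section~\ref{sec:quasicocyclequasimor}.
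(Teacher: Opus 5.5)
Your choice of singular elements, the center-isometry check (a $\theta$-compatible inner product makes $\mathrm{Ad}(a_i^{-1})$ diagonal on restricted root spaces, with eigenvalue $1$ on the centralizer of $H_i$ and acting trivially on $\mathrm{Lie}(\mathcal{C})$), and the commuting $\mathfrak{sl}_2$ built from $\mathfrak{g}_{\pm\beta_i}$ are fine. The paper instead takes $H\in\mathcal{W}_1\times\cdots\times\mathcal{W}_p$ and a diagonal embedding $\iota=(\iota_1,\dots,\iota_p)$, whose injectivity it proves. Your $S_i$ sits in a single block, which is enough for the statement as written, but the later Ratner step uses noncompact projection of $S$ to every block.

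There are two genuine gaps, both in the minimality of the unstable foliation. First, your Ratner argument does not close. $\overline{xU_i^+}$ is not $a_i$-invariant just because $a_i$ normalizes $U_i^+$: one only gets $\overline{xU_i^+}\,a_i=\overline{(xa_i)U_i^+}$, which is the closure of a different leaf. Moreover, ``orbit closures are homogeneous and must be the whole space'' is exactly the statement to be proved. For a general unipotent group, ergodicity together with Ratner's orbit-closure theorem does not exclude proper closed orbits $xH$ through special points. Neither irreducibility nor the rank hypothesis excludes them on its own. Also, $U_i^{\pm}$ need not generate every simple factor: for an irreducible lattice in $\mathrm{SL}_2(\mathbb{R})\times\mathrm{SL}_2(\mathbb{R})$, any $H_i\in\ker\beta_i$ vanishes on one factor. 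The missing idea is to use that $U_i^+$ is the full expanding horospherical subgroup of $a_i$. The paper does this by citing Ellis--Perrizo (an ergodic horospherical action on a compact homogeneous space is minimal), applied to the stable group of $a$, and then replaces $a$ by $a^{-1}$. Alternatively, write $yU_i^+\supseteq (ya_i^{-n})B\,a_i^{n}$ for a fixed neighbourhood $B$ of $e$ in $U_i^+$. Then use mixing of $T_{a_i}$ (which follows from ergodicity of the horospherical group by the Mautner argument) together with a thickening in the center-stable direction. This shows these expanded plaques are $\varepsilon$-dense for large $n$, uniformly in $y$.

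Second, density of $\pi(\Gamma)$ in $\mathcal{C}$ does not by itself justify passing from ergodicity on $M$ to ergodicity on $M'$. Since $\Gamma=\prod_j\Gamma^{(j)}$ need not be irreducible, Moore's theorem has to be applied block by block. After the Mautner phenomenon reduces $U_i^+$-invariance to invariance under the closed normal subgroup $N\triangleleft G$ generated by $U_i^+$, you must still show $\overline{\Gamma'(N\times\{e\})}=G\times\mathcal{C}$. A closed subgroup $S\supseteq\Gamma'$ of $G\times\mathcal{C}$ projecting onto $G$ could a priori be, modulo $D=\{d:(e,d)\in S\}$, the graph of a nontrivial continuous homomorphism $G\to\mathcal{C}/D$. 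The paper excludes this because a connected semisimple group without compact factors has no nontrivial continuous homomorphism to a compact group. This gives $S=G\times D$, and then $D=\mathcal{C}$ by density of $\pi(\Gamma)$. Your proposed fix, forcing $H\supseteq\{e\}\times\mathcal{C}^\circ$ after passing to finite index, is left speculative and does not supply this ingredient.
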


The construction follows from standard arguments, but since our hypotheses on $\Gamma$ are very general (and in particular, we are not assuming irreducibility), we give the complete proof.

Recall that we are assuming, after reductions, that $G$ is algebraically simply connected. Write
\[
	G=\prod_{j=1}^pG ^{(j)}\quad \Gamma=\prod_{j=1}^p\Gamma^{(j)}
\]
where each $\Gamma^{(j)}<G^{(j)}$ is irreducible, and $\rank_{\R}G^{(j)}\geq 2$.

For each  irreducible block $G^{(i)}$, choose a Cartan involution and let
\[
\lie g^{(i)} = \lie k^{(i)}\oplus\lie p^{(i)}
\]
be the corresponding Cartan decomposition of its Lie algebra. Let $K^{(i)}<G^{(i)}$ be the maximal compact subgroup with Lie algebra
$\lie k^{(i)}$, and choose a maximal abelian subspace
\[
\lie a^{(i)}\subset\lie p^{(i)}.
\]
Define
\[
A^{(i)} \defeq \exp\bigl(\lie a^{(i)}\bigr).
\]

Let $\Sigma_i\subset(\lie a^{(i)})^*$ be the corresponding restricted root system. Choose a set of positive restricted roots
$\Sigma_i^+\subset\Sigma_i$, and denote by $\Delta_i$ the associated set of simple restricted roots. If
\[
\lie g^{(i)}_\beta =
\left\{
X\in\lie g^{(i)}:[H,X]=\beta(H)X\text{ for every }H\in\lie a^{(i)}
\right\}
\]
is the restricted root space corresponding to $\beta$, define
\[
\lie n^{(i)}=\bigoplus_{\beta\in\Sigma_i^+} \lie g^{(i)}_\beta,\qquad
N^{(i)}=\exp\bigl(\lie n^{(i)}\bigr).
\]
By the Iwasawa decomposition, 
\[
G^{(i)}=K^{(i)}A^{(i)}N^{(i)}.
\]

Take products,
\[
K=\prod_{i=1}^pK^{(i)},
\qquad
\mathfrak a=\bigoplus_{i=1}^p\mathfrak a^{(i)},
\qquad
A=\prod_{i=1}^pA^{(i)},
\qquad
N=\prod_{i=1}^pN^{(i)}.
\]
We then have the Iwasawa decomposition
\[
G=KAN.
\]

For each $i$, let
\[
\lie a^{(i),+}=\left\{
H\in\lie a^{(i)}:\alpha(H)>0\text{ for every }\alpha\in\Delta_i
\right\}
\]
be the open positive Weyl chamber, and let
$\overline{\lie a^{(i),+}}$ denote its closure. Define
\[
\lie a^+ = \bigoplus_{i=1}^p\lie a^{(i),+},
\qquad
\overline{\lie a^+}=\bigoplus_{i=1}^p\overline{\lie a^{(i),+}}
\qquad
A^+=\exp\bigl(\overline{\lie a^+}\bigr).
\]
We then have the Cartan decomposition 
\[
G=KA^+K;
\]
we consider the Cartan projection
\[
    \kappa_A: G\rightarrow A^+
\]
associated with the choices of $K$, $A$, and the positive Weyl chamber. Under the product decomposition of $G$, it is given
componentwise by
\[
\kappa_A(g_1,\ldots,g_p)=\bigl(\kappa_{A^{(1)}}(g_1),\ldots,\kappa_{A^{(p)}}(g_p)\bigr).
\]

Let
\[
    \mc{W}_i = \{H_i\in \overline{\lie a^{(i),+}}\setminus\{0\}:\al(H_i)=0\text{ for some simple restricted root }\al\}=\partial \overline{\lie a^{(i),+}}\setminus\{0\}.
\]
The real-rank assumption $\dim\lie{a}_i\geq 2$ implies that $\mc W_i$ spans $\lie{a_i}$. Indeed, the fundamental coweights form a basis of $\lie a^{(i)}$ and lie in the closed positive Weyl chamber. Each is annihilated by every simple root except the corresponding one to it, so the rank assumption ensures that each fundamental coweight belongs to $\mc W_i$.

Let $\mc W \defeq \mc W_1\times\cdots\times\mc W_p\subset\mathfrak a$. By construction, every $H=(H_1,\ldots,H_p)\in\mc W$ satisfies $H_i\neq0$ for every $i$, and for each $i$ there exists a simple restricted root $\alpha_i\in\Delta_i$ such that $\alpha_i(H_i)=0$.

We claim that $\Span_{\R}\mc W=\lie a$. To see this, fix some index $i$, and for every $j\neq i$ choose $w_j\in \mc W_j$. Given $v\in \mc W_i$ both
\begin{align*}
H=(w_1,\ldots, w_{i-1},v,w_{i+1},\ldots, w_p)\quad H'=(w_1,\ldots, w_{i-1},2v,w_{i+1},\ldots, w_p)
\end{align*}
are in $\mc W$, therefore $H'-H\in \Span_{\R}\mc W$. Thus $\lie a_i=\Span_{\R}\mc W_i \subset \Span_{\R}\mc W$, which implies our claim. In particular,
there exists 
\[
 H^{(1)},\ldots, H^{(r)}\in \mc W    
\]
basis of $\lie a$. Defining $a_j=\exp(H^{(j)})$ we get
\begin{align}\label{eq:generadorAgood}
A=\langle\exp(\R H^{(1)}),\ldots, \exp(\R H^{(r)})\rangle.
\end{align}

For what follows we fix $a=a_j=\exp(H)$. Consider the right translation
\[
T_a:M=\Gamma\backslash G\longrightarrow\Gamma\backslash G,
\qquad
T_a(x)=xa.
\]
Denote
\begin{align*}
&\lie g^s = \bigoplus_{\beta(H)>0}\mathfrak g_\beta\\
&\lie g^{c} = \lie g_0\oplus \bigoplus_{\beta(H)>0}\mathfrak g_\beta\qquad \lie g_0=Z_{\lie g}(\lie a)\\
&\lie g^u = \bigoplus_{\beta(H)<0}\mathfrak g_\beta.
\end{align*}

\begin{lemma}
   $T_a$ is a center isometry. 
\end{lemma}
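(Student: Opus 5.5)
We show that $T_a$ is a center isometry on $M$ (and then on $M'$) by building the splitting from the adjoint action of $a$ on $\lie g$. Note that the displayed definitions contain typos. The intended decomposition is
\[
\lie g=\lie g^s\oplus\lie g^c\oplus\lie g^u,\qquad
\lie g^s=\bigoplus_{\beta(H)<0}\lie g_\beta,\quad
\lie g^c=\lie g_0\oplus\bigoplus_{\beta(H)=0,\ \beta\neq0}\lie g_\beta,\quad
\lie g^u=\bigoplus_{\beta(H)>0}\lie g_\beta .
\]
These labels, or their swap, depend on the sign convention for right translation; since the definition of a center isometry is symmetric under exchanging $s$ and $u$, the convention does not matter. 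Each summand is $\Ad(a)$-invariant, and $\Ad(a)$ acts on $\lie g_\beta$ by the scalar $e^{\beta(H)}$. So $\Ad(a)$ is uniformly contracting on $\lie g^s$, uniformly expanding on $\lie g^u$, and equal to the identity on $\lie g^c$. For the last point, $\lie g_0=\lie m\oplus\lie a$ is centralized by $a$, and $e^{\beta(H)}=1$ on the wall root spaces.

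To pass to the manifold, identify $T_xM\cong\lie g$ via the left-invariant trivialization $X\mapsto\frac{d}{dt}\,x\exp(tX)$. This is well defined on $\Gamma\backslash G$ because left-invariant fields descend. In this trivialization,
\[
DT_a\big|_{x}\;=\;\Ad(a^{-1})
\]
as a map $\lie g\to\lie g$, since $x\exp(tX)a=xa\exp(t\Ad(a^{-1})X)$. The constant subbundles $E^*=M\times\lie g^*$ are therefore continuous (indeed smooth) and $DT_a$-invariant, with contraction and expansion rates $e^{-\min|\beta(H)|}<1$ over the roots with $\beta(H)\neq0$.

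For the metric, take the left-invariant Riemannian metric given by the inner product $\langle X,Y\rangle=-B(X,\theta Y)$, where $B$ is the Killing form and $\theta$ the Cartan involution; it descends to $M$. The restricted root spaces are mutually orthogonal for this inner product, so the three bundles are orthogonal. The key check is that $\Ad(a)$ restricted to $\lie g^c$ is an isometry. On $\lie g_0$ and on the wall root spaces, $\Ad(a)$ is the identity, so this is immediate. In particular $DT_a|E^c$ is an isometry, and $\norm{DT_a|E^s}$ and $\norm{DT_a^{-1}|E^u}$ equal $e^{-\min_{\beta(H)\neq0}|\beta(H)|}<1$.

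Because $H\neq0$ in every block, some root in each block has $\beta(H)\neq0$, so the hyperbolic part is nontrivial; this is not needed for the definition, but it is needed later.

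Passing to $M'=\Gamma'\backslash(G\times\mc C)$, the Lie algebra is $\lie g\oplus\lie c$, and $a$ acts trivially on $\lie c$. Add $\lie c$ to the center bundle and use a bi-invariant metric on $\mc C$. The same computation then shows that $T_a$ is a center isometry on $M'$.

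The only delicate point is that the $\Ad(a)$-invariant splitting must be orthogonal for a metric in which $\Ad(a)$ restricted to the center is isometric. The diagonalizability of $\Ad(a)$ with real eigenvalues, together with the $-B(\cdot,\theta\cdot)$ inner product, handles this. Everything else is the routine derivative computation above.
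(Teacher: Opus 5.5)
Your proof is correct and is essentially the paper's argument. The left trivialization gives $DT_a=\Ad(a^{-1})$, acting by $e^{-\beta(H)}$ on $\lie g_\beta$, and a left-invariant metric with mutually orthogonal root spaces then yields the splitting; your explicit choice $-B(\cdot,\theta\,\cdot)$ just makes the paper's choice of inner product concrete, and $M'$ is handled by adding $\lie c$ to the center.

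The only slip is cosmetic. Since $DT_a=\Ad(a^{-1})$, the paper's labels $\lie g^s=\bigoplus_{\beta(H)>0}\lie g_\beta$ and $\lie g^u=\bigoplus_{\beta(H)<0}\lie g_\beta$ are already the correct ones; only $\lie g^c$ contains the typo. With your swapped labels, $DT_a$ expands $E^s$, so your final norm estimate holds only after relabeling back.
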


\begin{proof}
     Choose an inner product on $\lie g$ making the restricted-root decomposition $\lie g=\lie g^s\oplus \lie g^c\oplus \lie g^u$ to be orthogonal, and descend the corresponding left-invariant metric to $M$. In the left trivialization of the tangent bundle,
     \[
     DT_a=\Ad(a^{-1}),
     \]
     and therefore, on $\lie g_{\beta}$ we have $DT_a=e^{-\beta(H)}\Id$. This gives stable, center, and unstable bundles for $T_a$. 
\end{proof}
In fact, every non-identity element $a'\in A$ induces a center isometry. See \cite{Hof1985}.

\begin{remark}
Note that for our choice of $a$ (which is not regular), the center leaves strictly contain the corresponding $A$-orbit. Indeed, $H$ is singular, therefore there exists a restricted root $\alpha$ with $\alpha(H)=0$. On $\lie g_{\alpha}$, $\Ad(a^{-1})$ acts as the identity, providing center directions beyond $\lie a$.
\end{remark}

The stable horospherical subgroup of $a$ is
\[
N_a^+ \defeq \left\{u\in G: a^{-n}ua^n\longrightarrow e\text{ as }n\longrightarrow+\infty \right\}.
\]
Note that for $u\in N_a^+$, 
\[
T_a^n(xu)=xua^n=xa^n\bigl(a^{-n}ua^n\bigr),
\]
and hence the points $x$ and $xu$ approach each other under forward iteration, and $xN_a^+ \subset \Ws{x;T_a}$. Since $N_a^+$-orbits are tangent to the stable bundle of $T_a$, and hence form its stable foliation. Thus $xN_a^+ = \Ws{x;T_a}$. Note also that $N_a^+$ is contained in the unipotent subgroup
\[
N=\exp\left(\bigoplus_{\beta\in\Sigma^+}\mathfrak g_\beta\right)
\]
appearing in the Iwasawa decomposition $G=KAN$.

For every $i$, let
\[
N_{a,i}^+=N_a^+\cap G^{(i)}.
\]
Since $H_i\neq0$ and the restricted roots span $(\lie a^{(i)})^*$, there exists $\beta\in\Sigma_i$ such that $\beta(H_i)>0$. Thus $N_{a,i}^+$ contains a nontrivial one-parameter unipotent subgroup and is therefore non-compact. It is also closed. By Moore's ergodicity theorem (Chapter $2$ of \cite{semisimpleZimmer}) and the irreducibility of $\Gamma^{(i)}$, the group $N_{a,i}^+$ acts ergodically on $\Gamma^{(i)}\backslash G^{(i)}$ with respect to the normalized Haar measure. Since
\[
N_a^+=\prod_{i=1}^pN_{a,i}^+,
\]
the product action of $N_a^+$ on $M=\Gamma\backslash G$ is ergodic as well.

Next we work with the compact bundle extension $M'$ of $M$, which has fiber $\mc C=\overline{\Im\pi}$. Recall that $G'=G\times\mc C$. The maximal split subgroup of $G'$ relevant to us is
\[
A'=A\times\{e\},
\]
and, with respect to the maximal compact subgroup $K\times \mc C$, its Cartan projection is
\[
\kappa_{A'}(g,k)=\kappa_A(g).
\]
Then $a\in A \subset A'$ also defines a center isometry $T_a:M'\toit$ that lifts the previous one through $p_M:M'\to M$.

The action of $N_a^+$, through its inclusion in the first factor of $G'$, is also ergodic on $M'$. To see this, let
\[
N_i\triangleleft G^{(i)}
\]
be the connected normal subgroup generated by $N_{a,i}^+$. Since $N_{a,i}^+\subseteq N_i$, the previously established ergodicity of $N_{a,i}^+$ implies that the $N_i$-action on $\Gamma^{(i)}\backslash G^{(i)}$ is ergodic. As the normalized Haar measure has full support, this action has a dense orbit. Normality of $N_i$ then gives
\[
\overline{\Gamma^{(i)}N_i}=G^{(i)}.
\]
Writing 
\[
N=\prod_{i=1}^pN_i,
\]
we obtain
\[
\overline{\Gamma N}=G.
\]
\begin{claim}
\[
S\defeq\overline{\Gamma'(N\times\{e\})}=G\times\mc C.
\]
\end{claim}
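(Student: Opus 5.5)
The plan is to show that $S$ is a closed subgroup of $G'$, build from it a continuous homomorphism of $G$ into a compact group, and use that $G$ has no compact factors to force that homomorphism to be trivial.

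\emph{$S$ is a closed subgroup.} Each $N_i$ is normal in $G^{(i)}$, so $N=\prod_i N_i$ is normal in $G$, and hence $N\times\{e\}$ is normal in $G'=G\times\mc C$. Therefore $\Gamma'(N\times\{e\})$ is a subgroup of $G'$, and its closure $S$ is a closed subgroup.

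\emph{$S$ projects onto $G$.} Let $p_G:G'\to G$ be the first-coordinate projection. Since $\mc C$ is compact, $p_G$ is a closed map. Thus $p_G(S)$ is closed and contains $\Gamma N$. Using $\overline{\Gamma N}=G$, established just above, we get $p_G(S)=G$.

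\emph{The fibre subgroup $H$.} Set $H=\{c\in\mc C:(e,c)\in S\}$, a closed subgroup of $\mc C$. Let $D=p_{\mc C}(S)$ be the projection of $S$ to $\mc C$. Since $\{e\}\times H=S\cap(\{e\}\times\mc C)$ is the kernel of $p_G|_S$, it is normal in $S$, so $H$ is normalized by $D$. Now $D\supset\pi(\Gamma)$, so $\overline D=\mc C$. Since the normalizer of the closed subgroup $H$ is closed, $H$ is normal in $\mc C$. Hence $\mc C/H$ is a compact group.

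\emph{The homomorphism $\psi$.} For $g\in G$, the set $\{c:(g,c)\in S\}$ is nonempty by the second step, and it is a single coset $c_gH$. Define
\[
\psi:G\to\mc C/H,\qquad \psi(g)=c_gH.
\]
This is a homomorphism because $S$ is a subgroup. Its graph is the image of $S$ in $G\times\mc C/H$. Because $H$ is compact and $S$ is saturated by $\{e\}\times H$, this image is closed. A homomorphism into a compact Hausdorff space with closed graph is continuous, so $\psi$ is continuous.

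\emph{$\psi$ is trivial.} This is the key step. At this point $G$ is connected semisimple with no compact factors, by the reductions of the earlier section. Each almost-simple factor of $G$ is noncompact and therefore generated by unipotent one-parameter subgroups. The image of such a subgroup under a continuous homomorphism to a compact Lie group lies in the closure of a one-parameter subgroup, which is a torus. Unipotent elements are conjugate to their powers $u^k$; via conjugation by $\exp(tH)$ in an $\mathfrak{sl}_2$-triple, $u$ is conjugate to arbitrarily high powers. The resulting constraint on a compact abelian image forces it to be trivial. Alternatively, one can invoke the Segal--von Neumann theorem directly: a connected semisimple group without compact factors has no nontrivial continuous homomorphisms into compact groups. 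Since the target $\mc C/H$ is a compact Lie group (being a quotient of $\mc C<\On_N$), either route gives $\psi\equiv H$.

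\emph{Conclusion.} Triviality of $\psi$ means that $(g,c)\in S$ if and only if $c\in H$. Since $\Gamma'\subset S$, we get $\pi(\gamma)\in H$ for every $\gamma\in\Gamma$. As $H$ is closed, $H\supseteq\overline{\pi(\Gamma)}=\mc C$, so $H=\mc C$. Combining this with $p_G(S)=G$ gives $S=G\times\mc C$.

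The only delicate points are the continuity of $\psi$ and the absence of compact factors; both are handled above.
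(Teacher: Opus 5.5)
Your argument is correct and follows the paper's proof step by step. $S$ is a closed subgroup because $N\times\{e\}$ is normal, and it projects onto $G$ because the projection along the compact factor is closed and $\overline{\Gamma N}=G$. The fibre subgroup $H$ is normal in $\mathcal C$ by density of $\pi(\Gamma)$, the induced homomorphism $G\to\mathcal C/H$ is trivial since $G$ has no compact factors, and $\pi(\Gamma)\subseteq H$ then forces $H=\mathcal C$. There are two small deviations. You get continuity of $\psi$ from a closed-graph argument rather than from properness of $q_G$, which is equally valid. Your heuristic ``conjugate to high powers'' route is vague, but your Segal--von Neumann alternative supplies exactly the standard fact the paper cites.
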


We have that $N\times\{e\}\triangleleft G\times \mc C$, therefore $S$ is a closed subgroup. Let $q_G:S\to G, q_{\mc C}: S\to \mc C$ be the corresponding projections onto the first and second factor.

Compactness of $\mc C$ implies that $q_G$ is a closed proper map. Since $q_G(S) \supset \Gamma N$, which is dense in $G$, we get that $q_G(S)=G$.

To deal with the projection onto the second factor (where $q_{\mc C}$ may not be closed), we consider 
\[
D=\{d\in\mc C:(e,d)\in S\}.
\]
Then $D$ is closed. If $c\in q_{\mc C}(S)$, $c=q_{\mc C}(g,c)$ for some $g\in G$, and
\[
    d\in D\Rightarrow (g,c)(e,d)(g,c)^{-1}=(e,cdc^{-1})\in S \therefore cdc^{-1}\in D.
\]
Hence, $q_{\mc C}(S)$ normalizes $D$. Since this projection contains $\pi(\Gamma)$, which is dense in $\mc C$, we get that $D$ is normal in $\mc C$.

Consider the group homomorphism  
\[
\varphi \colon S\longrightarrow\mc C/D,
\qquad
\varphi(g)=cD\quad\text{whenever }(g,c)\in S.
\]
Then $\varphi\circ q_{G}$ is continuous, and since $q_G$ is proper, we deduce that $\varphi$ is continuous. Since $G$ is connected and semisimple with no compact factors, every continuous homomorphism from $G$ to a compact group is trivial. Thus $S=G\times D$.

Since $\Gamma'\subseteq S$, we have $\pi(\Gamma)\subseteq D$, and density gives $D=\mc C$. This proves the claim.

\smallskip 
 
Now we conclude using the Mautner phenomenon:

\begin{theorem}[Moore, \cite{Moore_1980}]
    Let $G$ be a connected semisimple real Lie group with finite center, and let $U$ be a connected subgroup generated by one-parameter unipotent subgroups. Let $N$ be the smallest closed normal subgroup of $G$ containing $U$.

    Then, for every strongly continuous unitary representation $\rho:G\to\Uop(\Hil)$, a vector $f\in \Hil$ is fixed by $U$ if and only if it is fixed by $N$.
\end{theorem}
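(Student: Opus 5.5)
The plan is to prove the nontrivial direction: if $f$ is fixed by $U$, then $f$ is fixed by $N$. The converse is immediate, since $U\subseteq N$. I will work with the stabilizer
\[
\mathrm{Stab}(f)=\{g\in G:\rho(g)f=f\},
\]
which is a closed subgroup by strong continuity, and with its Lie algebra $\lie h\subseteq\lie g$. The aim is to show that $\lie h$ contains the ideal $\lie n$ generated by $\mathrm{Lie}(U)$. I will then identify the connected normal subgroup with Lie algebra $\lie n$ with $N$.

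\textbf{Step 1: reduce to one unipotent generator.} Since $U$ is generated by one-parameter unipotent subgroups $\exp(tX)$ with $X\in\lie g$ nilpotent, it suffices to treat one such $X$ with $\exp(tX)f=f$ for all $t$.

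\textbf{Step 2: the Mautner lemma for $\mathrm{SL}_2$.} By Jacobson--Morozov, I embed $X$ in an $\mathfrak{sl}_2$-triple $(X,Y,H_0)$ with $[H_0,X]=2X$. Set $a_t=\exp(tH_0)$ and $u_s=\exp(sX)$. I claim $H_0\in\lie h$, and I expect this to be the main obstacle. I would first try the standard route. The matrix coefficient $\phi(g)=\langle\rho(g)f,f\rangle$ is bi-invariant under $u_s$. In the image of $\mathrm{SL}_2$, the double coset space $\{u_s\}\backslash \mathrm{SL}_2/\{u_s\}$ lets one approximate $a_t$ by products $u_{s_n}\,w_n\,u_{s'_n}$ with $w_n\to a_t$; concretely, $\begin{pmatrix}1&0\\ \epsilon&1\end{pmatrix}$ is conjugated by unipotent upper-triangular matrices into something close to a diagonal matrix. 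By continuity of $\phi$ this gives $\langle\rho(a_t)f,f\rangle=\|f\|^2$. Equality in Cauchy--Schwarz then forces $\rho(a_t)f=f$.

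\textbf{Step 3: the root-space argument.} Let $Z\in\lie g$ satisfy $[H_0,Z]=\lambda Z$ with $\lambda\neq0$; say $\lambda>0$, the case $\lambda<0$ being symmetric with $t\to+\infty$. Then $a_{t}\exp(sZ)a_{-t}=\exp(se^{\lambda t}Z)\to e$ as $t\to-\infty$. Using that $f$ is $a_t$-fixed,
\[
\|\rho(\exp sZ)f-f\|=\|\rho(a_t\exp(sZ)a_{-t})f-f\|\longrightarrow0 .
\]
Hence every nonzero $\mathrm{ad}(H_0)$-eigenspace lies in $\lie h$.

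\textbf{Step 4: generate the ideal.} Write $\lie g=\bigoplus_k\lie s_k$ as a sum of simple ideals. $H_0$ has a nonzero component exactly in those $\lie s_k$ in which $X$ has a nonzero component. Fix such an $\lie s_k$ and let $\lie b\subseteq\lie s_k$ be the subalgebra generated by the nonzero eigenspaces. $\lie b$ is normalized by each nonzero eigenspace and, by the Jacobi identity, by the zero eigenspace as well. So $\lie b$ is a nonzero ideal of the simple algebra $\lie s_k$, and therefore $\lie b=\lie s_k$. Thus $\lie h$ contains the sum of these factors, which is precisely the smallest ideal containing $X$.

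\textbf{Conclusion.} Summing over the generators of $U$, $\lie h\supseteq\lie n$. The corresponding connected normal subgroup $G_{\lie n}$ is a product of simple factors of $G$, hence closed, and it contains $U$. Any closed normal subgroup containing $U$ has Lie algebra an ideal containing $\mathrm{Lie}(U)$, so it contains $G_{\lie n}$; therefore $G_{\lie n}=N$. Since $G_{\lie n}\subseteq\mathrm{Stab}(f)$, the vector $f$ is fixed by $N$.
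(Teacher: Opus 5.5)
The paper does not prove this statement. It quotes it from Moore~\cite{Moore_1980} and uses it once, to upgrade $N_a^+$-invariance of a vector in $L^2(M')$ to invariance under the normal subgroup generated by $N_a^+$.

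Your proposal supplies the classical argument, and it is correct:
\begin{itemize}
\item Jacobson--Morozov to embed $X$ in an $\mathfrak{sl}_2$-triple;
\item the Mautner lemma in $\mathrm{SL}_2$ to get $a_t$-invariance;
\item contraction of the nonzero $\operatorname{ad}H_0$-eigenspaces by $a_t$;
\item simplicity of the factors to generate the ideal.
\end{itemize}
The following points all check out:
\begin{itemize}
\item $H_0$ and $X$ have nonzero components in exactly the same simple ideals.
\item The derivation argument shows $\mathfrak b$ is an ideal.
\item $N$ is identified with the connected normal subgroup attached to the ideal generated by $\mathrm{Lie}(U)$.
\end{itemize}
Step~4 tacitly uses that $\operatorname{ad}H_0$ is diagonalizable with integer eigenvalues, which follows from $\mathfrak{sl}_2$-theory. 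Citing keeps the paper short. Your route is self-contained, shows that only nilpotency of the generators and the decomposition of $\mathfrak g$ into simple ideals are used, and never uses finiteness of the center.

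Two points in Step~2 need to be stated correctly, though neither affects validity.

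First, the approximating elements must satisfy $w_n\to e$, not $w_n\to a_t$, with $u_{s_n}w_nu_{s'_n}\to a_t$; as written the argument is vacuous. Moreover this is a two-sided multiplication with independent parameters, not a conjugation. Conjugation preserves the trace, so it cannot bring $v_\epsilon=\begin{pmatrix}1&0\\ \epsilon&1\end{pmatrix}$ close to $a_t$ when $t\neq0$. Explicitly, with $s=(e^{t}-1)/\epsilon$ and $s'=(e^{-t}-1)/\epsilon$,
\[
u_s\,v_\epsilon\,u_{s'}=\begin{pmatrix}e^{t}&0\\ \epsilon&e^{-t}\end{pmatrix}=v_{\epsilon e^{-t}}\,a_t .
\]
Hence $\phi(a_t)=\lim_{\epsilon\to0}\phi(v_{\epsilon e^{-t}}a_t)=\lim_{\epsilon\to0}\phi(v_\epsilon)=\|f\|^2$.

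Second, the $\mathfrak{sl}_2$-triple integrates in general only to a homomorphism $\widetilde{\mathrm{SL}}_2(\mathbb R)\to G$. This need not factor through $\mathrm{SL}_2(\mathbb R)$, for example when $G$ is a nontrivial finite cover of $\mathrm{SL}_2(\mathbb R)$, which has finite center. So ``in the image of $\mathrm{SL}_2$'' needs a word. The displayed identity still holds in $\widetilde{\mathrm{SL}}_2(\mathbb R)$, and hence in $G$. The discrepancy between the two sides is central and discrete, depends continuously on $(\epsilon,t)\in(0,\infty)\times\mathbb R$, and is trivial at $t=0$.
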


By the above, every $N_a^+$-invariant vector $f\in \mc{L}^2(M')$ is invariant under $N\times\{e\}$. Lift $f$ to a locally square-integrable function $\widetilde f$ on $G'$. This lift is left $\Gamma'$-invariant and right $(N\times\{e\})$-invariant. Since $N\times\{e\}$ is normal in $G'$, it is also left $(N\times\{e\})$-invariant, and hence left invariant under $\Gamma'(N\times\{e\})$. By density and continuity of translations in $\mc L^2_{\mathrm{loc}}(G')$, $\widetilde f$ is invariant under all left translations of $G'$. Thus $f$ is constant almost everywhere, proving that the $N_a^+$-action on $M'$ is ergodic.

\smallskip 
 
We use the obtained ergodicity to deduce minimality of the corresponding stable foliation. An ergodic horospherical action on a compact homogeneous space is minimal; see \cite{Ellis_1978}. Therefore,
\[
\overline{xN_a^+}=\overline{\Ws{x;T_a}}=M
\qquad
\text{and}
\qquad
\overline{x'N_a^+}=\overline{\Ws{x;T_a}}=M'
\]
for every $x\in M$ and every $x'\in M'$, where on $M'$ we identify $N_a^+$ with $N_a^+\times\{e\}$. Thus, both right translations $T_a:M\toit, T_a:M'\toit$ have minimal stable foliations.

Finally, we construct a subgroup isomorphic to $\SL_2(\R)$ in the centralizer of $a$. For each $i$, choose $\alpha_i\in\Delta_i$ with $\alpha_i(H_i)=0$, and set
\[
\beta_i=
\begin{cases}
\alpha_i,&\text{if }2\alpha_i\notin\Sigma_i,\\
2\alpha_i,&\text{if }2\alpha_i\in\Sigma_i.
\end{cases}
\]
Then $\beta_i$ is nonmultipliable ($2\beta_i\not\in\Sigma_i$) and $\beta_i(H_i)=0$.

The standard restricted-root construction gives an $\mathfrak{sl}_2$-triple
\[
X_i\in\mathfrak g^{(i)}_{\beta_i},
\qquad
Y_i\in\mathfrak g^{(i)}_{-\beta_i},
\qquad
[X_i,Y_i]=H_{\beta_i}\in\mathfrak a^{(i)},
\]
where $H_{\beta_i}$ is the coroot vector. This triple integrates to a homomorphism
\[
\iota_i:\SL_2(\R)\longrightarrow G^{(i)}
\]
whose restriction to the diagonal torus is the restricted coroot $\beta_i^\vee$:
\[
\iota_i\!\begin{pmatrix}t&0\\0&t^{-1}\end{pmatrix}
=\beta_i^\vee(t)
\qquad(t\in\R^\times).
\]
Since $G^{(i)}$ is algebraically simply connected and $\beta_i$ is nonmultipliable, $\beta_i^\vee$ is primitive in the cocharacter lattice of the underlying split algebraic torus \cite[Theorem~S.3.1]{ConradFeng2020}. Thus there exists a character $\chi_i$ of the split algebraic torus satisfying
$\langle\chi_i,\beta_i^\vee\rangle=1$, and 
\[
\chi_i\bigl(\iota_i(-I)\bigr)
=\chi_i\bigl(\beta_i^\vee(-1)\bigr)
=-1,
\]
so $\iota_i(-I)\neq e$. Since the differential of $\iota_i$ is injective, its kernel is discrete and therefore central in $\SL_2(\R)$. Hence $\ker\iota_i\subseteq\{\pm I\}$, and $\iota_i$ is injective.

Moreover, $\beta_i(H_i)=0$ implies that $X_i$ and $Y_i$ commute with $H_i$, and $H_{\beta_i}$ does as well. Therefore
\[
\iota_i\bigl(\SL_2(\R)\bigr)
\subseteq C_{G^{(i)}}(\exp H_i)^\circ.
\]

Define
\[
\iota:\SL_2(\R)\longrightarrow G,
\qquad
\iota(s)=\bigl(\iota_1(s),\ldots,\iota_p(s)\bigr).
\]
Then $\iota$ is injective, and its image $S$ satisfies
\[
S\cong\SL_2(\R),
\qquad
S\subseteq C_G(a)^\circ.
\]
Its projection to every irreducible block is noncompact. Replacing $a$ by $a^{-1}$ (to obtain minimal unstable foliations) we finish the proof of \Cref{pro:existecentroisometry}.

\paragraph{The equilibrium state of $T_a$.} We fix $\ell:\R^N\to\R$ a linear functional. Let also $S$ be the quasicocycle for the action $G\acts \Sigma'\times G'$ given on \Cref{pro:quasicoccycleG}, and denote by $(S^{a,\ell}_n(\cdot))=(\ell(S(\cdot,a^n)))$ the (real) quasicocycle over the action of $T_a$.

\begin{lemma}
$(S^{a,\ell}_n)_n$ has the Bowen property.  
\end{lemma}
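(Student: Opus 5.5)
The plan is to reduce the Bowen property to a coarse Lipschitz estimate for $L$ on $\Gamma$, using that points whose $T_a$-orbits stay $\eps_0$-close for $n$ steps differ by an element of $G'$ whose conjugates by $a^k$, $0\le k\le n$, stay in a fixed small neighbourhood of the identity.

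First, the geometric step. Write points of $M'$ through the fundamental domain $\Sigma'=\Sigma\times\mc C$. If $y\in B(x,n,\eps_0)$ for $T_a$ with $\eps_0$ below the injectivity radius of $M'$, then lifting to $G'$ gives $y=xh$ with $h\in G'$ small. Moreover $xa^k$ and $ya^k=xa^k(a^{-k}ha^k)$ remain $\eps_0$-close for $0\le k\le n-1$. By uniqueness of short lifts, $h_k:=a^{-k}ha^k$ stays in the $\eps_0$-ball $U$ of $e\in G'$ for these $k$. In particular this holds for $k=n$ after enlarging $U$ by the fixed factor $\Ad(a^{-1})$.

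Second, compare the integer parts. Recall $S((x,A),(a^n,e))=A^{-1}L([xa^n])$. We have $[ya^n]=[xa^nh_n]$, and $x a^n h_n$ lies in the $U$-neighbourhood of $x a^n$. Hence $[xa^n]^{-1}[xa^nh_n]$ belongs to the finite set
\[
F_U=\{\gamma\in\Gamma:\ \gamma\Sigma\cap \Sigma U\neq\emptyset\},
\]
which is finite by compactness of $\overline{\Sigma}$ and discreteness of $\Gamma$. The quasimorphism inequality then gives
\[
\bigl|L([ya^n])-L([xa^n])\bigr|
\le \norm{\delta_\pi L}+\max_{f\in F_U}\abs{L(f)}.
\]
This uses $\pi$ orthogonal, and $L(gf)=L(g)+\pi(g)L(f)$ up to $\norm{\delta_\pi L}$.

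Third, the base points. The $\mc C$-coordinates $A,A'$ of $x,y$ differ by a small element of $\mc C$, while $L([xa^n])$ can have norm of order $n$ by \Cref{lem:qmisbounded}. The term $(A^{-1}-A'^{-1})L([xa^n])$ is therefore not automatically bounded, and I expect this to be the main obstacle. To handle it, I would exploit that the $\mc C$-direction is part of the center (since $a$ acts trivially on $\mc C$), so the relevant small element $c\in\mc C$ is the same along the whole orbit segment. I would then rewrite $A^{-1}L([xa^n])$ as a telescoping sum of $n$ increments $\pi$-twisted along the orbit. The aim is to show $(A^{-1}-A'^{-1})L([xa^n])$ is controlled by using instead the representative in which $y$ and $x$ have the same $\mc C$-coordinate. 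Concretely, I would first treat $h$ with trivial $\mc C$-component, where the argument above closes. If the Bowen ball genuinely contains $\mc C$-displacements that break the bound, I would weaken the conclusion as allowed by the phrase ``weak version of the Bowen property'' in the introduction. In that case the aim is control up to $\eps_0$-displacements along $\mc C$, where the center is isometric; the pressure and Gibbs arguments of \Cref{sec:existeEq} only use Bowen balls modulo center plaques.

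Finally, applying $\ell$, with $\abs{\ell}$ fixed, gives $|S^{a,\ell}_n(x)-S^{a,\ell}_n(y)|\le C$ with $C$ independent of $n,x,y$.
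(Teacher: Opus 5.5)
Your steps 1--2 are exactly the paper's argument. The paper lifts the two points, and the endpoints of their $a^n$-orbits, to neighbouring translates of $\Sigma'$. It concludes that the integer parts differ by elements of a finite subset of $\Gamma$, applies the quasimorphism inequality, and stops.

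You are right that this controls only $L([ya^n])-L([xa^n])$, not $S^{a,\ell}_n(x_0,A)=\ell\bigl(A^{-1}L([x_0a^n])\bigr)$. The paper's proof never addresses the prefactor $A^{-1}$. The obstruction is sharp. Take $x=(x_0,A)\in\Sigma'$ and $y=(x_0,Ac)$ with $c\in\mc C$ close to $e$. Right multiplication by $(e,c)$ commutes with $T_a$ and moves points uniformly little, so $y\in B(x,n,\eps_0)$ for every $n$. Both points have the same integer part $[x_0a^n]$, and
\[
S^{a,\ell}_n(x)-S^{a,\ell}_n(y)=\ell\bigl((\Id-c^{-1})A^{-1}L([x_0a^n])\bigr)
\]
exactly. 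Hence, when $\dim\mc C>0$, the Bowen property for all $\ell$ is equivalent to a uniform bound, over $x_0\in\Sigma$ and $n\geq0$, on the component of $L([x_0a^n])$ in $\mathrm{Fix}(\mc C^0)^{\perp}$. This is an estimate of the same type as \eqref{eq:zquasimorphism}. The paper derives that only at the very end, from the equilibrium-state argument that uses this lemma. Once \Cref{thm:mainA} is known the lemma is trivial; the problem is that it is an input to the proof.

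So step 3 is a genuine gap, and your proposal does not close it. Telescoping cannot help, because the identity above is exact. There is also no freedom in the choice of representative, since $S$ is evaluated at the unique point of $\Sigma'$. Nor does the needed bound follow from the quasimorphism inequality alone. On $F_2=\langle s,t\rangle$, let $\pi(s)$ be an irrational rotation of $\R^2$ and $\pi(t)=\Id$. The cocycle with $L(s)=0$ and $L(t)=v\neq0$ gives $(\pi(s)-\Id)L(t^n)=n(\pi(s)-\Id)v$, which grows linearly.

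Your fallback also fails: the conclusions of \Cref{sec:existeEq} that are used later cannot tolerate unbounded variation along the $\mc C$-fibres. Suppose $\nu$ satisfies the two-sided Gibbs bound $C_{\delta,\eps}\leq\nu(B(x,n,\eps))e^{nP-S_n(x)}\leq K_\delta$. For $y\in B(x,n,\eps/2)$ we have $B(y,n,\eps/2)\subset B(x,n,\eps)$, and symmetrically. This gives $|S_n(y)-S_n(x)|\leq\log(K_\delta/C_{\delta,\eps/2})$, which is the full Bowen property. Concretely, \Cref{lem:particionsumas} and the measures $\mu^{\ms E}_n$ (built from Lebesgue measure on whole center plaques) use control \emph{along} center plaques, and the $\mc C$-fibres are center directions. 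So your last paragraph asserts a uniform constant that steps 1--3 do not produce.

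What steps 1--2 do prove is the case where $\mc C$ is finite. They also give the following: split $\R^N=\mathrm{Fix}(\mc C^0)\oplus\mathrm{Fix}(\mc C^0)^{\perp}$ and pass to the finite-index subgroup $\pi^{-1}(\mc C^0)$; then the Bowen property holds for the $\mathrm{Fix}(\mc C^0)$-component of $L$. The complementary component needs an idea that neither your proposal nor the paper's sketch supplies.
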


\begin{proof}
Since $M'$ is a manifold, the fundamental domain $\Sigma'$ can be chosen to be a polyhedron. If $\tilde p,\tilde q$ lie in the same Bowen ball in $M'$,
then we can find lifts $p,q\in G'$ that lie in nearby translates of $\Sigma'$, say $p\in \gamma_p\Sigma'$ and $q\in \gamma_q\Sigma'$ with $\gamma_p\Sigma'\cap\gamma_q\Sigma'\neq\emptyset$. Likewise, the end-points of the their $a^n$-orbit lie in nearby fundamental domains. It follows that 
\[
    a^{n}p=b a^n{q}c
\]
where $b,c$ belong to a finite set of $\Gamma$. The quasimorphism property of $L$ then yields the result.  
\end{proof}

Let $\nu$ be the associated equilibrium state. In particular, for every $b\in G$ commuting with $a$, $(T_b)_*\nu=\nu$. Define 
\[
 V \defeq \overline{\inner{A}{S}}^\circ < C_G(a)^{\circ}   
\]
Then $V$ is generated by split semisimple elements and $\Ad$-unipotent elements, therefore it is of $\Ad$-triangular type. Note that $V\acts (M',\nu)$ ergodically. 

Let $V_u$ be the smallest closed normal subgroup of $V$ containing all its $\Ad$-unipotent one-parameter subgroups. Since $S$ is generated by upper- and lower-triangular subgroups, $S \subset V_u$. By construction, the projection of $S$ to each irreducible block $G^{(i)}$ is the noncompact subgroup
$\iota_i(\SL(2,\mathbb R))$. Consequently, the projection of $V_u$ to at least one almost-simple factor of every irreducible block is
noncompact. 

\begin{remark}
    If we knew that $V_u\acts (M',\nu)$ ergodically, then we would be able to use Ratner's theorem \cite{Ratner_1991} and deduce that $\nu$ is the Haar. As ergodicity at this stage is not clear, we need some extra steps.  
\end{remark}

We use the following theorem due to Margulis--Tomanov \cite{Margulis_1996} and Mozes--Weiss \cite{Mozes_1998}, in the form stated as Theorem $3.1$ by Starkov \cite{Starkov_1999}: there exists a connected closed subgroup $U<G'$ so that $V_u<U$, for which almost every $V_u$-ergodic component of $\nu$ is the $U$-invariant probability measure on a closed $U$-orbit in $M'$. Moreover, the group
\[
    P=\{p\in G':g\mapsto pgp^{-1}\text{ preserves }U\text{ and its Haar measure}\}
\] 
contains $V$ (in particular, $S$), and there is a point $x\in M'$ so that its $P$-orbit is closed, and $\nu(xP)=1$. Since $\nu$ has full support, we deduce that $M'=xP$, and with no loss of generality we can assume that $P$ is connected. In particular $G=G\times\{e\} \subset P$, and $G$ normalizes $U$. 

Let $J=\mrm{pr}_G(U)$: then $J\triangleleft G$ is normal and closed, since $\mrm{pr}_G$ is closed. As 
\[
    S \subset V_u \subset U
\]
and $S$ is connected, its projection to $G$ is contained in $J^{o}$. Hence $J^{o}\triangleleft G$ is a connected normal subgroup whose projection to every irreducible block $G^{(i)}$ is noncompact. In particular, 
\[
    J_i=J^{o}\cap G^{(i)}
\]
is a nontrivial normal subproduct of $G^{(i)}$, containing at least one of its almost-simple. Since $\Gamma^{(i)}<G^{(i)}$ is irreducible
\[
    \overline{\Gamma^{(i)}J_i}=G^{(i)}.
\]
It follows that every $J^0$-orbit, and hence every $J$-orbit, in $M=\Gamma\setminus G$ is dense. On the other hand, if $xU \subset M'$ is one of the closed $U$-orbits, its projection to $M$ is a $J$-orbits. Since the fibers of $M'\to M$ are compact, the projection of the closed set $xU$ is closed. Thus, this $J$-orbit is both closed and dense, hence equal to $M$. Therefore, $\Gamma J=G$.

We claim that $J=G$. Otherwise, consider $q:G\to G/J$ the quotient map, and note that $q(\Gamma)=G/J$. The right side is connected and positive-dimensional, while $q(\Gamma)$ is countable, which gives a contradiction. 

\begin{claim}
$G\times\{e\}\subseteq U$. 
\end{claim}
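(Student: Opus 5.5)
We want to show $G\times\{e\}\subseteq U$, knowing that $U<G'=G\times\mc C$ is connected and closed, that it is normalized by $G\times\{e\}$ (since $G\times\{e\}\subset P$), and that $\mrm{pr}_G(U)=J=G$. The strategy is to look at Lie algebras and exploit the fact that $\lie g$ is semisimple while $\mc C$ is compact.

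\textbf{Reduction to Lie algebras.} Since $U$ is connected, it suffices to show $\lie g\oplus 0\subseteq \lie u$, where $\lie u\subseteq \lie g\oplus\lie c$ is the Lie algebra of $U$ and $\lie c=\mathrm{Lie}(\mc C)$. Indeed, once this holds, $U$ contains the subgroup generated by $\exp(\lie g\oplus 0)$, and this subgroup is $G\times\{e\}$ because $G$ is connected.

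\textbf{Invariance and projection.} Because $G\times\{e\}$ normalizes $U$, the subalgebra $\lie u$ is $\Ad(G\times\{e\})$-invariant. Differentiating, we get $[\lie g\oplus 0,\lie u]\subseteq\lie u$. Next, $\mrm{pr}_G(U)=G$ with $U$ connected gives $d\,\mrm{pr}_G(\lie u)=\lie g$. To justify this, note that $\mrm{pr}_G(U)$ is the connected Lie subgroup of $G$ with Lie algebra $d\,\mrm{pr}_G(\lie u)$. If $d\,\mrm{pr}_G(\lie u)$ were a proper subalgebra, the image would be a proper immersed subgroup, and being of strictly smaller dimension it could not equal all of $G$.

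\textbf{Bracket computation.} Take $Y\in\lie g$ and choose $Z\in\lie c$ with $(Y,Z)\in\lie u$. For any $X\in\lie g$ we compute
\[
[(X,0),(Y,Z)]=([X,Y],0)\in\lie u.
\]
Hence $[\lie g,\lie g]\oplus 0\subseteq\lie u$. Since $\lie g$ is semisimple, $[\lie g,\lie g]=\lie g$, and this gives $\lie g\oplus 0\subseteq \lie u$.

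\textbf{Main obstacle.} The only delicate point is the projection step. We must pass from the group-level equality $\mrm{pr}_G(U)=G$ to surjectivity at the Lie algebra level, using that the image of a connected Lie group under a continuous homomorphism is the immersed subgroup of the image Lie algebra. If needed, one can avoid this by working directly with the closed normal subgroup $J$ already identified, whose identity component has Lie algebra $d\,\mrm{pr}_G(\lie u)$. Everything else is formal.
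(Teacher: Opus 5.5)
Your proposal is correct and is essentially the paper's argument. Normalization by $G\times\{e\}$ gives $[\lie g\oplus 0,\lie u]\subseteq\lie u$, and surjectivity of the projection lets you lift each $Y\in\lie g$ to some $(Y,Z)\in\lie u$. The identity $[(X,0),(Y,Z)]=([X,Y],0)$ together with $[\lie g,\lie g]=\lie g$ then gives $\lie g\oplus 0\subseteq\lie u$, and connectedness of $G$ finishes. Your justification that $\mrm{pr}_G(U)=G$ forces $d\,\mrm{pr}_G(\lie u)=\lie g$ fills in a step the paper states without comment.
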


Indeed, let $\lie u=\operatorname{Lie}(U)$. Then, since $G\times\{e\}$ normalizes $U$ and $\mrm{pr}_G(U)=G$,
\begin{align*}
[\lie g\oplus\{0\},\lie u]\subseteq\lie u\\
\operatorname{pr}_{\lie g}(\lie u)=\lie g,
\end{align*}
while semisimplicity of $\lie g$ give
\[
    \lie g\oplus \{0\}=[\lie g, \lie g]\oplus\{0\}.
\]
Given $X,Y\in \lie g$ choose $Z\in\operatorname{Lie}(\mc C)$ so that $(Y,Z)\in\lie u$, and compute
\[
    ([X,Y],0)=[(X,0),(Y,Z)]\in \lie u.
\]
Connectedness of $G$ now gives $G\times\{e\}\subseteq U$.

Recall that $\Gamma'$ is the graph of $\pi$ and that $\overline{\pi(\Gamma)}=\mathcal C$. Therefore,
\[
\Gamma'(G\times\{e\})=G\times\pi(\Gamma),
\]
and hence
\[
G'=\overline{\Gamma'(G\times\{e\})}\subseteq\overline{\Gamma'U}.
\]
The $U$-orbit through $\Gamma'e$ is closed, so its preimage $\Gamma'U$ in $G'$ is closed. Thus $\Gamma'U=G'$, proving that $U$ acts transitively on $M'$.

Almost every $V_u$-ergodic component of $\nu$ is $U$-invariant, so $\nu$ itself is $U$-invariant. By uniqueness of the invariant probability measure on a transitive homogeneous space, we deduce that $\nu$ is the normalized Haar measure on $M'$.

\paragraph{From the equilibrium state to boundedness}Let
\[
I(a)=\lim_{n\to+\infty}\frac{1}{n}\int S(x,a^n)m(\dd x),
\]
where $m$ denotes Haar measure on $M'$. One checks directly that $I:A\to\R^N$ is a homomorphism. In particular,
\[
    I(a^t)=tI(a)
\]
along every one-parameter subgroup $a^t=\exp(t\log a)$. Fix $a=a_j$ one of the good singular elements as in \eqref{eq:generadorAgood}, and $\ell\in (\R^N)^*$. Consider the constant quasicocycle $J=(J_n)$ with $J_n(x)=\ell(I(a^n))=n\ell(I(a))$: its unique equilibrium state is $m$, and 
\[
    m(S^{a,\ell})=\lim_{n\to\oo} \frac{1}{n} \int \ell(S(x,a^n)) m(\dd x)=\ell(I(a))=m(J)  
\]
hence by \Cref{cor:sameEq} we deduce that
\[
    \sup_{n\geq 0}\norml{\ell\left(S(\cdot,a^n)-I(a^n)\right)}<\oo.
\]
Choosing a basis $\ell_1,\ldots, \ell_N$ of $(\R^N)^*$, we get
\[
	\sup_{n\geq 0}\norml{S(\cdot,a^n)-I(a^n)}<\oo.
\] 
Applying the quasicocycle property to the relation $a^na^{-n}=e$, we then deduce that
\[
    \sup_{n\in\Z}\norml{S(\cdot,a^n)-I(a^n)}<\oo.
\] 
Write $t=n+s$ where $n\in\Z, 0\leq s<1$. Then 
\begin{align*}
\abs{S(x,a^t)-I(a^t)}\leq \abs{S(x,a^n)-I(a^n)}+C_S+2\sup_{x\in\Sigma', 0\leq s<1}\abs{S(x,a^s)}.
\end{align*}

\begin{lemma}
    It holds $\sup_{x\in\Sigma', 0\leq s<1}\abs{S(x,a^s)}<\oo$.
\end{lemma}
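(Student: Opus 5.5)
The plan is to unwind the definition \eqref{eq:associatedquasicocycle}. For $x'=(x,A)\in\Sigma'$ and $0\leq s<1$ we have
\[
S\bigl((x,A),a^s\bigr)=A^{-1}L\bigl([xa^s]\bigr).
\]
Since $A$ is orthogonal, $\abs{S(x',a^s)}=\abs{L([xa^s])}$. It therefore suffices to show that the set
\[
F\defeq\{[xa^s]: x\in\Sigma,\ 0\leq s<1\}\subset\Gamma
\]
is finite. Granting this, the supremum is bounded by $\max_{\gamma\in F}\abs{L(\gamma)}<\oo$, with no use of the quasimorphism property at all.

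To prove finiteness, I will use that $\Gamma$ is uniform, so the fundamental domain $\Sigma$ can be taken relatively compact; this is implicit in the choice of a polyhedral fundamental domain in the proof of the Bowen property. Then
\[
\Sigma\cdot\{a^s:0\leq s\leq 1\}\subset \overline{\Sigma}\cdot\exp\bigl([0,1]H\bigr)\eqdef Q
\]
is relatively compact in $G$, because it is the image of a product of compacts under multiplication. By definition, $[g]=\gamma$ means $g\in\gamma\Sigma$. Hence every $\gamma\in F$ satisfies $\gamma\Sigma\cap Q\neq\emptyset$, that is, $\gamma\in Q\,\overline{\Sigma}^{-1}$, which is a compact subset of $G$. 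Since $\Gamma$ is discrete, its intersection with a compact set is finite, so $F$ is finite.

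The only point needing care is making sure $\Sigma$ is bounded, i.e.\ fixing from the start a relatively compact (measurable, strict) fundamental domain. This is always possible for a cocompact discrete subgroup, for instance by taking a Dirichlet domain for a left-invariant metric and removing overlaps on the boundary. If one prefers not to revisit that choice, an alternative is to use the quasicocycle identity to replace $s$ by a finite grid plus a small perturbation. I expect this to be unnecessary, and the compactness argument above to be the whole proof. The same bound holds for $s\in[-1,1]$ or any compact range of $t$, which is what the subsequent estimate needs.
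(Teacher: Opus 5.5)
Your proof is correct and is essentially the paper's argument. Both use orthogonality to get $|S(x,a^s)|=|L([xa^s])|$, and both note that the relevant $\gamma$ lie in the finite set $\{\gamma\in\Gamma:\gamma\Sigma'\cap\Sigma' K_a\neq\emptyset\}$, where $K_a=\{a^s:0\le s\le 1\}$; finiteness follows from relative compactness of the fundamental domain and discreteness of $\Gamma$. Working in $G$ rather than in $G'$ is harmless, since $[(g,u)]=([g],\pi([g]))$.
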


\begin{proof}
   Consider the compact set $K_a=\{a^s:0\leq s\leq 1\}$ and recall that for $x\in \Sigma', g\in G$ 
   \[
   \gamma=[xa^s]\Rightarrow xa^s\in \gamma\Sigma'
   \] 
   and therefore $\gamma \Sigma'\cap \Sigma'K_a\neq \emptyset$. It follows that
   \[
   \{[xk]:x\in\Sigma',k\in K_a\} \subset \{\gamma\in\Gamma:\gamma \Sigma'\cap \Sigma'K_a\neq \emptyset\},
   \]
   the later being a finite set, since $\overline{\Sigma'}$ is compact. Since $\abs{S(x,g)}=\abs{L([xg])}$, the claimed boundedness follows.
\end{proof}

Considering $a_1,\ldots, a_r$ that generate $A$ as in \eqref{eq:generadorAgood}, and using again the quasicocycle property, we deduce that
\[
	C_1:=\sup_{a\in A}\norml{S(\cdot,a)-I(a)}<\oo.
\]

We may choose the fundamental domain $\Sigma$ to be right $K$-invariant, i.e. $\Sigma K=\Sigma$. Indeed, let $q:G\to G/K$ be the quotient map and choose a fundamental domain $\overline{\Sigma} \subset G/K$, for the $\Gamma$-action, which with no loss of generality can be taken as a polyhedron. Let $\Sigma=q^{-1}(\overline{K})$: then $\Sigma K=\Sigma$. Note that $\mrm{Stab}_{\Gamma}(gK)=\Gamma\cap gKg^{-1}$ is trivial, since $\Gamma$ is without torsion and $K$ is compact. It follows that if we define 
\[
    [g]=\gamma\in\Gamma\text{ so that }\gamma^{-1}gK\in\overline\Sigma
\]
there is only ambiguity on the chosen elements for points $g$ landing in the finitely many boundary faces of $\overline K$. Selecting arbitrarily some of its faces, the return element is unique and the return map is well defined. It thus follows that $[gk]=[g]$ for every $g\in G, k\in K$, and in particular $[xk]=e$ for every $k\in K$ and $(x,z)\in\Sigma'$. The formula inducing the quasicocycle then gives
\[
S((x,z),(k,c))=z^{-1}L([xk])=z^{-1}L(e)
\]
hence
\[
   |S((x,z),(k,c))|=|L(e)|. 
\]

Next we identify $\Sigma'$ with $\Gamma'\backslash G'$ and use the right $G$ action on $\Gamma'\backslash G'$. Using the $G=KA^+K$ decomposition and the quasicocycle property, we see that there is a constant $C_0$ ($=2\|\delta_\pi(L)\|$) so that for $g=k_1ak$, $k_i\in K$, $a\in A^+$, $p=\Gamma'(x,z)\in\Gamma'\backslash G'$, 
\begin{align*}
\MoveEqLeft\Big|S\left(q,(k_1ak_2,e)\right)-S\left(pk_1a,z),(k_2,e)\right)-S(p(k_1,e),(a,e))-S\left(p,(k_1,e)\right)\Big|\leq C_0.
\end{align*}
Hence we get that 
\[
\left|S(p,(k_1ak_2,e))-S(p(k_1,e),(a,e))\right|\leq C_0+2|L(e)|	
\]
which in turn implies
\[
	\left|S(p,(g,e))-I(\kappa_A(g))\right|\leq C_0+2|L(e)|+C_1
\]
for every $g\in G, p=(x,z), x\in \Sigma, z\in \mc C$ (recall that $\kappa_A(g)\in A^+$ is the Cartan projection).

Using the explicit expression for $S$, we have
\[
	\left|z^{-1}L([xg])-I(\kappa_A(g))\right|\leq  C_0+2|L(e)|+C_1:=C_2
\]
for every $g\in G, x\in \Sigma$, $z\in \mc C$, and in particular,
\begin{align*}
\left|\left(z^{-1}-\Id\right)L([xg])\right|\leq 2C_2.
\end{align*}

 Fix $x_0\in \Sigma$. Given $\gamma\in \Gamma$, take $g=x_0^{-1}\gamma x_0$, and thus $[x_0g]=\gamma$. It follows that 
 \begin{align}\label{eq:zquasimorphism}
 \sup_{\gamma\in\Gamma, z\in\mc C}\left|\left(z^{-1}-\Id\right)L(\gamma)\right|\leq 2C_2.
\end{align}

This permits us to make some reduction. At the beginning we split $\R^N=E\oplus F$ with $E=\Fix{\mc C}, F=E^{\perp}$. Then $\pi$ acts trivially on $E$, whereas $\bigcap_{z\in\mc C}\ker (z-\Id)|F=\{0\}$.  Thus, by \eqref{eq:zquasimorphism}, on the $F$ part we have that $L$ is uniformly bounded, and we can focus on the $E$ part. It is no loss of generality then to assume then that $\pi$ is trivial, and by choosing a basis of $E^*$ and considering each resulting coordinate, we can also assume that $N=1$. Then we have
\begin{align*}
 \left|L([xg])-I(\kappa_A(g))\right|\leq C_2
\end{align*}
for every $g\in G$ and $x\in\Sigma$, which is equivalent to 
\[
    |S(p,a)-I(\kappa_A(g))|\leq C_2
\]
for every $a\in A$. Therefore
\[
    |I(a)-I(\kappa_A(g))|\leq C_1+C_2\leq 2C_2.
\]
Applying the same to $a^n$, since $\kappa_A(a^n)=n\kappa_A(a)$, we deduce that $I(a)=I(\kappa_A(a))$.

Recall that the Weyl Group is $W=N_K(A)/Z_K(A)$ where $N_K(A), Z_K(A)$ are the normalizer and centralizer of $A$ in $K$, respectively. $W$ acts on $A$ by conjugation, and for $a\in A$ we have $W\cdot a\cap A^+=\{\kappa_A(a)\}$. Thus $\kappa_A(w(a))=\kappa_A(a)$, and therefore
\[
    I(a)=I(\kappa_A(a))=I(\kappa_A(w(a)))=I(w(a)).
\]
We deduce that $aw(a)^{-1}\in\ker I$, for every $a\in A, w\in W$.

It is simpler to lift everything to $\lie a$ using the exponential map. Define
\[
\widetilde I:\lie a\longrightarrow\R,
\qquad
\widetilde I(X)=I(\exp X).
\]
The map $\widetilde I$ is additive and $W$-invariant. Recall that $W$ is generated by the reflections $s_\alpha$ across the
hyperplanes $\ker\alpha$, where $\alpha$ ranges over the simple restricted roots. Hence
\[
\widetilde I\bigl(X-s_\alpha(X)\bigr)=\widetilde I(X)-\widetilde I(s_\alpha(X))=0.
\]
Since
\[
\Im(\Id-s_\alpha)=(\ker\alpha)^\perp,
\]
the map $\widetilde I$ vanishes on $(\ker\alpha)^\perp$ for every
simple restricted root $\alpha$. Moreover,
\[
\sum_{\alpha\in\Delta}(\ker\alpha)^\perp
=\left(\bigcap_{\alpha\in\Delta}\ker\alpha\right)^\perp
=\mathfrak a,
\]
because the simple restricted roots span $\mathfrak a^*$. It follows
that $\widetilde I\equiv0$, and therefore $I\equiv0$.

We deduce that,
\[
|L([xg])|\leq C_2
\]
for every $x\in\Sigma$ and $g\in G$. Since every $\gamma\in\Gamma$ can be written as $[xg]$ for suitable $x\in\Sigma$ and $g\in G$, we
conclude that
\[
|L(\gamma)|\leq C_2
\qquad
(\gamma\in\Gamma).
\]
Thus $L$ is bounded.

\section*{Acknowledgments}

We thank Francesco Fournier-Facio for several helpful suggestions and comments, particularly concerning the references. OpenAI's Codex,
using the GPT-5.6 Sol model, was used during the preparation of this manuscript to improve the exposition and assist in checking the
arguments. The authors take full responsibility for all mathematical content.

P.D. Carrasco was partially supported by CNPq (Produtividade em Pesquisa) 305197/2025-8, Projeto Universal CNPq 401737/2025-0 and FAPEMIG APQ-02338-24. 

F. Rodriguez-Hertz was partially supported by NSF grant DMS-2453688.